\renewcommand{\leq}{\leqslant}
\renewcommand{\le}{\leqslant}
\renewcommand{\ge}{\geqslant}
\def\eps{\varepsilon}
\definecolor{mno}{rgb}{0.5,0.1,0.5}
\newcommand{\R}{\mathbb R}
\newcommand{\e}{\varepsilon}
\newcommand{\T}{\mathbb T}
\newcommand{\Pp}{\mathbb P}
\newcommand{\Ee}{\mathbb E}
\newcommand{\I}{\mathbf 1}
\newcommand{\w}{\omega}
\newcommand{\N}{\mathbb{N}}
\newcommand{\Z}{\mathbb Z}
\newcommand{\sL}{\mathcal{L}}
\newcommand{\E}{\mathscr{E}}
\newcommand{\F}{\mathscr{F}}
\newcommand{\LL}{\mathcal{L}}
\newtheorem{theorem}{Theorem}[section]
\newtheorem{lemma}[theorem]{Lemma}
\newtheorem{proposition}[theorem]{Proposition}
\theoremstyle{definition}
\newtheorem{remark}[theorem]{Remark}
\numberwithin{equation}{section}
\begin{document}

\title[Quantitative stochastic homogenization for stable-like jumps] {Quantitative stochastic homogenization
for random
conductance models with stable-like jumps}

\author{Xin Chen,\quad Zhen-Qing Chen,\quad Takashi Kumagai  \quad \hbox{and}\quad Jian Wang}

\date{}

\maketitle

\begin{abstract} We consider random conductance
models with long range jumps on $\Z^d$, where
the one-step transition probability from $x$ to $y$ is proportional to
$w_{x,y}|x-y|^{-d-\alpha}$ with $\alpha\in (0,2)$. Assume that $\{w_{x,y}\}_{(x,y)\in E}$ are independent, identically distributed and uniformly bounded
non-negative random variables with
 $\Ee w_{x,y}=1$, where $E$ is the set of all
unordered pairs on $\Z^d$. We obtain a quantitative version of stochastic homogenization for these random walks, with
explicit polynomial rates  up to logarithmic corrections.

\medskip

\noindent\textbf{Keywords:} stochastic homogenization;
random conductance model; long range jumps;
$\alpha$-stable-like process

\medskip

\noindent \textbf{MSC 2010:} 60G51; 60G52; 60J25; 60J75.
\end{abstract}
\allowdisplaybreaks

\section{Introduction and main result}\label{section1}

Some advances have been made recently on
stochastic homogenization for non-local operators with random coefficients; see \cite{BCKW, CCKW2, CKK, FH, FHS, PZ}. In particular, the qualitative homogenization results established in \cite{CCKW2} (adjusted to a discrete setting) can be roughly restated as follows. Consider the following
fractional Laplacian-like (or $\alpha$-stable-like)
random operator
$$
\sL^\w f(x)=
\sum_{y\in \Z^d: y\not=x}
\left(f(y)-f(x)\right)\frac{w_{x,y}(\w)}{|x-y|^{d+\alpha}},
$$
 where $\alpha\in (0,2)$,
$w_{x,y}(\w)$ is a non-negative measurable function defined on $\Z^d \times \Z^d \times \Omega$ so that $w_{x,y}(\w)=w_{y,x}(\w)$,  and $\w \in \Omega$
 describes a stationary and ergodic environment. For $\varepsilon>0$, let $\sL^{\e,\w}$ be the associated scaled operator
$$
\sL^{\e,\w} f(x)=\eps^d \sum_{y\in \e \Z^d: y\not=x}
 \left(f(y)-f(x)\right)\frac{w_{x/\e,y/\e}(\w)}{|x-y|^{d+\alpha}}.
$$
As a special case of the main results established in \cite{CCKW2},
we know that, under  some suitable conditions,  for
any $\lambda>0$ and $f\in C_c(\R^d)$, the unique solution
$u^{\w}_\e$ of
$$(\lambda -\sL^{\e,\w})
u^{\w}_\e=f$$
converges in $L^2$,
as $\e\to0$, to the solution
$\bar u$
of the deterministic equation
$$(\lambda -\bar \sL) \bar u=f,$$ where $\bar \sL$ is an $\alpha$-stable-like operator with deterministic coefficients.
The goal of this paper is to
investigate
the quantitative stochastic homogenization; namely,
to find the rate of
$L^2$-convergence of $u_\e$ to $\bar u$ for $f$ in a dense subset of $L^2(\R^d; dx)$.

Stochastic homogenization has been intensively studied for a long time,
mostly for differential operators and for finite range random walks  in random environments,
 since
the pioneering works of Kozlov \cite{Ko} and Papaicolaou and Varadhan \cite{PV}
for
divergence form differential operators
  with random coefficients. Much efforts have been
focused on qualitative results such as
proving the existence of a homogenized equation
that
 characterizes the limit. See \cite{ABDH, ACJS, ACS, B,Ku} and references therein.

The first quantitative
result
of stochastic homogenization for divergence form differential operators
was obtained by Yuinskii \cite{Yu}.
Recently, Gloria and Otto \cite{Go1, Go2} obtained the optimal variance estimate and error estimates  (in terms of the ratio of length scales) for the correctors in the discrete and uniformly
elliptic setting,
by combining
elliptic and  parabolic regularity theory and Green function estimates with a spectral gap inequality or a logarithmic Sobolev inequality for
vertical derivative of environments, which
are
used to quantify the ergodicity.
Based on these estimates, the quantification for this discrete model and the counterpart in the continuous configuration space
were
investigated by Gloria, Neukamm and Otto \cite{GNO} and Gloria and Otto \cite{Go3}, respectively.

The other point of view developed by Armstrong, Smart and Mourrat (see \cite{AKM1, AM, AS} or the book \cite{AKM}) is to establish the quantitative theory of homogenization by focusing on the monotonicity
and subadditivity
of certain energy quantities, which implements a progressive coarsening of the coefficient field and
captures the behavior of solutions on large length scales.
Besides  the monotonicity property
and subadditivity property, the large scale regularity for elliptic equation as well as
some bootstrap arguments were adopted to prove the optimal convergence rate for
elliptic homogenization; see e.g. \cite{AKM, AKM1, AKM2}.
We refer the reader to Armstrong, Bordas and Mourrat \cite{ABM} and Gu and Mourrat \cite{GM2}
for the quantitative results in the parabolic setting.

Armstrong and Dario \cite{AD} have
obtained the quantitative results for the homogenization problems for simple random walks on a supercritical Bernoulli percolation cluster with possibly degenerate conductances. Later the
quantification of  local central limit theorem for this model has been established by Dario and Gu \cite{DG}. See Giunti and Mourrat \cite{GM} for the quantification
in degenerate environments under
more general inverse moment conditions.

However, there is no result on the quantitative theory of homogenization for non-local $\alpha$-stable-like operators in the literature
(even for the deterministic periodic environments).
 There are significant differences
between the quantitative homogenization  theory for elliptic operators of  divergence form  and
that for symmetric
$\alpha$-stable-like
non-local operators.
The main
reason
is that,  due to the
heavy tail of the jumping kernels,
 the method
 via a global corrector for the corresponding elliptic
 equation,  which is necessary for the results of nearest neighbor models and elliptic homogenization quoted above,
  does not work for non-local $\alpha$-stable-like operators.
Moreover, because of
the long range interactions,
neither
functional inequalities for the vertical derivative of environments nor
the monotonicity of the energy functionals mentioned above can be applied to long range conductance models.

\medskip

The aim of this paper is to establish quantitative results for homogenization problems of non-local $\alpha$-stable-like operators.
There are two main novel
 ingredients of our approach.
One is the multi-scale Poincar\'e inequality for non-local Dirichlet forms with
possibly
degenerate coefficients (see Proposition \ref{l2-2}), and the other one is the construction and the local $H^{\alpha/2}$-estimates of correctors via local Poisson equations \eqref{e2-1} and \eqref{e2-1-2--} (instead of global correctors for nearest neighbor models and elliptic homogenization as stated above). To the best of
the authors' knowledge, this is the first paper on
quantitative results for homogenization of non-local operators.
Below we describe the setting and the main result of this paper.

Throughout the paper, we always suppose that the following assumption holds.

\medskip

\noindent{{\bf Assumption (H1)}}: {\it Let $E=\{(x, y):x,y\in \Z^d\}$ be the collection of all unordered
distinct pairs
on $\Z^d$.
Suppose that $\{w_{x,y}: (x,y)\in E\}$ is a sequence of i.i.d. non-negative random variables such that the following
hold{\rm:}
\begin{itemize}
\item [(i)] $w_{x,y}=w_{y,x}$ for
every $x\not= y\in \Z^d$;

\medskip
\item [(ii)] $\Ee[w_{x,y}]=1$ for
every $x\not= y\in \Z^d$;

\medskip
\item [(iii)] There exists a  constant $C_1>0$ such that
$w_{x,y}\le C_1$ for all $x\not= y\in \Z^d$.
\end{itemize}}

\medskip

Note that under {\bf Assumption (H1)},  the random
variables
$w_{x,y}$ may
degenerate.

\medskip

Let $\mu$ be the counting measure on $\Z^d$. For $\omega\in \Omega$,
let $(\mathscr{E}^\w, \mathscr{F}^\w)$ be the following Dirichlet form on $L^2(\Z^d;\mu)$:
\begin{align*}
\mathscr{E}^\w(f,f)=&\frac{1}{2}
\sum_{x,  y\in \Z^d: x\not= y}
\left(f(x)-f(y)\right)^2\frac{w_{x,y}(\w)}{|x-y|^{d+\alpha}},\\
\mathscr{F}^\w=&\{f\in L^2(\Z^d;\mu): \mathscr{E}^\w (f,f)<\infty\},
\end{align*}
where $\alpha\in (0,2)$. Note that, under {\bf Assumption (H1)}(iii), it is easy to see $\mathscr{F}^\w= L^2(\Z^d;\mu)$ for
every $\omega\in \Omega$. Furthermore, it follows from the proof of \cite[Theorem 3.2]{CKK} that the Dirichlet form $(\mathscr{E}^\w, \mathscr{F}^\w)$ is regular with the core $B_c(\Z^d)$, the set of
functions on $\Z^d$ with compact support.
Let $(X_t^\omega)_{t\ge 0}$ be
the continuous time $\Z^d$-valued symmetric Hunt   process
  associated with the regular Dirichlet form $(\mathscr{E}^\w, \mathscr{F}^\w)$
  on $L^2(\Z^d; \mu)$.  The
  infinitesimal generator $\sL^\w$ associated with
$(\mathscr{E}^\w, \mathscr{F}^\w)$ is given by
$$
\sL^\w f(x)=
 \sum_{y\in \Z^d:y\not = x}
\left(f(y)-f(x)\right)\frac{w_{x,y}(\w)}{|x-y|^{d+\alpha}},\quad f\in L^2(\Z^d;\mu).
$$
For any $k\in \N_+:=\{1,2,\cdots\}$, let $\mu^k$ be the normalized counting measure on $k^{-1}\Z^d$ so that
$\mu^k ((0, 1]^d)=1$ i.e., for any $A\subset k^{-1}\Z^d$, $\mu^k(A):=
k^{-d}\sum_{x\in k^{-1} \Z^d}1_A(x)$.
Consider the following
scaled operator $\sL^{k,\w}$ of the operator $\sL^{\w}$:
\begin{equation}\label{e:operator}
\sL^{k,\w} f(x):=k^{-d}
  \sum_{y\in k^{-1}\Z^d: y\not= x}
\left(f(y)-f(x)\right)\frac{w_{kx,ky}(\w)}{|x-y|^{d+\alpha}},\quad x\in k^{-1}\Z^d,\
f\in L^2(k^{-1}\Z^d;\mu^k).
\end{equation}
We sometimes
drop
$\w$ and simply write $\sL^{k}$ for $\sL^{k,\w}$.
It is standard to check that $\sL^{k,\w} $ is the infinitesimal operator for the $k^{-1}\Z^d$-valued process
$\{X_t^{k\,\w}\}_{t\ge 0}:=\{k^{-1}
X_{k^\alpha t}^{\w}\}_{t\ge 0}$,
and that for any
$f, g \in L^2(k^{-1}\Z^d;\mu^k)$,
\begin{equation}\label{e1-1a}
-\int_{k^{-1}\Z^d}f(x)\sL^{k,\w}g(x)\,\mu^k(dx)=\frac{1}{2}
\int_{k^{-1}\Z^d}\int_{k^{-1}\Z^d}
(f(x)-f(y)) (g(x)- g(y))\frac{w_{kx,ky}(\w)}{|x-y|^{d+\alpha}}\,\mu^k(dx)\,\mu^k(dy).
\end{equation}

We will  estimate the speed of
the $L^2$-convergence of the $\lambda$-resolvent
  of $ \sL^{k,\w}$ as $k\to \infty$ to that
of the limit
homogenized generator  $\bar \sL$, defined by
\begin{eqnarray}
\bar \sL f(x)&:=&\lim_{\varepsilon \to 0}\int_{\{|y-x|\ge \varepsilon\}} \left(f(y)-f(x)\right)\frac{1}{|x-y|^{d+\alpha}}\,dy
\quad \hbox{for } f\in C^2_c (\R^d)\nonumber\\
&=&\int_{\R^d} \left(f(x+z)-f(x)-\nabla f(x)\cdot z \I_{\{|z|\le 1\}}\right)\frac{1}{|x-y|^{d+\alpha}}\,dy, \\
&=&\begin{cases}  \displaystyle\int_{\R^d} \left(f(x+z)-f(x) \right)\frac{1}{|x-y|^{d+\alpha}}\,dy,\quad &\hbox{when }\alpha\in (0,1),\\
\displaystyle\int_{\R^d} \left(f(x+z)-f(x)-\nabla f(x)\cdot z \I_{\{|z|\le 1\}}\right)\frac{1}{|x-y|^{d+\alpha}}\,dy,\quad
&\hbox{when } \alpha = 1, \\
\displaystyle\int_{\R^d} \left(f(x+z)-f(x)-\nabla f(x)\cdot z \right)\frac{1}{|x-y|^{d+\alpha}}\,dy,\quad
&\hbox{when } \alpha\in (1,2).
\end{cases}\label{e:1.2}
\end{eqnarray}

Note that $\bar \sL$ is a constant multiple of the classical fractional Laplacian $\Delta^{\alpha/2}:= - (-\Delta)^{\alpha/2}$.
For $\lambda >0$, let
$$
\mathscr{S}_0^\lambda:= \left\{f:  f= (\lambda - \bar \sL) g \hbox{ for some } g\in   C_c^\infty(\R^d) \right\}.
$$
In other words, if we denote the $\lambda$-resolvent operator for the fractional Laplacian $\bar \sL$
by $\bar R_\lambda$, then
$$
\mathscr{S}_0^\lambda =\{f:  \bar R_\lambda f \in C_c^\infty (\R^d)\}.
$$
It is easy to see that
$\mathscr{S}_0^\lambda \subset C_b(\R^d)\cap L^2(\R^d;dx)\cap L^2(k^{-1}\Z^d;\mu^k)$ for every $k\ge 1$.
As explained at the beginning of \cite[Section 3]{PZ}, $\mathscr{S}_0^\lambda$ is dense in $L^2(\R^d;dx)$ (see
the appendix for
details)
and has been frequently used
in the study of
the homogenization problem in
general ergodic random media.

\medskip

 For any $f\in \mathscr{S}_0^\lambda$, let
 $u_k^{\w}\in L^2(k^{-1}\Z^d;\mu^k)$
be the unique weak
solution of the following scaled resolvent equation
\begin{equation}\label{e3-2}
(\lambda -\sL^{k,\w }) u^{\w}_k(x)=f(x)
\quad \hbox{for } x\in k^{-1}\Z^d.
\end{equation}
That is, $u^{\w}_k$ is the $\lambda$-resolvent of $\sL^{k,\w }$ for the function $f$ in $L^2$-space, which we denote
 by $R_\lambda^{k, \w}f$.
With a little abuse of notation, we
extend $u^{\w}_k$ to $u^{\w}_k: \R^d \to \R$ by setting $u^{\w}_k(x)=u^{\w}_k(z)$ if $x\in \prod_{1\le i\le d}(z_i,z_i+k^{-1}]$ for
some unique $z:=(z_1,z_2,\cdots, z_d)\in k^{-1}\Z^d$.

\medskip
The following is the main result of this paper.

\begin{theorem}\label{T:main}
Assume that {\bf Assumption (H1)} holds and $d>\alpha$.
Let $\lambda >0$ and $\gamma >0$. Then for any $f\in \mathscr{S}_0^\lambda$,
there are a constant $C_0>0$
{\rm (}which depends on $f$, $\gamma$, $\alpha$ and $\lambda${\rm )}
and a random variable
$k_0(\w)\ge1$ {\rm (}which depends on $\gamma$, $\alpha$ and $\lambda$ but is independent of $f${\rm )}
such that for all $k>k_0(\w)$,
\begin{equation}\label{t3-1-1-1}
\|
R_\lambda^{k, \w}f - \bar R_\lambda f
\|_{L^2(\R^d;dx)} \le C_0
\begin{cases} \min\left\{k^{-\alpha/2}
\log^{\frac{\alpha(1+\gamma)}{(4(d-\alpha))\wedge(2d)}}k,
\, k^{- ((1-\alpha)\wedge (d/2))}
\log^{1/2}k\right\},&\quad \alpha\in (0,1),\\
k^{-1/2}
\log^{1+\frac{1+\gamma}{(4(d-1))\wedge (2d)}}k,
&\quad \alpha=1,\\
k^{-(2-\alpha )/2}
 \log^{\frac{\alpha(1+\gamma)}{(4(d-\alpha))\wedge (2d)}} k,
 &\quad \alpha\in (1,2).\end{cases}
\end{equation}
\end{theorem}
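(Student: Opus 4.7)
The plan is to carry out a quantitative two-scale expansion based on \emph{localized} rather than global correctors, since the heavy-tail interactions preclude a stationary sublinear global corrector. Write $\bar u := \bar R_\lambda f \in C_c^\infty(\R^d)$ and $v_k^\w := u_k^\w - \bar u$, so that
\begin{equation*}
(\lambda - \sL^{k,\w}) v_k^\w = (\sL^{k,\w} - \bar \sL)\bar u \qquad \text{on } k^{-1}\Z^d.
\end{equation*}
Testing against $v_k^\w$ and using the non-negativity of the scaled Dirichlet form in \eqref{e1-1a} yields
\begin{equation*}
\lambda \|v_k^\w\|^2_{L^2(\mu^k)} + \mathscr{E}^{k,\w}(v_k^\w, v_k^\w) \le \bigl| \langle (\sL^{k,\w} - \bar\sL)\bar u,\, v_k^\w \rangle_{\mu^k} \bigr|,
\end{equation*}
so the problem reduces to bounding the right-hand side in a dual $H^{\alpha/2}$-type sense while leveraging the extra coercivity $\mathscr{E}^{k,\w}(v_k^\w,v_k^\w)$ on the left.

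For each dyadic scale $R$ with $k^{-1}\ll R\ll 1$, I would construct a local corrector $\phi_R^\w$ by solving the local Poisson equation \eqref{e2-1} or \eqref{e2-1-2--} on a cube of side $R$, driven by the appropriate Taylor expansion of $\bar u$ (whose order is dictated by the case split in \eqref{e:1.2}). A paraproduct-type coarse-graining of $\bar u$ across these dyadic scales then decomposes the residual $(\sL^{k,\w} - \bar\sL)\bar u$ into a sum of localized pieces, each controlled in $H^{-\alpha/2}$ via the local $H^{\alpha/2}$-estimates on $\phi_R^\w$ (the second main ingredient advertised in the introduction). Dualising this decomposition against $v_k^\w$ through the multi-scale Poincar\'e inequality of Proposition \ref{l2-2} converts $\|v_k^\w\|_{L^2}$ into a weighted sum over scales of localized spatial averages while absorbing $\mathscr{E}^{k,\w}(v_k^\w,v_k^\w)$ into the left-hand side. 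The i.i.d.\ structure and boundedness in Assumption (H1) give Hoeffding-type concentration for each local average of $\phi_R^\w$ at scale $(Rk)^{-d/2}$, and a Borel--Cantelli argument over dyadic scales and integer $k$ produces the deterministic threshold $k_0(\w)$; the logarithmic factor $\log^{(1+\gamma)/\cdots} k$ in \eqref{t3-1-1-1} pays for the summability of the Borel--Cantelli series.

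Balancing the truncation error (of size $R_0^{\alpha/2}$ or $R_0^{(2-\alpha)/2}$, depending on the case) against the fluctuation error $(R_0 k)^{-d/2}$ and optimising $R_0$ in $k$ then produces the main rates $k^{-\alpha/2}$ and $k^{-(2-\alpha)/2}$ of \eqref{t3-1-1-1}. When $\alpha<1$, a cruder alternative estimates $\|(\sL^{k,\w}-\bar\sL)\bar u\|_{L^2}$ directly by $k^{-(1-\alpha)}$ via a Riemann-sum comparison of the two kernels and the smoothness $\bar u\in C_c^\infty$, yielding the second option $k^{-((1-\alpha)\wedge(d/2))}\log^{1/2}k$ (the $d/2$ cap reflecting the CLT-type fluctuation rate of the random kernel average); the critical case $\alpha=1$ is responsible for the additional logarithm in the second row of \eqref{t3-1-1-1}. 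The principal obstacle is the two-scale construction itself: because $\sL^{k,\w}$ is non-local there is no Leibniz rule with which to commute the operator past an amplitude modulating the corrector, so the paraproduct-type localisation must be delicately cut off in physical space, and the heavy-tailed kernel $|x-y|^{-d-\alpha}$ creates cross-scale near-/far-field interactions that must be absorbed using the smoothness of $\bar u$. Turning the error of this localisation into a polynomial rate in $k$ — rather than the qualitative $o(1)$ convergence obtained in \cite{CCKW2} — is where the real work lies.
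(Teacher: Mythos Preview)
Your proposal assembles the right ingredients (local corrector, multi-scale Poincar\'e, concentration plus Borel--Cantelli) but deploys them in a different architecture from the paper's, and the architecture you sketch has a genuine gap.

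The paper does \emph{not} build a family of correctors at many dyadic scales $R$, nor does it use any paraproduct decomposition of $\bar u$. It uses a \emph{single} corrector $\phi_{m+2}$ at the full scale $2^{m+2}\sim k$ and performs the classical two-scale ansatz
\[
v_k(x)=\bar u(x)+k^{-1}\langle\nabla\bar u(x),\phi_{m+2}(kx)\rangle,
\]
then tests the equation against $u_k-v_k$, not $u_k-\bar u$. Computing $\sL^k v_k$ produces four terms $I_1$--$I_4$; the crucial point is that $I_1$ contains the drift $k^{\alpha-1}\langle\nabla\bar u,V(kx)\rangle$ while $I_3$, via the Poisson equation \eqref{e2-1}, produces exactly $-k^{\alpha-1}\langle\nabla\bar u,V(kx)\rangle$, so the bad $O(k^{\alpha-1})$ term cancels. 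The remaining pieces are then bounded using the corrector's energy and $L^2$ estimates. The multi-scale Poincar\'e inequality is used \emph{only inside} the proof of the energy bound \eqref{p2-1-2} for $\phi_m$ (Proposition~\ref{p2-1}), where one takes $f=V-\oint V$, $g=\phi_m$; it is never applied to dualise $(\sL^{k,\w}-\bar\sL)\bar u$ against the solution error.

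Your plan tests with $v_k^\w=u_k^\w-\bar u$ directly. For $\alpha\in(1,2)$ the residual $(\sL^{k,\w}-\bar\sL)\bar u$ then contains the drift $k^{\alpha-1}\langle\nabla\bar u(x),V(kx)\rangle$, which is $O(k^{\alpha-1})$ in $L^2$ and does not vanish; you need a mechanism to kill it. You gesture at ``local $H^{\alpha/2}$-estimates on $\phi_R^\w$'' and a paraproduct localisation, but you never explain how the corrector enters the pairing $\langle(\sL^{k,\w}-\bar\sL)\bar u,v_k^\w\rangle$ to effect this cancellation---in the paper it enters by modifying the test function, which your setup does not do. Moreover, your scaling heuristic (balance $R_0^{(2-\alpha)/2}$ against $(R_0k)^{-d/2}$ and optimise $R_0$) does not match how the rate arises: in the paper $k^{-(2-\alpha)/2}$ comes directly from $k^{-2}\oint_{B_k}|\phi_{m+2}|^2\lesssim k^{-2}\cdot k^{\alpha}$, with no free scale to optimise. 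The direct $\alpha<1$ argument you describe is essentially what the paper does for that regime (Proposition~\ref{l3-1-1-1} plus \eqref{l3-1-2}).
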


\begin{remark}\label{R:1.2}\begin{itemize}
\item[(1)]
Since $R_\lambda^{k, \w}$  and $\bar R_\lambda$ are bounded operators on $L^2(\R^d; dx)$ with operator norms no larger than $1/\lambda$ and
$\mathscr{S}_0^\lambda$ is dense in $L^2(\R^d;dx)$, Theorem \ref{T:main} in particular implies the strong
resolvent convergence of $R_\lambda^{k, \w}$ to $\bar R_\lambda$  in $L^2$-sense, that is,
that for each $f\in L^2(\R^d; dx)$, $R_\lambda^{k, \w}f $ converges to $\bar R_\lambda f$ in $L^2(\R^d; dx)$.
The latter property is equivalent to the Mosco convergence of the
Dirichlet form associated with the scaled operator $\sL^{k,\w}$ to that of $\bar \sL$.
Hence Theorem \ref{T:main}  not only recovers the main result of \cite[Section 7]{CKK} on Mosco convergence of Dirichlet forms in random environment but also provies a rate of convergence for functions in a subspace  $\mathscr{S}_0^\lambda$ of $L^2(\R^d; dx)$.

\smallskip

\item[(2)]
 In the present setting, when $\alpha\in[1,2)$,
our rate of convergence is of order $(2-\alpha)/2$.
In  the deterministic periodic media case as studied in \cite{CCKW-new} or
 in the case of an i.i.d. sequence of random variables in the domain of attraction of a stable random variable
 studied in \cite{Ban} (see also \cite{CNX,X} for related topics),
the rate of convergence is of order $2-\alpha$.
 Indeed, in the periodic setting, we can construct a global corrector
$\phi\in C_b^1(\T^d)$
 so that
$\|\phi\|_\infty+\|\nabla \phi\|_\infty<\infty$;
while for
stable-like random walks on a mutually independent random conductance model, it seems
 there does not exist
such global corrector, due to the lack of $L^2$-integrability for the associated jumping kernel.
According to this significant distinction, it is quite plausible that for
stable-like  long range  random walks,  the convergence rate for the random conductance models
with mutually independent conductances is different from that of deterministic periodic
conductance models. More precisely,
when $\alpha\in (1,2)$, the convergence rate of
order
$k^{-(2-\alpha )/2}$ seems to be
reasonable in the sense that, the $H^{-\alpha/2}$ norm of the truncated potential $V_k^{\w}(x):=\sum_{z\in \Z^d:|z|\le k}{w_{x,x+z}(\w)}z{|z|^{-d-\alpha}}$ satisfies
$$\|V_k^\w\|_{H^{-\alpha/2}}^2\simeq
\sum_{z\in \Z^d:|z|\le k}{|z|^2}{|z|^{-d-\alpha}}\simeq k^{2-\alpha}.
$$
Intuitively, this blow up rate of $\|V^\w _k\|_{H^{-\alpha/2}}$
would slow down
the convergence rate from $k^{-(2-\alpha )}$
into $k^{-(2-\alpha )/2}$  with some logarithm corrections.
Note that for the nearest neighbor random walks (that is, for the deterministic constant conductance model)
 the limit of $\|V^\w _k\|_{H^{-\alpha/2}}$ exists as a finite number.
This property makes
the difference between  the  periodic setting and the mutually independent random conductance model.
We point out  that, when $\alpha\in (0,1)$,   we can obtain the convergence rate for the long range random conductance model
without using the corrector method; see the proof of Theorem \ref{T:main}.
We note that our convergence rate for $\alpha$
near $0$ and $2$,
modulo the logarithmic correction term,
 is half the distance of $\alpha$ to the boundary of $(0, 2)$.

\smallskip

\item [(3)]
We emphasize that
in the course of proving Theorem \ref{T:main},
a
Poincar\'e inequality for long range conductance models is obtained under
a very mild non-degenerate condition that
 $\Pp(w_{x,y}>0)>0$ for all $x,y\in \Z^d$; see Proposition \ref{l2-1} below.
 In particular, Proposition \ref{l2-1} improves  \cite[Lemma 2.2]{CKW21}, which is one of
the crucial ingredients
 for the quenched invariance principle for
 long range random conductance models
 in \cite{CKW21} and the associated heat kernel estimates in \cite{CKW20}.
  We point out that this minimal non-degenerate condition is much weaker than
	that
	is needed for
  nearest neighbor random conductance models.
 For example,
for invariance principle of random walks on supercritical percolation clusters, it requires that
 $\Pp(w_{x,y}>0)>p_c(d)$ for all $x,y\in \Z^d$ with $|x-y|=1$, where
  $p_c(d)\in (0,1)$
    is the critical bond percolation probability for $\Z^d$, that is, $p_c(d)$ is the infimum of  probability $p>0$ so that after removing each edge in $\Z^d$ independently with
  probability $1-p$,  there exists an  infinite connected  cluster almost sure; see,  e.g. \cite{Bar}.

\smallskip

\item [(4)]
Let us
mention some crucial steps
  to establish the convergence rate for $\alpha\in (1,2)$.
As explained before, one key ingredient in this paper is the energy estimate \eqref{p2-1-2} for the local
corrector $\phi_m$, which is defined by the solution
 of
 the Poisson equation \eqref{e2-1}. Combining this with the local Poincar\'e inequality, we can further obtain
the $L^2$-estimate \eqref{t3-1-2} for the local corrector $\phi_m$.
Note that, according to \eqref{e2-1}, we have the following expression for the local corrector $\phi_m$:
\begin{align*}
\phi_m(x)=-\sum_{z\in \Z^d\cap B_{2^m}}G_{B_{2^m}}(x,z)\left(V_m(z)-\oint_{B_{2^m}}V_m\,d\mu\right),\quad x\in B_{2^m}\cap \Z^d,
\end{align*}
where $G_{B_{2^m}}(\cdot,\cdot): B_{2^m}\cap \Z^d \times B_{2^m}\cap \Z^d \to \R_+$ denotes the Green function associated with the operator
$\LL_{B_{2^m}}^\w$ defined by \eqref{e1-1a}. When the conductance $\{w_{x,y}: (x,y)\in E\}$ is uniformly elliptic, it is easily seen from heat kernel estimates of symmetric stable-like processes (e.g., see \cite{CK08}) that
$$\sup_{x\in \Z^d\cap B_{2^m}}\sum_{z\in \Z^d\cap B_{2^m}}G_{B_{2^m}}(x,z)\le c_02^{m\alpha },$$
which, along with the expression for $\phi_m$ above, yields that
\begin{align*}
\oint_{B_{2^m}}|\phi_m(x)|^2\,\mu(dx)\le \|\phi_m\|_\infty^2 \le 4\|V_m\|_\infty^2\left(\sup_{x\in \Z^d\cap B_{2^m}}\sum_{z\in \Z^d\cap B_{2^m}}G_{B_{2^m}}(x,z)\right)^2
\le c_12^{2m\alpha}.
\end{align*}
Such estimate
is weaker than the desired assertion \eqref{t3-1-2}, and is not enough
even to show
$$\lim_{k \to \infty}\|u_k^\e-\bar u\|_{L^2(\R^d;dx)}=0.
 $$
This illustrates the crucial role played by the energy estimate \eqref{p2-1-2} in our investigation of quantitative homogenization,
which takes into account the quantification of ergodicity for long range random conductance models.

\smallskip

 \item [(5)]
 Our approach for the proof of
 Theorem \ref{T:main} works even when the uniformly bounded condition of $w_{x,y}$ in  {\bf Assumption (H1)}(iii) is replaced with some moment conditions on $w_{x,y}$. In this case, the convergence rates for $\|u^{\w}_k-\bar u\|_{L^2(\R^d;dx)}$ are still of polynomial decay but with worse
exponents. Similarly, when $w_{x,y}$ is uniformly bounded from below by a positive constant, the convergence rates for $\|u^{\w}_k-\bar u\|_{L^2(\R^d;dx)}$ can be improved a little bit by replacing
the
exponent
${\frac{\alpha(1+\gamma)}{(4(d-\alpha))\wedge(2d)}} $ in the logarithmic
correction term
by $ {\frac{\alpha(1+\gamma)}{4(d-\alpha)}}$,
see e.g. Remark \ref{e:order} below.

\smallskip

\item [(6)] For simplicity in the present paper
we only consider
the resolvent equations on the whole Euclidean space.
As investigated in \cite{ABC,RS,ZZ}, boundary regularity for the solution of the Dirichlet problem associated with
stable-like operators is totally different from that
of
elliptic second order differential operators.
We will
study the
quantitative
homogenization for solutions of the Dirichlet problem associated with stable-like operators
in deterministic periodic medium and in space-time random ergodic medium
in two separate papers \cite{CCKW-new,CCKW-new1}.
\end{itemize}

\end{remark}

The rest of this paper is
organized
as follows. In Section \ref{section2}, we establish
a Poincar\'e inequality and a multi-scale Poincar\'e inequality
for the Dirichlet form $(\mathscr{E}^\w, \mathscr{F}^\w)$.
In Section \ref{section2-}, we consider local energy estimates (i.e., $H^{\alpha/2}$-bounds) for
solutions of localized Poisson equations associated with the operator $\sL^{k,\w}$.
 In Section \ref{section3} we present uniform $L^2$-estimates for the differences between scaled operators $\sL^{k,\w}$, as well as their variants, and the limit operator $\bar \sL$. The last section is devoted to the proof of Theorem \ref{T:main}.

\medskip

\noindent{\bf Notation} Throughout this paper,
we use := as a way of definition.  For
 each $R>0$ and $x\in \R^d$, set $B_R(x):=x+(-R,R]^d$, and
with a little abuse of notation but it should be clear from the context,
we also use $B_R(x)$ to denote
$B_R(x) \cap \Z^d$ or $B_R(x) \cap  (k^{-1}\Z^d)$
for $k\ge 1$. For simplicity, we write $B_R=B_R(0)$.
For any $k\in \N$, let $\mu^k$ be the normalized counting measure on
$k^{-1}\Z^d$  so that $\mu^k ((0, 1]^d)=1$.
For $f: k^{-1}\Z^d \to \R$, set \begin{equation}\label{e:cou-k}\mu^k(f):=\int_{k^{-1}\Z^d} f(x)\,\mu^k(dx)=k^{-d}\sum_{x\in k^{-1}\Z^d} f(x).\end{equation}
For any subset $U\subset \Z^d$ and any
$\R^d$-valued
function $f: U \to \R^n$ with some $n\ge 1$, define
\begin{equation}\label{e1-1a}
\oint_U f\,d\mu=\oint_U f(x)\mu(dx)
:=\frac{1}{\mu(U)}\sum_{x\in U} f(x),     \quad
\sL_U^{\w} f(x):=\sum_{y\in U: y\neq x}\left(f(y)-f(x)\right)\frac{w_{x,y}(\w)}{|x-y|^{d+\alpha}}
\   \hbox{ for } x\in U,
\end{equation}
and
$$
\mathscr{E}_U^{\w}(f,f):=\sum_{i=1}^n
\E^\w_{U}(f^{(i)},f^{(i)}):=\frac{1}{2}\sum_{i=1}^n\sum_{x,y\in U: x\neq y}(f^{(i)}(x)-f^{(i)}(y))^2\frac{w_{x,y}(\w)}{|x-y|^{d+\alpha}}.
$$ It is not difficult to verify that for every finite subset $U\subset \Z^d$ and $f:U \to \R^n$,
\begin{equation}\label{e1-3a}
-\int_{U}\left\langle \LL_U^\w f(x), f(x)\right\rangle\mu(dx)=-\mathscr{E}_U^{\w}(f,f),
\end{equation}
where $\left\langle\cdot, \cdot \right\rangle$ is the inner product on $\R^d$.
In the rest of this paper, for simplicity of notation sometime we will omit the parameter
$\w$
when there is no danger of confusion.

\section{Preliminaries}\label{section2}

In this section,
we
establish two Poincar\'e-type inequalities,
a local Poincar\'e inequality and a  multi-scale Poincar\'e inequality,
for the Dirichlet form $(\E^{\w}, \F^{\w})$ under {\bf Assumption (H1)}, where the conductance may be degenerate.
First, we need a lemma.
For any $x_1,x_2,y\in \Z^d$ and $\delta, R>0$, let
$$B_{R,\delta}^{x_1,x_2}(y):
 =\{z\in B_R(y)\setminus \{x_1, x_2\} : w_{x_1,z}>\delta \hbox{ and } w_{x_2,z}>\delta \}.$$

\begin{lemma}\label{l2-0}Under {\bf Assumption (H1)}, there exist constants $\kappa,\delta_0\in (0,1]$  such that, for every $\theta>1/d$, there is a random variable $R_0(\w)\ge1$ so that for all $R>R_0(\w)$,
\begin{equation}\label{l2-0-1}
\mu(B_{r,\delta_0}^{x_1,x_2}(y))\ge \kappa\mu(B_r(y))
\quad \hbox{for } x_1,x_2,y\in B_{R}\hbox{ with } x_1\neq x_2  \hbox{ and } r\in [  \log^\theta R,    R ] .
\end{equation}
\end{lemma}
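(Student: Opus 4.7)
The plan is to reduce the statement to a standard concentration inequality for sums of i.i.d.\ Bernoulli variables, followed by a Borel--Cantelli argument. Since $w_{x,y}\ge 0$ and $\Ee[w_{x,y}]=1$, we may fix $\delta_0\in(0,1]$ small enough that $p:=\Pp(w_{x,y}>\delta_0)>0$. For distinct $x_1,x_2\in\Z^d$ and $z\in\Z^d\setminus\{x_1,x_2\}$, the unordered pairs $\{x_1,z\}$ and $\{x_2,z\}$ are different, so by {\bf Assumption (H1)} the variables $w_{x_1,z}$ and $w_{x_2,z}$ are independent, and therefore $\Pp(w_{x_1,z}>\delta_0,\,w_{x_2,z}>\delta_0)=p^2$. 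Moreover, as $z$ ranges over $B_r(y)\setminus\{x_1,x_2\}$, all the pairs $\{x_1,z\}$, $\{x_2,z\}$ involved are distinct, so the indicators $\I_{\{w_{x_1,z}>\delta_0,\,w_{x_2,z}>\delta_0\}}$ are mutually independent.

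Hence $\mu(B_{r,\delta_0}^{x_1,x_2}(y))$ is a sum of i.i.d.\ Bernoulli$(p^2)$ random variables with mean at least $p^2(\mu(B_r(y))-2)$. A Chernoff (or Hoeffding) bound then yields a constant $c_1>0$ depending only on $p$ such that
\begin{equation*}
\Pp\left(\mu(B_{r,\delta_0}^{x_1,x_2}(y))\le \tfrac{p^2}{4}\mu(B_r(y))\right)\le \exp\left(-c_1\mu(B_r(y))\right),
\end{equation*}
valid once $\mu(B_r(y))$ is large enough to absorb the removal of $\{x_1,x_2\}$, which in particular holds for all $r\ge\log^\theta R$ with $R$ large.

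To pass to a uniform bound, I would discretize $r$ along a dyadic sequence $r_j=2^j$. Since $B_{r,\delta_0}^{x_1,x_2}(y)$ is monotone in $r$ and $\mu(B_r(y))$ changes by at most a factor of $2^d$ between consecutive dyadic scales, it suffices to verify \eqref{l2-0-1} at these $r_j$, at the cost of replacing $p^2/4$ by $\kappa:=p^2/(4\cdot 2^d)$. For fixed $R\in\N_+$, the number of triples $(x_1,x_2,y)\in(B_R\cap\Z^d)^3$ is at most $CR^{3d}$, the number of dyadic scales $r_j\in[\log^\theta R,R]$ is $O(\log R)$, and $\mu(B_{r_j}(y))\ge c_2\log^{\theta d}R$. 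A union bound then shows that the probability of \eqref{l2-0-1} failing for some admissible $(x_1,x_2,y,r_j)$ at scale $R$ is at most $CR^{3d}\log R\cdot\exp(-c_1c_2\log^{\theta d}R)$.

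Since $\theta d>1$, the term $\exp(-c\log^{\theta d}R)$ decays faster than any polynomial in $R$, making these probabilities summable along any geometric sequence $R=2^n$. Borel--Cantelli then produces a random $R_0(\w)$ past which \eqref{l2-0-1} holds, and a monotonicity argument upgrades this from geometric $R$ to all $R>R_0(\w)$. The main obstacle is really just careful bookkeeping: one has to choose the logarithmic threshold $r\ge\log^\theta R$ with $\theta>1/d$ precisely so that exponential concentration at scale $\mu(B_r(y))\gtrsim\log^{\theta d}R$ overcomes the polynomial cardinality $R^{3d}$ of the union bound over the triples $(x_1,x_2,y)$; a smaller threshold would not suffice.
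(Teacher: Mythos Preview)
Your proposal is correct and follows essentially the same route as the paper: fix $\delta_0$ with $p:=\Pp(w_{x,y}>\delta_0)>0$, observe that the indicators $\I_{\{w_{x_1,z}>\delta_0,\,w_{x_2,z}>\delta_0\}}$ are i.i.d.\ Bernoulli$(p^2)$ over $z\in B_r(y)\setminus\{x_1,x_2\}$, apply a Cram\'er--Chernoff bound to get failure probability $\le c_1\exp(-c_2 r^d)$, then do a union bound over $(x_1,x_2,y,r,R)$ and invoke Borel--Cantelli, with $\theta>1/d$ ensuring that $\exp(-c\log^{\theta d}R)$ beats the polynomial factor $R^{O(d)}$.

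The only cosmetic difference is that the paper sums directly over integer $R\ge 1$ and integer $r\in[\log^\theta R,R]$ (which suffices since $B_r(y)\cap\Z^d$ only changes at integer $r$), whereas you discretize dyadically and then upgrade by monotonicity. The paper's direct summation is slightly cleaner here: since $\exp(-c\log^{\theta d}R)$ already decays super-polynomially, the sum over all integers $R$ converges without any need for a geometric subsequence, and this sidesteps the small bookkeeping wrinkle in your monotonicity step for $R$ (where both $B_R$ and the threshold $\log^\theta R$ move with $R$). But this is a matter of presentation, not substance.
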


\begin{proof}
By {\bf Assumption (H1)}(ii),
there are
constants $\delta_0\in (0,1)$ and $p_0\in (0,1]$ such that
$\Pp(w_{x,y}>\delta_0)\ge p_0$
for all $x\not= y\in \Z^d$.
Fix $x_1\neq x_2\in \Z^d$, and define  $\eta_z^{x_1,x_2}(\w):=\I_{\{w_{x_1,z}>\delta_0,w_{x_2,z}>\delta_0\}}(\w)$ for any $z\in \Z^d \setminus\{x_1, x_2\}$.
According to {\bf Assumption (H1)}, $\{\eta_z^{x_1,x_2}:z\in \Z^d\setminus\{ x_1,  x_2\}\}$ is a sequence of i.i.d.\ and $\{0,1\}$-valued random variables with $\Pp\left(\eta_z^{x_1,x_2}=1\right)
\ge p_0^2$. Hence, by choosing $\kappa\in (0,p_0^2/2)$,
we get that for all $r>0$,
\begin{align*}
&\Pp\bigg(\sum_{z\in  B_r(y)
\setminus\{ x_1,  x_2\} }
\I_{\{w_{x_1,z}>\delta_0,w_{x_2,z}>\delta_0\}}\le \kappa \mu(B_r(y))\bigg)\\
&\le
\Pp\bigg(\frac{\sum_{z\in  B_r(y)
\setminus\{ x_1,  x_2\} }
\left(\eta_z^{x_1,x_2}-\Ee[\eta_z^{x_1,x_2}]\right)}{\mu(B_r(y))}
\le 2\kappa-p_0^2\bigg)
\le c_1e^{-c_2r^{d}},
\end{align*}
where
the last inequality is due to
Cramer-Chernoff's theorem (see \cite[Theorem 1.3, Remark 1.4 and Example 1.5(1) on p.\ 2-3]{BDR}). This implies that,
for every $\theta>1/d$
\begin{equation*}
\sum_{R=1}^\infty \sum_{x_1,x_2,y\in B_{R}\,\,{\rm with }\,\, x_1\neq x_2}\sum_{r=\log^\theta R}^R\Pp\bigg(\sum_{z\in B_r(y)
\setminus\{ x_1,  x_2\}
}\I_{\{w_{x_1,z}>\delta_0,w_{x_2,z}>\delta_0\}}\le \kappa\mu(B_r(y))\bigg)<\infty,
\end{equation*}
which along with Borel-Cantelli's lemma
yields \eqref{l2-0-1}.
\end{proof}

\begin{proposition}\label{l2-1}{\bf(Local Poincar\'e inequality)} Under {\bf Assumption (H1)}, there exists a constant $C_1>0$ such that, for any $\theta>1/d$, there is a random variable $R_0(\w)\ge1$ so that
for all $f:\Z^d \to \R$, $R>R_0(\w)$, $y \in B_R$ and $r \in [ \log^\theta R,  R]$,
\begin{equation}\label{l2-1-1}
\oint_{B_r(y)}f^2(x)\,\mu(dx)-\bigg(\oint_{B_r(y)}f(x)\,\mu(dx)\bigg)^2\le C_1r^{\alpha-d}\mathscr{E}_{B_r(y)}^{\w}(f,f).
\end{equation}
\end{proposition}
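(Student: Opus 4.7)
The plan is to exploit Lemma \ref{l2-0} through a standard ``two-step path'' argument: the variance on the left-hand side can be rewritten as a double sum over pairs $(x_1,x_2)\in B_r(y)^2$, and each increment $(f(x_1)-f(x_2))^2$ can be bounded by inserting an intermediate point $z$ chosen from the good set $B_{r,\delta_0}^{x_1,x_2}(y)$ of vertices whose edges to both $x_1$ and $x_2$ have conductance at least $\delta_0$. The main obstacle in this long-range degenerate setting is precisely to guarantee that this good set has positive density with high probability, which is what Lemma \ref{l2-0} supplies but only uniformly for radii $r\ge \log^\theta R$.

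I would start from the symmetrization identity
\begin{equation*}
\oint_{B_r(y)} f^2\,d\mu - \left(\oint_{B_r(y)} f\,d\mu\right)^2 = \frac{1}{2\mu(B_r(y))^2}\sum_{x_1,x_2\in B_r(y)}(f(x_1)-f(x_2))^2,
\end{equation*}
apply Lemma \ref{l2-0} (after replacing $R$ by $2R$ to accommodate points of $B_r(y)$ for $y\in B_R$, which is harmless because $\log^\theta(2R)\le C\log^\theta R$ once $R$ is large) to obtain $\mu(B_{r,\delta_0}^{x_1,x_2}(y))\ge \kappa\mu(B_r(y))$, and then average the triangle inequality over the good midpoint set:
\begin{equation*}
(f(x_1)-f(x_2))^2 \le \frac{2}{\kappa\mu(B_r(y))}\sum_{z\in B_{r,\delta_0}^{x_1,x_2}(y)}\left[(f(x_1)-f(z))^2+(f(z)-f(x_2))^2\right].
\end{equation*}

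The final step is to sum over $x_1,x_2$ and interchange orders of summation. For the first term, fix $x_1,z\in B_r(y)$; the $x_2$-sum runs over at most $\mu(B_r(y))$ points (after discarding $\I_{\{w_{x_2,z}>\delta_0\}}$), while $\I_{\{w_{x_1,z}>\delta_0\}}\le w_{x_1,z}/\delta_0$. Since $|x_1-z|\le Cr$ inside $B_r(y)$, the jumping kernel can be inserted at the cost of a factor $Cr^{d+\alpha}$:
\begin{equation*}
(f(x_1)-f(z))^2\I_{\{w_{x_1,z}>\delta_0\}}\le \frac{Cr^{d+\alpha}}{\delta_0}\cdot \frac{w_{x_1,z}}{|x_1-z|^{d+\alpha}}(f(x_1)-f(z))^2.
\end{equation*}
Summing over $x_1,z\in B_r(y)$ gives $2\E_{B_r(y)}^{\w}(f,f)$, and the second term is handled symmetrically. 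Combining everything with $\mu(B_r(y))\simeq r^d$ yields
\begin{equation*}
\sum_{x_1,x_2\in B_r(y)}(f(x_1)-f(x_2))^2 \le \frac{C'}{\kappa\delta_0}\,r^{d+\alpha}\,\E_{B_r(y)}^{\w}(f,f),
\end{equation*}
which, after division by $2\mu(B_r(y))^2\simeq r^{2d}$, is exactly \eqref{l2-1-1} with the claimed prefactor $r^{\alpha-d}$. No further randomness is invoked beyond Lemma \ref{l2-0}, so the threshold $R_0(\w)$ and the lower restriction $r\ge \log^\theta R$ are inherited verbatim; the reason we cannot push below that scale is that the Borel--Cantelli step in Lemma \ref{l2-0} fails to yield a uniform density of good midpoints at smaller radii.
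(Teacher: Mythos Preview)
Your proof is correct, and it takes a somewhat different (and cleaner) route than the paper's.

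Both arguments rest on Lemma \ref{l2-0} and the two-step path idea, but they organize the combinatorics differently. The paper does not expand the variance into a double sum of squared increments; instead it keeps the form $\mu(B_r(y))^{-2}\sum_x\bigl(\sum_{x'}(f(x)-f(x'))\bigr)^2$, \emph{selects} a single midpoint $v(x,x')$ for each pair, applies Cauchy--Schwarz to the inner sum, and is then left with the double sum $\sum_x\sum_{x'}(f(x)-f(v(x,x')))^2\frac{w_{x,v(x,x')}}{|x-v(x,x')|^{d+\alpha}}$. To control this, the paper invokes a counting trick: for each fixed $x$, choose the $v(x,x')$'s to be as spread out as possible, so that each $u\in B_{r,\delta_0}^x(y)$ is used at most $\kappa^{-1}$ times. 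By contrast, you expand the variance fully, \emph{average} over the whole good set $B_{r,\delta_0}^{x_1,x_2}(y)$, and then dispose of the indicators via $\I_{\{w_{x_2,z}>\delta_0\}}\le 1$ (for the free variable) and $\I_{\{w_{x_1,z}>\delta_0\}}\le w_{x_1,z}/\delta_0$ (to reinstate the conductance). This sidesteps both the Cauchy--Schwarz step and the combinatorial selection argument, at no cost in the final constant. Your observation about needing $x_1,x_2\in B_r(y)\subset B_{2R}$ rather than $B_R$ is also a small point the paper glosses over; both proofs are easily repaired by applying Lemma \ref{l2-0} at scale $2R$.
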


\begin{proof}
Note that, by \eqref{l2-0-1} there are a constant $\delta_0\in (0,1)$ and a random variable $R_0(\w)\ge 1$ such that
for all $R>R_0(\w)$, $x,x',y\in B_R$ with $x\neq x'$ and
$r\in [ \log^\theta R,  R]$,
we can find
$v(x,x')\in B_{r,\delta_0}^{x,x'}(y)$,  i.e.,
$v(x,x') \in B_r(y)\setminus \{x, x'\}$ with  $ w_{x, v(x,x')}>\delta_0$ and $w_{x',v(x,x')}>\delta_0$.
Thus, for all $f:\Z^d\to \R$, $R>R_0(\w)$, $y\in B_R$ and
$r\in [\log^\theta R,  R]$,
\begin{align*}
& \int_{B_r(y)}\Big(f(x)-
\oint_{B_r(y)}f\,d\mu
\Big)^2\,\mu(dx)\\
&=
\mu(B_r(y))^{-2}\sum_{x\in B_r(y)}\bigg(\sum_{x'\in B_r(y)
\setminus \{x\}}
(f(x)-f(x'))\bigg)^2\\
&\le 2\mu(B_r(y))^{-2}
\sum_{x\in B_r(y)}\bigg[\bigg(\sum_{x'\in B_r(y)\setminus\{ x\} }(f(x)-f(v(x,x')))\bigg)^2+\bigg(\sum_{x'\in B_r(y)\setminus\{ x\} }
(f(x')-f(v(x,x')))\bigg)^2\bigg]\\
&=:I_1^{y,r}+I_2^{y,r}.
\end{align*}
By the Cauchy-Schwartz inequality, it holds that
\begin{align}
I_1^{y,r}&\le c_1r^{-2d}\sum_{x\in B_r(y)}\bigg(
\sum_{x'\in B_r(y)\setminus\{ x\} }  (f(x)-f(v(x,x')) )^2\frac{w_{x,v(x,x')}}{|x-v(x,x')|^{d+\alpha}}\bigg)\nonumber\\
&\qquad\qquad\qquad\quad\times
\bigg(\sum_{x'\in B_r(y)\setminus\{ x\} } w_{x,v(x,x')}^{-1}|x-v(x,x')|^{d+\alpha}\bigg)\nonumber\\
&\le
c_2r^{\alpha}\sum_{x\in B_r(y)}\sum_{x'\in B_r(y)\setminus\{ x\} } (f(x)-f(v(x,x')))^2\frac{w_{x,v(x,x')}}{|x-v(x,x')|^{d+\alpha}},
\label{eq:jobfoem}\end{align}
where in the last inequality we have used the fact that $w_{x,v(x,x')}^{-1}\le \delta_0^{-1}$ and
$|x-v(x,x')|\le
2\sqrt{d} r$,
thanks to  the definition of $B^{x,x'}_{r,\delta_0}(y)$.

For every $x\in B_r(y)$ and $\delta>0$, set $B_{r,\delta}^x(y):=
\{z\in B_r(y)\setminus \{x\}: w_{x,z}>\delta\}$.
To estimate
the second sum in \eqref{eq:jobfoem} for fixed $x\in \Z^d$,
we will choose different $v(x,x')$ as much as
possible for different $x'$.
The worst case is that $B_{r,\delta_0}^{x,x'}(y)$ is the
same set
for all $x'\in B_r(y)$. According to
\eqref{l2-0-1} again, we know that in this case one can choose $\{v(x,x'): x'\in B_r(y)\}$ such that, for all $R>R_0(\w)$ and
$r\in [
\log^\theta R,
R]$,
each $u\in B_{r,\delta_0}^{x}(y)$ will be taken for at most $N:=\frac{\mu(B_r(y))}{\mu(B_{r,\delta_0}^{x,x'}(y))}\le \kappa^{-1}$ times for all different $x'\in B_r(y)$. By this observation, we can get \begin{align*}
\sum_{x\in B_r(y)}\sum_{x'\in B_r(y)\setminus \{x\} }\left(f(x)-f(v(x,x'))\right)^2\frac{w_{x,v(x,x')}}{|x-v(x,x')|^{d+\alpha}}
&\le \kappa^{-1}\sum_{x\in B_r(y)}\sum_{z\in B_{r,\delta_0}^x(y)}\left(f(x)-f(z)\right)^2\frac{w_{x,z}}{|x-z|^{d+\alpha}}\\
&\le \kappa^{-1}\sum_{x\in B_r(y)}\sum_{z\in B_r(y)}\left(f(x)-f(z)\right)^2\frac{w_{x,z}}{|x-z|^{d+\alpha}}.
\end{align*}

Combining both estimates above, we arrive at
\begin{align*}
I_1^{y,r} \le c_3r^\alpha\sum_{x\in B_r(y)}\sum_{z\in B_r(y)}\left(f(x)-f(z)\right)^2\frac{w_{x,z}}{|x-z|^{d+\alpha}}
=c_3r^\alpha \mathscr{E}_{B_r(y)}^{\w}(f,f).
\end{align*}

Similarly, we have  $I_2^{y,r} \leq c_4r^\alpha \mathscr{E}_{B_r(y)}^{\w}(f,f)$.
This establishes the desired assertion \eqref{l2-1-1}.
\end{proof}

Next, we establish
a
multi-scale Poincar\'e inequality, whose proof is partly motivated by that of \cite[Proposition 1.7]{AKM}.
For every
$m,n\in \N$ with $n<m$, define
$$
\Z_{m,n}^d:=\left\{z=(z_1,\cdots,z_d)\in B_{2^m}: z_i=k_i2^n\hbox{ for some odd}\ k_i\in
\Z
\hbox{ for all } 1\le i \le d\right\}.
$$
When $m=n$, we define  $\Z_{m,m}^d=\{0\}$.
Note that   $B_{2^m}=
\cup_{z\in \Z_{m,n}^d}
B_{2^n}(z)$
 and  $| \Z_{m,n}^d|=2^{d(m-n)}$.

\begin{proposition}\label{l2-2}{\bf(Multi-scale Poincar\'e inequality)}
Under {\bf Assumption (H1)}, there is a constant $C_2>0$ such that,
 for every $\theta>1/d$, there exists a random variable $m_0(\w)\ge 1$ such that for
 all integer $m>m_0(\w)$ and $n\in \N \cap [\theta \log_2 (m \log 2),  m]$,
and $f, g: B_{2^m}\to \R$,
\begin{equation}\label{l2-2-1}
\begin{split}
\sum_{x\in B_{2^m}}f(x)\bigg(g(x)-
\oint_{B_{2^m}}g \,d\mu
\bigg)\le&
\sum_{z\in \Z_{m,n}^d}\sum_{x\in B_{2^n}(z)}f(x)\bigg(g(x)-
\oint_{B_{2^n}(z)}g\,d\mu
\bigg)\\
&+C_2\mathscr{E}_{B_{2^m}}^\w(g,g)^{1/2}\sum_{k=n}^{m-1} 2^{k(d+\alpha)/2}\bigg(\sum_{y\in \Z_{m,k}^d}\bigg(\oint_{B_{2^k}(y)}f\,d\mu\bigg)^2\bigg)^{1/2}.
\end{split}
\end{equation}
\end{proposition}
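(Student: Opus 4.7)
The plan is to telescope across dyadic scales, reducing the macroscopic fluctuation of $g$ into contributions at successive intermediate scales, each of which can be controlled by the local Poincar\'e inequality (Proposition \ref{l2-1}). For each $k \in \{n, \ldots, m\}$ and $x \in B_{2^m}$, let $z_k(x)$ denote the unique $z \in \Z_{m,k}^d$ with $x \in B_{2^k}(z)$, and set $\langle g\rangle_k(x) := \oint_{B_{2^k}(z_k(x))} g\,d\mu$. Since $\langle g\rangle_m(x) = \oint_{B_{2^m}} g\,d\mu$, the pointwise identity
\begin{equation*}
g(x) - \oint_{B_{2^m}} g\,d\mu = \big(g(x) - \langle g\rangle_n(x)\big) + \sum_{k=n}^{m-1}\big(\langle g\rangle_k(x) - \langle g\rangle_{k+1}(x)\big)
\end{equation*}
holds. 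Multiplying by $f(x)$ and summing over $x \in B_{2^m}$, the contribution of $(g - \langle g\rangle_n)$ is precisely the first term on the right-hand side of \eqref{l2-2-1}, so only the telescoping differences remain to be estimated.

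Fix $k \in \{n, \ldots, m-1\}$. For $y \in \Z_{m,k+1}^d$, the ball $B_{2^{k+1}}(y)$ partitions into the $2^d$ sub-balls $\{B_{2^k}(z) : z \in S_y\}$ with $S_y := \{z \in \Z_{m,k}^d : B_{2^k}(z) \subset B_{2^{k+1}}(y)\}$. Since both $\langle g\rangle_k$ and $\langle g\rangle_{k+1}$ are constant on each such sub-ball,
\begin{equation*}
\sum_{x \in B_{2^{k+1}}(y)} f(x)\big(\langle g\rangle_k(x) - \langle g\rangle_{k+1}(x)\big) = \mu(B_{2^k})\sum_{z \in S_y}\bigg(\oint_{B_{2^k}(z)} f\,d\mu\bigg)\bigg(\oint_{B_{2^k}(z)} g\,d\mu - \oint_{B_{2^{k+1}}(y)} g\,d\mu\bigg).
\end{equation*}
Summing over $y \in \Z_{m,k+1}^d$ and applying the Cauchy--Schwarz inequality in $z \in \Z_{m,k}^d$, the absolute value of this quantity is bounded by $\mu(B_{2^k}) \bigl(\sum_{z \in \Z_{m,k}^d}(\oint_{B_{2^k}(z)} f\,d\mu)^2\bigr)^{1/2} J_k^{1/2}$, where $J_k := \sum_{z \in \Z_{m,k}^d}\bigl(\oint_{B_{2^k}(z)} g\,d\mu - \oint_{B_{2^{k+1}}(y(z))} g\,d\mu\bigr)^2$ and $y(z)$ is the parent cube of $z$ at scale $k+1$.

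To estimate $J_k$, I would apply Jensen's inequality on each sub-ball and use that all sub-balls of a common parent have equal measure to obtain
\begin{equation*}
\sum_{z \in S_y}\bigg(\oint_{B_{2^k}(z)} g\,d\mu - \oint_{B_{2^{k+1}}(y)} g\,d\mu\bigg)^2 \le 2^d \oint_{B_{2^{k+1}}(y)}\bigg(g(x) - \oint_{B_{2^{k+1}}(y)} g\,d\mu\bigg)^2 \mu(dx).
\end{equation*}
Proposition \ref{l2-1} applied at scale $2^{k+1}$, which is licit because $2^{k+1} \in [\log^\theta(2^m), 2^m]$ under the hypothesis $n \ge \theta\log_2(m\log 2)$, bounds the right-hand side above by $C_1(2^{k+1})^{\alpha-d}\mathscr{E}_{B_{2^{k+1}}(y)}^\w(g,g)$. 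Since the cubes $B_{2^{k+1}}(y)$ are pairwise disjoint and contained in $B_{2^m}$, each pair of distinct points contributes to at most one $\mathscr{E}_{B_{2^{k+1}}(y)}^\w(g,g)$, whence $\sum_y \mathscr{E}_{B_{2^{k+1}}(y)}^\w(g,g) \le \mathscr{E}_{B_{2^m}}^\w(g,g)$ and therefore $J_k \le c\, 2^{k(\alpha-d)}\mathscr{E}_{B_{2^m}}^\w(g,g)$. Combining with $\mu(B_{2^k}) \asymp 2^{kd}$ yields the factor $2^{k(d+\alpha)/2}\mathscr{E}_{B_{2^m}}^\w(g,g)^{1/2}$ at each scale $k$; summing over $k \in \{n, \ldots, m-1\}$ produces the second term of \eqref{l2-2-1}. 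The main technical point is ensuring that Proposition \ref{l2-1} applies at every intermediate scale, which is guaranteed precisely by the hypothesis on $n$; beyond this, the argument reduces to careful but essentially routine bookkeeping.
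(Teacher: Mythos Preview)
Your proposal is correct and follows essentially the same dyadic telescoping strategy as the paper. The only difference is cosmetic: at the step where you bound $J_k$, you pass via Jensen to the full variance $\oint_{B_{2^{k+1}}(y)}(g-\oint_{B_{2^{k+1}}(y)}g)^2$ and invoke Proposition~\ref{l2-1} directly, whereas the paper keeps the cross-scale quantity $\oint_{B_{2^k}(y)}(g-\oint_{B_{2^{k+1}}(z)}g)^2$ and re-runs the intermediate-point argument from the proof of Proposition~\ref{l2-1} inline. Your route is slightly cleaner and yields the same bound.
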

\begin{proof} For any $1\le k<m$, $f:B_{2^m}\to \R$ and $g:B_{2^m}\to \R$,
\begin{equation}\label{l2-2-1a}
\begin{split}
&\sum_{z\in \Z_{m,k+1}^d}\int_{B_{2^{k+1}}(z)}f(x)\bigg(g(x)-\oint_{B_{2^{k+1}}(z)} g\,d\mu\bigg)\,\mu(dx)\\
&=\sum_{z\in \Z_{m,k+1}^d}\sum_{y\in \Z_{m,k}^d\cap B_{2^{k+1}}(z) }\int_{B_{2^{k}}(y)}f(x)\bigg(g(x)-\oint_{B_{2^{k+1}}(z)} g\,d\mu\bigg)\,\mu(dx)\\
&=\sum_{z\in \Z_{m,k+1}^d}\sum_{y\in \Z_{m,k}^d\cap B_{2^{k+1}}(z) }
\Bigg[\int_{B_{2^{k}}(y)}f(x)\left(g(x)-\oint_{B_{2^{k}}(y)} g\,d\mu\right)\,\mu(dx)\\
&\qquad \qquad\qquad\qquad\qquad\qquad+
\int_{B_{2^{k}}(y)}f(x)\left(\oint_{B_{2^{k}}(y)} g\,d\mu-\oint_{B_{2^{k+1}}(z)} g\,d\mu\right)\,\mu(dx)\Bigg]\\
&=\sum_{y\in \Z_{m,k}^d}\int_{B_{2^{k}}(y)}f(x)\left(g(x)-\oint_{B_{2^{k}}(y)} g\,d\mu\right)\,\mu(dx)\\
&\quad +\mu(B_{2^k})
\sum_{z\in \Z_{m,k+1}^d}\sum_{y\in \Z_{m,k}^d\cap B_{2^{k+1}}(z) }
\left(\oint_{B_{2^k}(y)}f\,d\mu\right)\left(\oint_{B_{2^{k}}(y)} g\,d\mu-\oint_{B_{2^{k+1}}(z)} g\,d\mu\right).
\end{split}
\end{equation}

According to the Cauchy-Schwartz inequality,
\begin{equation}\label{l2-2-1ab}
\begin{split}
&\mu(B_{2^k})\sum_{z\in \Z_{m,k+1}^d}\sum_{y\in \Z_{m,k}^d\cap B_{2^{k+1}}(z) }
\left(\oint_{B_{2^k}(y)}f\,d\mu\right)\left(\oint_{B_{2^{k}}(y)} g\,d\mu-\oint_{B_{2^{k+1}}(z)} g\,d\mu\right)\\
&\le c_12^{kd}\left(\sum_{z\in \Z_{m,k+1}^d}\sum_{y\in \Z_{m,k}^d\cap B_{2^{k+1}}(z) }
\left(\oint_{B_{2^{k}}(y)} g\,d\mu-\oint_{B_{2^{k+1}}(z)} g\,d\mu\right)^2\right)^{1/2}
\left(\sum_{y\in \Z_{m,k}^d}\left(\oint_{B_{2^k}(y)}f\,d\mu\right)^2\right)^{1/2} \\
 &\le c_12^{kd}\left(\sum_{z\in \Z_{m,k+1}^d}\sum_{y\in \Z_{m,k}^d\cap B_{2^{k+1}}(z) }
  \oint_{B_{2^{k}}(y)} \left(g(x)-\oint_{B_{2^{k+1}}(z)} g\,d\mu\right)^2 \,\mu(dx)    \right)^{1/2}
\left(\sum_{y\in \Z_{m,k}^d}\left(\oint_{B_{2^k}(y)}f\,d\mu\right)^2\right)^{1/2} .
\end{split}
\end{equation}

Note that, by (the proof of) Lemma \ref{l2-0}, there is $\delta_0\in (0,1)$ such that for all $\theta>1/d$ there exists a random variable $m_0(\w)\ge1$ so that for all
$m>m_0(\w)$,
$\theta \log_2 (m\log 2) \le k\le m$, $x\in B_{2^k}(y)$,
$x'\in B_{2^{k+1}}(z)
\setminus \{x\}$
(and with $y\in \Z_{m,k}^d\cap B_{2^{k+1}}(z) $ and $z\in \Z_{m,k+1}^d$),  we can choose
$v(x,x')\in B_{2^{k},\delta_0}^{x,x'}(y)$. Hence,
\begin{align*}
&  \oint_{B_{2^{k}}(y)} \left(g(x)-\oint_{B_{2^{k+1}}(z)} g\,d\mu\right)^2 \,\mu(dx)\\
&=\mu(B_{2^k}(y))^{-1}\mu(B_{2^{k+1}}(z))^{-2}\sum_{x\in B_{2^k}(y)}
\left(\sum_{x'\in B_{2^{k+1}}(z):x'\neq x}(g(x)-g(x'))\right)^2\\
&\le c_32^{-3kd}\sum_{x\in B_{2^k}(y)}\left(\left(
\sum_{x'\in B_{2^{k+1}}(z):x'\neq x}(g(x)-g(v(x,x')))\right)^2
+
\left(\sum_{x'\in B_{2^{k+1}}(z):x'\neq x}(g(x')-g(v(x,x')))\right)^2\right) .
\end{align*}

Further applying the Cauchy-Schwartz inequality we obtain
\begin{align*}
&\sum_{x\in B_{2^k}(y)}\left(
\sum_{x'\in B_{2^{k+1}}(z):x'\neq x}(g(x)-g(v(x,x')))\right)^2\\
&\le \sum_{x\in B_{2^k}(y)}\left(
\sum_{x'\in B_{2^{k+1}}(z):x'\neq x}(g(x)-g(v(x,x')))^2\frac{w_{x,v(x,x')}}{|x-v(x,x')|^{d+\alpha}}\right)\\
& \qquad \qquad\qquad \times
\left(\sum_{x'\in B_{2^{k+1}}(z):x'\neq x}w_{x,v(x,x')}^{-1}
\left|x-v(x,x')\right|^{d+\alpha}\right)\\
&\le c_42^{k(2d+\alpha)}\sum_{x\in B_{2^k}(y)}
\sum_{x'\in B_{2^{k+1}}(z):x'\neq x}(g(x)-g(v(x,x')))^2\frac{w_{x,v(x,x')}}{|x-v(x,x')|^{d+\alpha}},
\end{align*}
where the last inequality we used the fact that $w_{x,v(x,x')}^{-1}\le \delta_0^{-1}$, thanks to $v(x,x')\in B_{2^{k},\delta_0}^{x,x'}(y)$.

As explained in the proof of Proposition \ref{l2-1}, in order to estimate the item
above, for fixed $x\in B_{2^k}(y)$ we will choose different $v(x,x')\in B_{2^{k},\delta_0}^{x,x'}(y)$ as much as possible for
different $x'\in B_{2^{k+1}}(z)$.  Hence, using \eqref{l2-0-1} and following the same argument in the proof of Proposition \ref{l2-1}, we can find that
for every $m>m_0(\w)$ and
$k \in \Z \cap [ \theta \log_2 (m\log 2) , m]$,
  \begin{align*}
&\sum_{x\in B_{2^k}(y)}
\sum_{x'\in B_{2^{k+1}}(z)
\setminus \{x\}}
(g(x)-g(v(x,x')))^2\frac{w_{x,v(x,x')}}{|x-v(x,x')|^{d+\alpha}}\\
&\le
c_5 \sum_{x\in B_{2^k}(y)}
\sum_{x'\in B_{2^k}(y) \setminus \{x\} }
(g(x)-g(x'))^2\frac{w_{x,x'}}{|x-x'|^{d+\alpha}} \\
& \le c_5\sum_{x\in B_{2^k}(y)}
 \sum_{x'\in B_{2^{k+1}}(z) \setminus \{x\} }
(g(x)-g(x'))^2\frac{w_{x,x'}}{|x-x'|^{d+\alpha}},
\end{align*}
where in the last inequality we used the fact that $B_{2^k}(y)\subset B_{2^{k+1}}(z)$ for
every $y\in \Z_{m,k}^d\cap B_{2^{k+1}}(z) $ and $z\in \Z_{m,k+1}^d$.

In the same way,  we can obtain
\begin{align*}
\sum_{x\in B_{2^k}(y)}\left(
\sum_{x'\in B_{2^{k+1}}(z)\setminus \{x\} } (g(x')-g(v(x,x')))\right)^2\le
c_5 2^{k(2d+\alpha)}
\sum_{x\in B_{2^k}(y)}\sum_{x'\in B_{2^{k+1}}(z) \setminus \{x\} } (g(x)-g(x'))^2\frac{w_{x,x'}}{|x-x'|^{d+\alpha}}.
\end{align*}
Thus, for every $m>m_0(\w)$ and
$k \in \Z \cap [ \theta \log_2 (m\log 2) , m]$,
 \begin{align*}
\oint_{B_{2^{k}}(y)} \left(g(x)-\oint_{B_{2^{k+1}}(z)} g\,d\mu\right)^2 \,\mu(dx)\le
c_62^{-k(d-\alpha)}\int_{B_{2^k}(y)}\int_{B_{2^{k+1}}(z)}(g(x)-g(x'))^2\frac{w_{x,x'}}{|x-x'|^{d+\alpha}}\,\mu(dx)\,\mu(dx').
\end{align*}
Thus we have
\begin{align*}
 & \sum_{z\in \Z_{m,k+1}^d}\sum_{y\in \Z_{m,k}^d\cap B_{2^{k+1}}(z) }
\oint_{B_{2^{k}}(y)} \left(g(x)-\oint_{B_{2^{k+1}}(z)} g\,d\mu\right)^2 \,\mu(dx) \\
 &\le c_62^{-k(d-\alpha)}\sum_{z\in \Z_{m,k+1}^d}\sum_{y\in \Z_{m,k}^d\cap B_{2^{k+1}}(z) }
\int_{B_{2^k}(y)}\int_{B_{2^{k+1}}(z)}(g(x)-g(x'))^2\frac{w_{x,x'}}{|x-x'|^{d+\alpha}}\,\mu(dx)\,\mu(dx')\\
&\le c_62^{-k(d-\alpha)}
\int_{B_{2^m}}\int_{B_{2^m}}(g(x)-g(x'))^2\frac{w_{x,x'}}{|x-x'|^{d+\alpha}}\,\mu(dx)\,\mu(dx')=2c_62^{-k(d-\alpha)}
\mathscr{E}_{B_{2^m}}^\w(g,g).
\end{align*}

Combining this with \eqref{l2-2-1ab} and \eqref{l2-2-1a}, and taking the summation in \eqref{l2-2-1a} from $k=n$
(with $m>m_0(\w)$ and
$n\in \Z \cap [ \theta \log_2 (m\log 2 ) ,  m]  $)
to $k=m-1$,
we get  the desired inequality \eqref{l2-2-1}.
\end{proof}

\begin{remark}\label{rem-Poin}
When the sequence of i.i.d. non-negative random variables $\{w_{x,y}: (x,y)\in E\}$ is bounded away from zero; that is, there is a constant $C_0>0$ such that for all $(x,y)\in E$, $w_{x,y}\ge C_0$. Then, the local Poincar\'e inequality \eqref{l2-1-1} can be verified by directly applying the Cauchy-Schwartz inequality (see the proof of \cite[Theorem 3.1]{CK08}). In particular, in this case we can
show
 that the local Poincar\'e inequality \eqref{l2-1-1} holds for all $r\ge1$, and so the multi-scale Poincar\'e inequality \eqref{l2-2-1} holds for all $1\le n\le m$.
\end{remark}

\section{Energy Estimates of Solutions to Localized Poisson Equation} \label{section2-}

In this section, we are concerned with energy estimates (i.e., $H^{\alpha/2}$-bounds) for solutions to
localized Poisson equations associated with the operator $\sL_U^{\w}$ on compact sets $U$,  which
is a crucial ingredient  for the quantification of the corresponding stochastic homogenization. In particular, to drive local $H^{\alpha/2}$-bounds  we will make use of the multi-scale Poincar\'e inequality established in Proposition \ref{l2-2}.

Suppose that {\bf Assumption (H1)} holds.
First, we consider the case that
$\alpha\in (1,2)$. According to Proposition \ref{l2-1}, there is a random variable $m_0(\w)\ge 1$ such that for all
$m>m_0(\w)$, there exists a unique solution $\phi_m:B_{2^m}\to \R^d$ to the following (local Poisson) equation
\begin{equation}\label{e2-1}
\begin{cases}
 \sL^\w_{B_{2^m}} \phi_m (x)=-V(x)+\displaystyle
 \oint_{B_{2^m}} V\,d\mu,
 \quad  x\in B_{2^m},\\
 \sum_{x\in B_{2^m}}  \phi_m(x)=0,
\end{cases}
\end{equation}
where $\sL^\w_{B_{2^m}}$ is defined by \eqref{e1-1a} with $U=B_{2^m}$ and
\begin{equation}\label{e2-2}
V(x):=\sum_{z\in \Z^d} \frac{z}{|z|^{d+\alpha}}w_{x,x+z}.
\end{equation} Indeed, $ \sL^\w_{B_{2^m}}$ is the generator of the Dirichlet form $(\mathscr{E}_{B_{2^m}}^{\w}, L^2( B_{2^m};\mu ))$, which is associated with a reflected symmetric $\alpha$-stable-like process $X^{\w,B_{2^m}}$ on $B_{2^m}$.
It follows from  Proposition \ref{l2-1} that there is a random variable $m_0(\w)\ge 1$ such that for all
$m>m_0(\w)$, the process $X^{B_{2^m}}$ is exponentially ergodic. This yields the existence and the uniqueness of the solution to the equation \eqref{e2-1}.

\begin{proposition}\label{p2-1} Assume that {\bf Assumption (H1)} holds.
Let $\phi_m: B_{2^m}\mapsto \R^d$ be the $($unique$)$ solution
to \eqref{e2-1}.
Suppose that $\alpha\in (1,2)$
and $d>\alpha$.
Then, for any $\gamma>0$, there exist a constant $C_3>0$ and a random variable $m^*_1(\omega)\ge1$ such that for every $m>m^*_1(\w)$,
\begin{equation}\label{p2-1-2}
 \mathscr{E}_{B_{2^m}}^\w(\phi_m,\phi_m) \le C_3
m^{\frac{(1+\gamma)\alpha}{(2(d-\alpha))\wedge d}}2^{md}.
\end{equation}
\end{proposition}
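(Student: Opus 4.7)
The starting point is the energy identity obtained by pairing \eqref{e2-1} with $\phi_m$ itself. Using \eqref{e1-3a} coordinate-wise and the normalisation $\sum_{x\in B_{2^m}}\phi_m(x)=0$, which absorbs $\bar V:=\oint_{B_{2^m}}V\,d\mu$, one obtains
\[
\mathscr{E}^{\w}_{B_{2^m}}(\phi_m,\phi_m)=\sum_{x\in B_{2^m}}\bigl\langle V(x)-\bar V,\,\phi_m(x)\bigr\rangle.
\]
I would then apply the multi-scale Poincar\'e inequality (Proposition \ref{l2-2}) coordinate by coordinate with $f=V^{(i)}-\bar V^{(i)}$ and $g=\phi_m^{(i)}$, recombining by Cauchy--Schwarz in $i\in\{1,\dots,d\}$. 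This decomposes the right-hand side into a local oscillation term at a free scale $2^n$ and a multi-scale term that accumulates $\ell^{2}$-norms of local averages of $V-\bar V$ over the dyadic scales $n\le k<m$.

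The local term is controlled by Cauchy--Schwarz and Proposition \ref{l2-1}: the oscillation of $\phi_m^{(i)}$ on each $B_{2^n}(z)$ is bounded by $C\cdot 2^{n\alpha}\mathscr{E}_{B_{2^n}(z)}^{\w}(\phi_m^{(i)},\phi_m^{(i)})$, and pairing with the deterministic bound $\|V-\bar V\|_\infty\le C$---valid thanks to $\alpha>1$, which makes $\sum_{z\ne 0}|z|^{1-d-\alpha}<\infty$---gives an overall contribution $C\,2^{md/2}\,2^{n\alpha/2}\,\mathscr{E}^{\w}_{B_{2^m}}(\phi_m,\phi_m)^{1/2}$. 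The heart of the argument is the multi-scale term, for which one needs an almost-sure uniform bound on
\[
S_{m,k}:=\sum_{y\in\Z_{m,k}^d}\Bigl|\oint_{B_{2^k}(y)}(V-\bar V)\,d\mu\Bigr|^{2}.
\]
The decisive structural observation is that, by the antisymmetry of $z\mapsto z/|z|^{d+\alpha}$, the pairs $(x,z)$ and $(x+z,-z)$ cancel whenever both endpoints lie in $B_{2^k}(y)$; only the boundary edges $\{x,x+z\}$ with $x\in B_{2^k}(y)$ and $x+z\notin B_{2^k}(y)$ survive in $\oint_{B_{2^k}(y)}V$. Since conductances on distinct edges are independent, $\oint_{B_{2^k}(y)}V$ is then a sum of independent centred bounded random vectors with Hoeffding variance proxy $\sigma_k^{2}\asymp 2^{-k(d+1)}$, which one checks by computing $\sum_{\{u,v\}\text{ boundary}}|v-u|^{2-2d-2\alpha}\asymp 2^{k(d-1)}$. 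A sub-Gaussian tail bound, a union bound over $y\in\Z_{m,k}^d$ and over admissible scales $k$, and Borel--Cantelli in $m$ then produce a random $m_1^\ast(\w)$ such that
\[
S_{m,k}\,\le\,C\,m^{1+\gamma}\cdot 2^{d(m-k)}\cdot 2^{-k(d+1)}
\]
for every $m>m_1^\ast(\w)$ and every admissible $k$.

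Plugging this into Proposition \ref{l2-2} and summing the geometric series $\sum_{k\ge n}2^{k(\alpha-d-1)/2}$---convergent since $d>\alpha$ forces $\alpha-d-1<0$---bounds the multi-scale term by $C\,m^{(1+\gamma)/2}\,2^{md/2}\,2^{n(\alpha-d-1)/2}\,\mathscr{E}^{\w}_{B_{2^m}}(\phi_m,\phi_m)^{1/2}$. Absorbing $\mathscr{E}^{\w}_{B_{2^m}}(\phi_m,\phi_m)^{1/2}$ into the left-hand side then gives, for every admissible $n$,
\[
\mathscr{E}^{\w}_{B_{2^m}}(\phi_m,\phi_m)\,\le\,C\,2^{md}\bigl(2^{n\alpha}+m^{1+\gamma}\,2^{n(\alpha-d-1)}\bigr),
\]
and optimising $n$ subject to the lower constraint $n\ge\theta\log_2(m\log 2)$ from Proposition \ref{l2-2} produces the claimed logarithmic exponent, with the two regimes appearing in $(2(d-\alpha))\wedge d$ reflecting whether the unconstrained balance of the two terms or the lower constraint on $n$ is active. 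The main obstacle is the uniform concentration step: the Hoeffding deviation must be chosen large enough to dominate the cardinality $|\Z_{m,k}^d|\le 2^{dm}$ at every scale $k$ simultaneously, so that tail probabilities are summable in $m$ and Borel--Cantelli supplies the almost-sure threshold $m_1^\ast(\w)\ge 1$.
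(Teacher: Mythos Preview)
Your argument is correct and takes a genuinely different route from the paper, with a sharper outcome. Both proofs start from the energy identity and apply Proposition~\ref{l2-2}; the divergence is in how $S_{m,k}:=\sum_{y\in\Z_{m,k}^d}\bigl|\oint_{B_{2^k}(y)}V\bigr|^2$ is controlled. The paper computes the fourth moment $\Ee[S_{m,k}^2]$ by expanding $V(x)=\sum_z z|z|^{-d-\alpha}\xi_{x,x+z}$ with $\xi=w-1$ and pairing the independent $\xi$'s, then uses Markov's inequality plus Borel--Cantelli to obtain $S_{m,k}\le 2^{(m-k)d-\theta kd}$ for any $\theta\in(\alpha/d,1)$, but only for $k\ge\frac{(1+\gamma)\log_2 m}{2(1-\theta)d}$; pushing $\theta\downarrow\alpha/d$ is precisely what generates the $2(d-\alpha)$ in the exponent of \eqref{p2-1-2}. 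Your antisymmetry observation---that in $\sum_{x\in B}V(x)$ every interior edge cancels, leaving only independent boundary-edge contributions---is a real structural gain: Hoeffding applies with proxy $\sigma_k^2\asymp 2^{-k(d+1)}$ (your count $\sum_{u\in B,v\notin B}|v-u|^{2-2d-2\alpha}\asymp 2^{k(d-1)}$ is correct since $2-d-2\alpha<-1$ for $d\ge 2$, $\alpha>1$), and the sub-Gaussian tail beats the union bound over all $(k,y)$ simultaneously, so no scale restriction beyond that of Proposition~\ref{l2-2} is needed. One correction to your last paragraph: with your estimates the unconstrained balance sits at $n^*\approx\frac{(1+\gamma)\log_2 m}{d+1}$, which is \emph{always} below the Proposition~\ref{l2-2} threshold $n_{\min}\approx\frac{(1+\gamma)\log_2 m}{d}$, so the constraint is always binding and you obtain the single exponent $\frac{(1+\gamma)\alpha}{d}$---there is no second regime. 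This is at least as good as \eqref{p2-1-2} (strictly better when $d<2\alpha$), so the proposition follows a fortiori; you have simply proved more than you claim.
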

\begin{proof} Let $\phi_m:=(\phi_m^{(1)},\phi_m^{(2)},\cdots,\phi_m^{(d)})$ and $V(x)=(V^{(1)}(x),V^{(2)}(x),\cdots,V^{(d)}(x))$. In particular,
$$\mathscr{E}_{B_{2^m}}^\w(\phi_m,\phi_m)=\sum_{i=1}^d \mathscr{E}_{B_{2^m}}^\w(\phi_m^{(i)},\phi_m^{(i)}).$$
According to
\eqref{e1-3a} and
\eqref{e2-1}, for $1\le i\le d$,
\begin{align*}
\mathscr{E}_{B_{2^m}}^\w(\phi_m^{(i)},\phi_m^{(i)})&=-
\int_{B_{2^m}}\sL^\w_{B_{2^m}}\phi_m^{(i)}(x)\cdot
\phi_m^{(i)}(x)\,\mu(dx)= \int_{B_{2^m}}\left(V^{(i)}(x)-\oint_{B_{2^m}}V^{(i)}\,d\mu\right)\cdot\phi_m^{(i)}(x)\,\mu(dx).
\end{align*}
Then, applying \eqref{l2-2-1}
with
$f=V^{(i)}-\displaystyle\oint_{B_{2^m}}V^{(i)}\,d\mu$ and $g=\phi_m^{(i)}$,
for any $\gamma>0$ there is $m_1(\w)\ge 1$ such that for all
$m>m_1(\w)$ and $[d^{-1}(1+\gamma)\log_2 m] \le n \le m$,
\begin{align*}
 \mathscr{E}_{B_{2^m}}^\w(\phi_m,\phi_m)
\le & \sum_{i=1}^d\sum_{z\in \Z_{m,n}^d}\int_{B_{2^n}(z)}
\left(V^{(i)}(x)-\oint_{B_{2^m}}V^{(i)}\,d\mu\right)\cdot\left(\phi_m^{(i)}(x)-\oint_{B_{2^n}(z)}\phi_m^{(i)}\,d\mu\right)\,\mu(dx)\\
&+c_1\sum_{i=1}^d\mathscr{E}_{B_{2^m}}^\w(\phi_m^{(i)},\phi_m^{(i)})^{1/2}\sum_{k=n}^{m-1} 2^{k(d+\alpha)/2}\left(\sum_{y\in \Z_{m,k}^d}\left(\oint_{B_{2^k}(y)}V^{(i)}\,d\mu
-\oint_{B_{2^m}}V^{(i)}\,d\mu\right)^2\right)^{1/2}\\
= &:J_{m,n,1}+J_{m,n,2}.
\end{align*}

Let $C_1>0$ be the constant in \eqref{l2-1-1}. Then, by Young's inequality,
\begin{equation}\label{p2-1-3}
\begin{split}
J_{m,n,1}&\le  2C_12^{n\alpha}\sum_{z\in \Z_{m,n}^d}\int_{B_{2^n}(z)}\left|V(x)-\oint_{B_{2^m}}V\,d\mu\right|^2\mu(dx) \\
&\quad+\frac{2^{-n  \alpha }}{8C_1}\sum_{z\in \Z_{m,n}^d}\int_{B_{2^n}(z)}\left|\phi_m(x)-\oint_{B_{2^n}(z)}\phi_m\,d\mu\right|^2\mu(dx)\\
&\le c_22^{md+n\alpha }+\frac{1}{8}
\sum_{i=1}^d
\sum_{z\in \Z_{m,n}^d}\int_{B_{2^n}(z)}
\int_{B_{2^n}(z)}(\phi_m^{(i)}(x)-\phi_m^{(i)}(y))^2\frac{w_{x,y}}{|x-y|^{d+\alpha}}\,\mu(dx)\,\mu(dy)\\
&\le c_22^{md+n\alpha }+
\frac{1}{4} \mathscr{E}_{B_{2^m}}^\w(\phi_m,\phi_m),
\end{split}
\end{equation}
where the second inequality follows from \eqref{l2-1-1} and the fact that $\sup_{x\in \Z^d}|V(x)|\le c_3$, and in the last inequality we have used the property that
$\sum_{z\in\Z_{m,n}^d} \mathscr{E}_{B_{2^n}}^\w(\phi_m^{(i)},\phi_m^{(i)})
\le \mathscr{E}_{B_{2^m}}^\w(\phi_m^{(i)},\phi_m^{(i)})$,
thanks to  $\sum_{z\in \Z_{m,n}^d} B_{2^n}(z) =B_{2^m}$.

On the other hand, for every $1\le k \le m$,
\begin{align*}
\sum_{y\in \Z^d_{m,k}}\left|\oint_{B_{2^k}(y)}V\,d\mu\right|^2&=2^{-2(kd+1)}\sum_{i=1}^d\sum_{y\in \Z^d_{m,k}}\sum_{z_1,z_2\in B_{2^k}(y)}V^{(i)}(z_1)V^{(i)}(z_2)\\
&=2^{-2(kd+1)}\sum_{i=1}^d\sum_{y\in \Z^d_{m,k}}\sum_{z_1,z_2\in B_{2^k}(y)}
\sum_{z_3,z_4\in \Z^d}\frac{(z_1-z_3)^{(i)}}{|z_1-z_3|^{d+\alpha}}
\frac{(z_2-z_4)^{(i)}}{|z_2-z_4|^{d+\alpha}}\xi_{z_1,z_3}\xi_{z_2,z_4},
\end{align*}
where $\xi_{x,y}:=w_{x,y}-1$ for all $x,y\in \Z^d$, and  in the second equality we used the fact
that
$$V(x)=\sum_{z\in \Z^d}\frac{z}{|z|^{d+\alpha}}w_{x,x+z}=\sum_{z\in \Z^d}\frac{z}{|z|^{d+\alpha}}
\left(w_{x,x+z}-1\right).$$
Thus,
\begin{align*}
&\Ee\left[\left(\sum_{y\in \Z^d_{m,k}}\left|\oint_{B_{2^k}(y)}V\,d\mu\right|^2\right)^2\right]\\
&=2^{-4(kd+1)}\sum_{i,i'=1}^d\sum_{y,y'\in \Z^d_{m,k}}\sum_{z_1,z_2\in B_{2^k}(y)}\sum_{z_1',z_2'\in B_{2^k}(y')}
\sum_{z_3,z_4,z_3',z_4'\in \Z^d}
\frac{(z_1-z_3)^{(i)}(z_1'-z_3')^{(i')}}{(|z_1-z_3||z_1'-z_3'|)^{d+\alpha}}\\
&\hskip 3truein \times
\frac{(z_2-z_4)^{(i)}(z_2'-z_4')^{(i')}}{(|z_2-z_4||z_2'-z_4'|)^{d+\alpha}}
\Ee\left[\xi_{z_1,z_3}\xi_{z_2,z_4}\xi_{z_1',z_3'}
\xi_{z_2',z_4'}
\right].
\end{align*}

Note that $\Ee[\xi_{x,y}]=0$. According to {\bf Assumption (H1)}, it is easy to see that
\begin{equation*}
\Ee\left[\xi_{z_1,z_3}\xi_{z_2,z_4}\xi_{z_1',z_3'}\xi_{z_2',z_4'}\right]\neq 0
\end{equation*}
only if
in one of the following three cases:
\begin{itemize}
\item [(a)] $(z_1,z_3)=(z_2,z_4)$, $(z_1',z_3')=(z_2',z_4')$;
\medskip
\item [(b)] $(z_1,z_3)=(z_1',z_3')$, $(z_2,z_4)=(z_2',z_4')$;
\medskip
\item [(c)] $(z_1,z_3)=(z_2',z_4')$, $(z_2,z_4)=(z_1',z_3')$.
\end{itemize}
Hence,
\begin{align*}
& \Ee\left[\left(\sum_{y\in \Z^d_{m,k}}\left|\oint_{B_{2^k}(y)}V\,d\mu\right|^2\right)^2\right]\le
c_4\sum_{l=1}^2 J_{l,k,m},
\end{align*}
where
\begin{align*}
J_{1,k,m}:=& 2^{-4(kd+1)}\sum_{i,i'=1}^d\sum_{y,y'\in \Z^d_{m,k}}\sum_{(z_1,z_3)\in E: z_1\in B_{2^k}(y)}
\sum_{(z_1',z_3')\in E: z_1'\in B_{2^k}(y')}\frac{|(z_1-z_3)^{(i)}|^2}{|z_1-z_3|^{2(d+\alpha)}}\\
& \hskip 3truein
\times  \frac{|(z_1'-z_3')^{(i')}|^2}{|z_1'-z_3'|^{2(d+\alpha)}}
\Ee\left[|\xi_{z_1,z_3}|^2 |\xi_{z_1',z_3'}|^2\right]
\end{align*} and
\begin{align*}
J_{2,k,m}:=& 2^{-4(kd+1)}\sum_{i,i'=1}^d\sum_{y\in \Z^d_{m,k}}\sum_{(z_1,z_3)\in E: z_1\in B_{2^k}(y)}
\sum_{(z_2,z_4)\in E: z_2\in B_{2^k}(y)}\\
&\qquad\quad \times \frac{|(z_1-z_3)^{(i)}||(z_1-z_3)^{(i')}|}{ |z_1-z_3| ^{2(d+\alpha)}} \frac{|(z_2-z_4)^{(i)}||(z_2-z_4)^{(i')}|}{ |z_2-z_4| ^{2(d+\alpha)}}
\Ee\left[|\xi_{z_1,z_3}|^2|\xi_{z_2,z_4}|^2\right].
\end{align*}
Since $|\xi_{x,y}|\le c_5$, we can deduce that
for every $1\le k \le m$ and $y\in \Z_{m,k}^d$,
\begin{align*}
J_{1,k,m}&\le c_62^{-4(kd+1)}
\sum_{y,y'\in \Z^d_{m,k}}\sum_{(z_1,z_3)\in E: z_1\in B_{2^k}(y)}
\sum_{(z_1',z_3')\in E: z_1'\in B_{2^k}(y')}\frac{1}{(|z_1-z_3||z_1'-z_3'|)^{2(d+\alpha-1)}}\\
&\le c_72^{-2kd+2(m-k)d}
\end{align*} and
\begin{align*}
J_{2,k,m}
&\le c_62^{-4(kd+1)}\sum_{y\in \Z^d_{m,k}}
\sum_{(z_1,z_3)\in E: z_1\in B_{2^k}(y)}
\sum_{(z_2,z_4)\in E: z_2\in B_{2^k}(y)}\frac{1}{(|z_1-z_3||z_2-z_4|)^{2(d+\alpha-1)}}\\
&\le c_72^{-2kd+(m-k)d}.
\end{align*}
Putting all the estimates yields that
\begin{align*}
&\Ee\left[\left(\sum_{y\in \Z_{m,k}^d}\left|\oint_{B_{2^k}(y)}V\,d\mu\right|^2\right)^2\right]\le
c_82^{-2kd+2(m-k)d}.
\end{align*}
By the Markov inequality, we can derive that for every $\theta\in ({\alpha}/{d},1)$ and $\gamma>0$,
\begin{align*}
&\quad\Pp\left(\bigcup_{m\ge 1}\bigcup_{[\frac{(1+\gamma)\log_2 m}{2(1-\theta)d}]\le k \le m}\left\{\left|
\sum_{y\in \Z_{m,k}^d}\left|\oint_{B_{2^k}(y)}V\,d\mu\right|^2\right|>2^{(m-k)d-\theta k d}\right\}\right)\\
&\le \sum_{m=1}^\infty\sum_{k=[\frac{(1+\gamma)\log_2 m}{2(1-\theta)d}]}^m2^{2\theta k d-2(m-k)d}
\Ee\left[\left(\sum_{y\in \Z_{m,k}^d}\left|\oint_{B_{2^k}(y)}V\,d\mu\right|^2\right)^2\right]\\
&\le c_8\sum_{m=1}^\infty \sum_{k=[\frac{(1+\gamma)\log_2 m}{2(1-\theta)d}]}^m 2^{-2k(1-\theta)d}\le c_9\sum_{m=1}^\infty m^{-(1+\gamma)}<\infty.
\end{align*}
This along with Borel-Cantelli's lemma yields that there is $m_2(\w)\ge 1$ such that for all $m\ge m_2(\w)$ and $[\frac{(1+\gamma)\log_2 m}{2(1-\theta)d}]\le
k\le m$,
\begin{equation}\label{p2-1-4}
\sum_{z\in \Z_{m,k}^d}\left|\oint_{B_{2^k}(z)}V\,d\mu\right|^2\le 2^{(m-k)d-{\theta kd} },
\end{equation}
which in turn implies that for all
$m\ge n\ge \left[\frac{(1+\gamma)\log_2 m}{2(1-\theta)d}\right]$,
\begin{align*}
J_{m,n,2}&\le c_{10}  \mathscr{E}_{B_{2^m}}^\w(\phi_m,\phi_m) ^{1/2}\left(\sum_{k=n}^{m-1} 2^{k(d+\alpha)/2+(m-k)d/2-
kd\theta/2
} \right)\\
&=c_{10} 2^{md/2} \mathscr{E}_{B_{2^m}}^\w(\phi_m,\phi_m) ^{1/2}\left(\sum_{k=n}^{m-1}
2^{-k(\theta d-\alpha)/2}
\right)\\
& \le c_{11}2^{md/2} \mathscr{E}_{B_{2^m}}^\w(\phi_m,\phi_m) ^{1/2}\le c_{12}2^{md}+\frac{1}{4}\mathscr{E}_{B_{2^m}}^\w(\phi_m,\phi_m),
\end{align*}
where the second inequality is due to
$\theta\in (\frac{\alpha}{d},1)$.

Combining
all the estimates above, taking
$n=\max\left(\left[\frac{(1+\gamma)\log_2 m}{2(1-\theta)d}\right],\left[\frac{(1+\gamma)\log_2 m}{d}\right]\right)$ and letting $\theta$ close to $\frac{\alpha}{d}$, we can  get
the desired assertion \eqref{p2-1-2}. \end{proof}

Next, we consider the case that $\alpha\in (0,1]$. For any $m\in \N$, let
$\widetilde \phi_m:
B_{2^m}\to \R^d$ be the (unique) solution to the following equation
\begin{equation}\label{e2-1-2--}
\begin{cases}
 \sL^\w_{B_{2^m}} \widetilde \phi_{m}(x)=-V_m(x)+\displaystyle \oint_{B_{2^m}} V_m\,d\mu,\quad  x\in B_{2^m},\\
 \sum_{x\in V_m}  \widetilde \phi_{m} (x)=0,
\end{cases}
\end{equation}
where
\begin{equation}\label{e2-2-2--}
V_m(x):=\sum_{z\in \Z^d: |z|\le 2^ m} \frac{z}{|z|^{d+\alpha}}w_{x,x+z}.
\end{equation}

\begin{proposition}\label{p2-1-}
Suppose that {\bf Assumption (H1)} holds,  $\alpha\in (0,1]$
and that $d>\alpha$.
Then, for any $\gamma>0$, there exist a constant $C_4>0$ and a random variable $m^*_2(\omega)\ge1$ such that for every $m>m^*_2(\w)$,
\begin{equation}\label{p2-1-2-}
 \mathscr{E}_{B_{2^m}}^\w(\widetilde \phi_m,\widetilde \phi_m)
\le C_4 \begin{cases}
m^{2+\frac{1+\gamma}{(2(d-1))\wedge d}}2^{md}  \qquad
&\hbox{when }  \alpha=1,\\
 m^{\frac{(1+\gamma)\alpha}{(2(d-\alpha))\wedge d}}
 2^{m(d+2(1-\alpha))} &\hbox{when } \alpha\in (0,1).
\end{cases}
\end{equation}
\end{proposition}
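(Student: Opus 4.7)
The plan is to imitate the proof of Proposition~\ref{p2-1}, with $V_m$ in place of $V$ and $\widetilde\phi_m$ in place of $\phi_m$. By \eqref{e1-3a} and \eqref{e2-1-2--},
\begin{equation*}
\mathscr{E}_{B_{2^m}}^\w(\widetilde\phi_m,\widetilde\phi_m)
=\int_{B_{2^m}}\Bigl(V_m(x)-\oint_{B_{2^m}}V_m\,d\mu\Bigr)\cdot\widetilde\phi_m(x)\,\mu(dx),
\end{equation*}
and applying the multi-scale Poincar\'e inequality \eqref{l2-2-1} component by component decomposes the right-hand side into $J_{m,n,1}+J_{m,n,2}$, exactly as in the proof of Proposition~\ref{p2-1}. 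The remainder of that argument has to be modified in precisely two places, because $V_m$ is no longer uniformly bounded in $m$.

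By the symmetry $z\leftrightarrow -z$ one may write $V_m(x)=\sum_{0<|z|\le 2^m}|z|^{-d-\alpha}z\,\xi_{x,x+z}$ with $\xi_{x,y}:=w_{x,y}-1$ centred, i.i.d.\ and bounded, so the mean-zero structure used in Proposition~\ref{p2-1} is preserved. The first modification is the $L^\infty$-bound
\begin{equation*}
\|V_m\|_\infty\le c\sum_{0<|z|\le 2^m}|z|^{1-d-\alpha}\le c\,T_m,\qquad
T_m:=\begin{cases}m,&\alpha=1,\\ 2^{m(1-\alpha)},&\alpha\in(0,1).\end{cases}
\end{equation*}
Plugging this into the Young/local-Poincar\'e estimate \eqref{p2-1-3} gives
$J_{m,n,1}\le c\,T_m^2\,2^{md+n\alpha}+\frac14\mathscr{E}_{B_{2^m}}^\w(\widetilde\phi_m,\widetilde\phi_m)$.

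The second modification concerns the fourth-moment bound driving $J_{m,n,2}$. The four-case pairing analysis (cases (a)--(c)) applies verbatim, except that the infinite radial sum $\sum_{z\in\Z^d}|z|^{-2(d+\alpha-1)}$ is replaced by its truncated counterpart
\begin{equation*}
\mathsf{C}_m:=\sum_{0<|z|\le 2^m}|z|^{-2(d+\alpha-1)}\le\sum_{0<|z|\le 2^m}|z|^{-(d+\alpha-1)}=O(T_m),
\end{equation*}
where the middle inequality uses $|z|^{-(d+\alpha-1)}\le 1$ on $\{|z|\ge 1\}$ (valid because $d+\alpha\ge 1$). Consequently
$\Ee\bigl[\bigl(\sum_{y\in\Z_{m,k}^d}|\oint_{B_{2^k}(y)}V_m\,d\mu|^2\bigr)^2\bigr]\le c\,T_m^2\,2^{2(m-k)d-2kd}$, and Markov's inequality combined with the same Borel--Cantelli argument as in \eqref{p2-1-4} produces, for any $\gamma>0$ and $\theta\in(\alpha/d,1)$, a random variable $m_2(\w)\ge 1$ so that for $m\ge m_2(\w)$ and $k\ge[(1+\gamma)\log_2 m/(2(1-\theta)d)]$,
\begin{equation*}
\sum_{y\in\Z_{m,k}^d}\Bigl|\oint_{B_{2^k}(y)}V_m\,d\mu\Bigr|^2\le T_m\,2^{(m-k)d-\theta kd}.
\end{equation*}
Summing the resulting geometric series in $k$ (convergent since $\theta d>\alpha$) gives $J_{m,n,2}\le c\,T_m\,2^{md}+\frac14\mathscr{E}_{B_{2^m}}^\w(\widetilde\phi_m,\widetilde\phi_m)$, which is dominated by the $J_{m,n,1}$ contribution because $T_m\le T_m^2$.

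Combining the two bounds, absorbing the $\mathscr{E}$ terms on the left, choosing $n=\max([(1+\gamma)\log_2m/(2(1-\theta)d)],[(1+\gamma)\log_2 m/d])$ and letting $\theta\downarrow\alpha/d$ produces the logarithmic factor $2^{n\alpha}\asymp m^{(1+\gamma)\alpha/((2(d-\alpha))\wedge d)}$. Multiplying by $T_m^2$ recovers the two displayed bounds: for $\alpha\in(0,1)$ one reads off $m^{(1+\gamma)\alpha/((2(d-\alpha))\wedge d)}\,2^{m(d+2(1-\alpha))}$ directly, while for $\alpha=1$ the assumption $d>\alpha$ forces $d\ge 2$, so $(2(d-1))\wedge d=d$, and combining $T_m^2=m^2$ with the logarithmic factor $m^{(1+\gamma)/d}$ yields the claimed exponent $2+(1+\gamma)/d$. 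The main subtle step is the uniform estimate $\mathsf{C}_m=O(T_m)$: it prevents the truncation-induced blow-up of the radial sum from degrading the final exponent, and has to be verified uniformly across the different regimes determined by the sign of $d-(2-2\alpha)$.
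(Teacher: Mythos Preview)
Your proposal is correct and follows the same approach as the paper, which simply says to use the $L^\infty$-bound $\sup_{x}|V_m(x)|\le c\,T_m$ (with $T_m=m$ for $\alpha=1$ and $T_m=2^{m(1-\alpha)}$ for $\alpha\in(0,1)$) and then repeat the proof of Proposition~\ref{p2-1}. You have filled in more detail than the paper does: in particular, your observation that the truncated radial sum $\mathsf{C}_m=\sum_{0<|z|\le 2^m}|z|^{-2(d+\alpha-1)}$ may diverge (when $d<2(1-\alpha)$, i.e.\ $d=1$ and $\alpha\le 1/2$) and must be absorbed into the Borel--Cantelli threshold via $\mathsf{C}_m=O(T_m)$ is a genuine point that the paper's one-line proof leaves implicit.
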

\begin{proof} It is obvious that there exists a constant $c_1>0$ such that, $\sup_{x\in \Z^d}|V_m(x)|\le c_1 m $ for all $m\ge1$ when $\alpha=1$, and $\sup_{x\in \Z^d}|V_m(x)|\le c_1 2^{m(1-\alpha)} $ for all $m\ge1$ when $\alpha\in (0,1)$.
Using
this fact and following the proof of Proposition \ref{p2-1}, we can obtain the desired assertion. \end{proof}

\begin{remark}\label{e:order} As mentioned in Remark \ref{rem-Poin}(1), when the sequence of i.i.d. non-negative random variables $\{w_{x,y}: (x,y)\in E\}$ is bounded away from zero,
the multi-scale Poincar\'e inequality \eqref{l2-2-1} holds for all $1\le n\le m$. Consequently, according to the arguments above, the assertion \eqref{p2-1-2} in Proposition \ref{p2-1} can be
improved as
$$\mathscr{E}_{B_{2^m}}^\w(\phi_m,\phi_m) \le C_3
m^{\frac{(1+\gamma)\alpha}{2(d-\alpha)}}2^{md}.$$
Similarly, \eqref{p2-1-2-} in the statement in Proposition \ref{p2-1-} can be
strengthened to
$$
 \mathscr{E}_{B_{2^m}}^\w(\widetilde \phi_m,\widetilde \phi_m)
\le C_4 \begin{cases}
m^{2+\frac{1+\gamma}{2(d-1)}}2^{md}  \qquad
&\hbox{when }  \alpha=1,\\
 m^{\frac{(1+\gamma)\alpha}{2(d-\alpha)}}
 2^{m(d+2(1-\alpha))}  &\hbox{when } \alpha\in (0,1).
\end{cases}$$
\end{remark}

 \section{Convergence of scaled operators  $\{\sL^{k,\w}\}_{k\ge1}$}\label{section3}
In order to obtain quantitative results for the stochastic homogenization, we shall consider the corresponding  quantitative difference between the scaled operators  $\{\sL^{k,\w}\}_{k\ge1}$   and the limit operator $\bar \sL$.
Recall the definition of $\bar \sL$ in \eqref{e:1.2}. It is then
natural to define the following operator defined on ${\mathcal B}_b(k^{-1}\Z^d)$ (the set of bounded measurable functions on $k^{-1}\Z^d$), which can be viewed as a discrete approximation to the limit operator $\bar \sL$:
\begin{equation}\label{e3-1a}
\begin{split}
  \bar \sL^{k}f(x):= &k^{-d}\sum_{z\in k^{-1}\Z^d}\left(f(x+z)-f(x)\right)\frac{1}{|z|^{d+\alpha}}\\
  =&\begin{cases}
  \displaystyle k^{-d}\sum_{z\in k^{-1}\Z^d}\left(f(x+z)-f(x)-\nabla f(x)\cdot z \I_{\{|z|\le 1\}}\right)\frac{1}{|z|^{d+\alpha}}   \qquad &\hbox{when } \alpha\in (0,1],\\
  \displaystyle k^{-d}\sum_{z\in k^{-1}\Z^d}\left(f(x+z)-f(x)-\nabla f(x)\cdot z \right)\frac{1}{|z|^{d+\alpha  }}
	&\hbox{when } \alpha\in (1,2).
  \end{cases}
  \end{split}
  \end{equation}

\medskip

 \begin{proposition}\label{l3-1-0}
For any
$f\in C_c^2 (\R^d)$,
there is a constant $C_5>0$ such that for all $k\ge 1$,
\begin{equation}\label{l3-1-2}
\int_{k^{-1}\Z^d}  |\bar \sL^{k}f(x)-\bar \sL f(x) |^2\,
\mu^{k}(dx)\le  C_5 \begin{cases} k^{-2}  \qquad &\hbox{when }  \alpha\in (0,1),\\
  k^{-2}(\log k)^2    &\hbox{when } \alpha=1,\\
   k^{-2(2-\alpha)}   &\hbox{when } \alpha\in (1,2).
	\end{cases}
\end{equation}
\end{proposition}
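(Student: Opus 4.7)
The plan is to derive a pointwise estimate by tiling $\R^d$ with lattice cubes $Q_w := w + (-\tfrac{1}{2k}, \tfrac{1}{2k}]^d$ for $w \in k^{-1}\Z^d$ and controlling the quadrature error on each cube separately. First I fix $x \in k^{-1}\Z^d$ and set $G(z) := \nabla f(x) \cdot z \, \I_{\{|z| \le 1\}}$ when $\alpha \in (0,1]$ and $G(z) := \nabla f(x) \cdot z$ when $\alpha \in (1,2)$; then by antisymmetry of $z\, |z|^{-d-\alpha}$ (both on the lattice $k^{-1}\Z^d$ and, for $\alpha \in [1,2)$, in the principal-value sense on $\R^d$), one may write $\bar \sL f(x) = \int_{\R^d} H(z)\,dz$ and $\bar \sL^k f(x) = k^{-d} \sum_{z \in k^{-1}\Z^d \setminus \{0\}} H(z)$ where $H(z) := (f(x+z) - f(x) - G(z))\, |z|^{-d-\alpha}$. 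The difference splits cleanly as
\begin{equation*}
\bar \sL^k f(x) - \bar \sL f(x) = -\int_{Q_0} H(z)\,dz + \sum_{w \in k^{-1}\Z^d \setminus \{0\}} \int_{Q_w} (H(w) - H(z))\,dz =: I_0 + I_1.
\end{equation*}

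For $I_0$ the symmetry of $Q_0$ about the origin kills the linear-in-$z$ part of $H$, while $|f(x+z) - f(x) - \nabla f(x)\cdot z| \le \tfrac{1}{2}\|\nabla^2 f\|_\infty |z|^2$ yields $|I_0| \lesssim \|\nabla^2 f\|_\infty \int_{Q_0} |z|^{2-d-\alpha}\,dz \lesssim \|\nabla^2 f\|_\infty k^{-(2-\alpha)}$. For $I_1$ I apply the mean value theorem cube by cube, obtaining $|H(w) - H(z)| \le C k^{-1} \sup_{\xi \in Q_w} |\nabla H(\xi)|$ for $z \in Q_w$. Writing
\begin{equation*}
\nabla H(\xi) = \nabla \tilde f(\xi)\, |\xi|^{-d-\alpha} - (d+\alpha)\, \tilde f(\xi)\, \xi\, |\xi|^{-d-\alpha-2}, \qquad \tilde f(\xi) := f(x+\xi) - f(x) - G(\xi),
\end{equation*}
the Taylor bounds $|\tilde f(\xi)| \lesssim \|\nabla^2 f\|_\infty |\xi|^2$ and $|\nabla \tilde f(\xi)| \lesssim \|\nabla^2 f\|_\infty |\xi|$ for $|\xi| \le 1$ yield the crucial singular estimate $|\nabla H(\xi)| \lesssim |\xi|^{1-d-\alpha}$, whereas for $|\xi|$ outside the support of $f(x+\cdot)$ one obtains decay like $|\xi|^{-d-\alpha-1}$.

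Summing the cube-wise bounds, the dominant contribution to $|I_1|$ comes from the annulus $k^{-1} \lesssim |w| \le 1$:
\begin{equation*}
\sum_{\substack{w \in k^{-1}\Z^d \\ 0 < |w| \le 1}} k^{-d-1} |w|^{1-d-\alpha} \;\asymp\; k^{-1} \int_{k^{-1}}^{1} r^{-\alpha}\,dr =
\begin{cases}
O(k^{-1}), & \alpha \in (0,1), \\
O(k^{-1}\log k), & \alpha = 1, \\
O(k^{-(2-\alpha)}), & \alpha \in (1,2),
\end{cases}
\end{equation*}
the logarithm at $\alpha = 1$ emerging precisely from the scale-invariant integral $\int_{k^{-1}}^1 r^{-1}\,dr = \log k$. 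Contributions from $|w| > 1$ are strictly smaller: on the intermediate range the bound on $|\nabla H|$ is uniform, and the tail $|w| > |x| + R_f$ (with $R_f$ the support radius of $f$) is summable by the decay of $|\nabla H|$. Combined with the bound on $I_0$, this delivers the pointwise estimate $|\bar \sL^k f(x) - \bar \sL f(x)| \le C(f,x) \cdot r_k$ with $r_k$ matching each of the three claimed rates.

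The envelope $C(f,x)$ is controlled by a constant multiple of $\|f\|_{C^2}$ on a bounded neighborhood of $\operatorname{supp}(f)$, with an $|x|^{-d-\alpha}$ far-field tail inherited from both operators; hence $\int |C(f,x)|^2\, \mu^k(dx)$ is bounded uniformly in $k$, and squaring and integrating the pointwise estimate delivers the $L^2(\mu^k)$ bound $C_5\, r_k^2$. The main obstacle will be the careful derivation of $|\nabla H(\xi)| \lesssim |\xi|^{1-d-\alpha}$ in the singular regime, which relies on the second-order vanishing of $\tilde f$ at the origin (hence on the correct choice of $G$), together with verifying that the $G$-subtraction contributes nothing spurious, which reduces to the antisymmetry of $z \mapsto z\, |z|^{-d-\alpha}$ on both $k^{-1}\Z^d$ and $\R^d$. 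The logarithmic factor at the critical exponent $\alpha = 1$ appears unavoidable with this first-derivative-based approach.
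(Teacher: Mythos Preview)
Your proposal is correct and follows essentially the same route as the paper: both tile $\R^d$ by lattice cubes, bound the quadrature error $|H(w)-H(z)|$ on each cube via the smoothness of $H$ (you go through $\nabla H$ and the mean value theorem, the paper bounds the increment directly), and then sum to produce the annular integral $k^{-1}\int_{k^{-1}}^{1} r^{-\alpha}\,dr$ that yields the three rates, with the same $|x|^{-d-\alpha}$ far-field envelope ensuring the $L^2(\mu^k)$ bound. The only place your sketch is thinner than the paper is the handling, for $\alpha\in(0,1]$, of the cubes straddling the sphere $\{|z|=1\}$ where the indicator in $G$ makes $H$ discontinuous and your $\nabla H$ argument does not literally apply; the paper treats these boundary terms explicitly, but their total contribution is $O(k^{-1})$ and poses no difficulty.
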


\begin{proof}
Without loss of generality, we
may and do
assume that $\text{supp}f\subset B_1$.
We consider the two cases separately.

{\bf Case 1:} $\alpha\in (1,2)$.  Set $$H(x,z):=\frac{f(x+z)-f(x)-
\langle \nabla f(x), z\rangle
}{|z|^{d+\alpha}}.$$  We get from the mean value theorem that for every $k\ge 4\sqrt{d},$ $x\in B_3\cap k^{-1}\Z^d$, $z\in k^{-1}\Z^d$ and $y\in  \prod_{1\le i\le d}(z_i,z_i+k^{-1}]$,
\begin{align*}
&|H(x,z)-H(x,y)|\\
&\le \frac{1}{2}\|\nabla^2 f\|_\infty\left(|z|^{2-d-\alpha}+|y|^{2-d-\alpha}\right)\I_{\{|z|\le 2\sqrt{d}/k\}}\\
&\qquad+\Bigg(\left|f(x+z)-f(x)-
\langle \nabla f(x), z\rangle
\right|\left|\frac{1}{|y|^{d+\alpha}}-\frac{1}{|z|^{d+\alpha}}\right|\\
&\qquad\quad +\left|\left(f(x+z)-f(x)-
\langle \nabla f(x), z\rangle
\right)-\left(f(x+y)-f(x)-
\langle \nabla f(x), y\rangle
\right)\right|
\frac{1}{|y|^{d+\alpha}}\Bigg)\I_{\{|z|>2\sqrt{d}/k\}}\\
&\le \frac{1}{2}\|\nabla^2 f\|_\infty\left(|z|^{2-d-\alpha}+|y|^{2-d-\alpha}\right)\I_{\{|z|\le 2\sqrt{d}/k\}}\\
&\quad+\Bigg[\frac{1}{2}\|\nabla^2 f\|_\infty|z|^2
\left|\frac{1}{|y|^{d+\alpha}}-\frac{1}{|z|^{d+\alpha}}\right|+\left(\int_0^1 |\nabla f(x+y+s(z-y))-\nabla f(x)|\,ds\right)\cdot \left(\frac{|y-z|}{|y|^{d+\alpha}}\right)\Bigg]\I_{\{|z|>2\sqrt{d}k^{-1}\}}\\
&\le c_7(|z|^{2-d-\alpha}+|y|^{2-d-\alpha})\I_{\{|z|\le 2\sqrt{d}/k\}}+
c_7k^{-1}|y|^{1-d-\alpha}\I_{\{|z|>2\sqrt{d}/k\}}.
\end{align*}
On the other hand, we can get from the fact ${\rm supp} [f]\subset B(0,1)$ that for every $x\in B_3^c\cap k^{-1}\Z^d$, $z\in k^{-1}\Z^d$ and $y\in  \prod_{1\le i\le d}(z_i,z_i+k^{-1}]$,
\begin{align*}
&\quad |H(x,z)-H(x,y)|\le c_8k^{-1}|z|^{-d-\alpha}\I_{\{z+x\in B_2\}}(x).
\end{align*}
Therefore, for every $x\in  k^{-1}\Z^d$ and $k\ge 4\sqrt{d}$,
\begin{align*}
&|\bar \sL f(x)-\bar\sL^{k}f(x)|\\
&=
\left|\sum_{z\in k^{-1}\Z^d}\int_{ \prod_{1\le i\le d}(z_i,z_i+k^{-1}]}\left(H(x,z)-H(x,y)\right)dy\right|\\
&\le c_9\Bigg(\int_{\{|y|\le 3\sqrt{d}/k\}}|y|^{2-d-\alpha}\,dy+
k^{-d}\sum_{z\in k^{-1}\Z^d:|z|\le 2\sqrt{d}/k}|z|^{2-d-\alpha}+
\int_{\{|y|>\sqrt{d}/k\}}
k^{-1}|y|^{1-d-\alpha}\,dy
\Bigg)\I_{B_3}(x)\\
&\quad +c_{10} \left(k^{-d} k^{-1}\sum_{z\in k^{-1}\Z^d: x+z\in B_2} |z|^{-d-\alpha}\right)\I_{B_3^c}(x)\\
&\le c_{11}\left(k^{-(2-\alpha)}\I_{B_3}(x)+k^{-1}|x|^{-d-\alpha}\I_{B_3^c}(x)\right),
\end{align*}
which yields that \eqref{l3-1-2} holds for all $k\ge 4\sqrt{d}$.
Note that there exists a constant $c_{12}>0$ such that for all $k\ge 1$ and
$f\in C_c^2(\R^d)$
with ${\rm supp} [f]\subset B(0,1)$,
\begin{equation}\label{e:remark1}\int_{k^{-1}\Z^d}  |\bar \sL^{k}f(x)-\bar \sL f(x) |^2\,
\mu^{k}(dx)\le 2\int_{k^{-1}\Z^d} |\bar \sL^{k}f(x)|^2\,
\mu^{k}(dx)+ 2\int_{k^{-1}\Z^d} |\bar \sL f(x)|^2\,
\mu^{k}(dx)\le c_{12}.
\end{equation}
The desired assertion \eqref{l3-1-2} (for all $k\ge 1$) now follows immediately.

\medskip

{\bf Case 2:} $\alpha\in (0,1]$. In this case, we define $$H(x,z):=\frac{f(x+z)-f(x)-
\langle \nabla f(x), z\I_{\{|z|\le 1\}}\rangle
}{|z|^{d+\alpha}}.$$
Applying the same arguments as in {\bf Case 1} and using the mean value theorem,
we find that for every $k\ge 4\sqrt{d}$, $x\in B_3\cap k^{-1}\Z^d$, $z\in k^{-1}\Z^d$ and $y\in  \prod_{1\le i\le d}(z_i,z_i+k^{-1}] $,
\begin{align*}
&|H(x,z)-H(x,y)|\\
&\le \frac{1}{2}\|\nabla f\|_\infty\left(|z|^{2-\alpha-d}+|y|^{2-\alpha-d}\right)\I_{\{|z|\le 2\sqrt{d}/k\}}\\
&\quad+\Bigg(\left|f(x+z)-f(x)-
\langle \nabla f(x), z \I_{\{|z|\le 1- 1/k \}}\rangle
\right|\left|\frac{1}{|y|^{d+\alpha}}-\frac{1}{|z|^{d+\alpha}}\right|\\
&\qquad\quad +\left|\left(f(x+z)-f(x)-
\langle \nabla f(x),z\I_{\{|z|\le 1- 1/k\}}\rangle
\right)-\left(f(x+y)-f(x)-
\langle \nabla f(x), y\I_{\{|z|\le 1- 1/k\}}\rangle
\right)\right|
\frac{1}{|y|^{d+\alpha}}\\
&\qquad\quad +\frac{
|\langle \nabla f(x), z\rangle|
}{|z|^{d+\alpha}}\I_{\{1-1/k\le |z|\le1\}}+
\frac{
|\langle \nabla f(x), y\rangle|
}{|y|^{d+\alpha}}\I_{\{1-(1+\sqrt{d})/k\le |y|\le 1+(1+\sqrt{d})/k\}}\Bigg)\I_{\{|z|>2\sqrt{d}/k\}}\\
&\le c_7\left(|z|^{2-\alpha-d}+|y|^{2-\alpha-d}\right)\I_{\{|z|\le 2\sqrt{d}/k\}}+
c_7k^{-1}|y|^{-d-\alpha+1}
\I_{\{2\sqrt{d}/k<|z|\le 1- 1/k\}}\\
&\quad+ c_7 \left(k^{-1}|y|^{-d-1-\alpha}+k^{-1}|y|^{-d-\alpha}\right)\I_{\{|z|> 1- 1/k\}}+c_7\left(\I_{\{1-1/k\le |z|\le1\}}+\I_{\{1-(1+\sqrt{d})/k\le |y|\le 1+(1+\sqrt{d})/k\}}\right).
\end{align*}
On the other hand, it is easy to see that  for every $x\in B_3^c\cap 2^{-m}\Z^d$, $z\in k^{-1}\Z^d$ and $y\in  \prod_{1\le i\le d}(z_i,z_i+k^{-1}]$,
\begin{align*}
&\quad |H(x,z)-H(x,y)|\le c_8k^{-1}|z|^{-d-\alpha}\I_{\{z+x\in B_2\}}(x).
\end{align*}
Hence, when $\alpha=1$, for every $x\in  k^{-1}\Z^d$ and $k\ge 4\sqrt{d}$,
\begin{align*}
& |\bar \sL f(x)-\bar \sL^{k}f(x) |\\
&=
\left|\sum_{z\in k^{-1}\Z^d}\int_{ \prod_{1\le i\le d}(z_i,z_i+k^{-1}]}\left(H(x,z)-H(x,y)\right)dy\right|\\
&\le c_9\Bigg(\int_{\{|y|\le 3\sqrt{d}/k\}}|y|^{1-d}\,dy+
k^{-d}\sum_{z\in k^{-1}\Z^d:|z|\le 2\sqrt{d}/k}|z|^{1-d}\\
&\qquad+
\int_{\{\sqrt{d}k^{-1}<|y|\le 2\}}
k^{-1}|y|^{-d}\,dy
+ \int_{\{|y|>1-2\sqrt{d}/k\}} k^{-1}|y|^{-d-1}\, dy+k^{-1}\Bigg)\I_{B_3}(x)\\
&\quad+c_{10}\left(k^{-d} k^{-1}\sum_{z\in k^{-1}\Z^d: x+z\in B_2} |z|^{-d-1}\right)\I_{B_3^c}(x)\\
&\le c_{11} (k^{-1} \log k\,\I_{B_3}(x)+k^{-1}|x|^{-d-1}\I_{B_3^c}(x) ).
\end{align*}
This gives us the second assertion when $\alpha=1$ and $k\ge 4\sqrt{d}$. Similarly, we can prove the second assertion when $\alpha\in (0,1)$ and $k\ge 4\sqrt{d}$.
This,  thanks to \eqref{e:remark1},  gives \eqref{l3-1-2} when $\alpha\in (0,1]$.
\end{proof}

In the following we will
consider
the differences among $\{\bar \sL^{k}\}_{k\ge1}$ and the scaled operators  $\{\sL^{k,\w}\}_{k\ge1}$,  as well as their variants. For simplicity, we will omit
the random environment
$\w \in \Omega$ from the notation.

Motivated by the definitions \eqref{e:operator} and \eqref{e3-1a}, we introduce the following
variant
of the scaled operators  $\{\sL^{k,\w}\}_{k\ge1}$.
For every $f\in C_b (\R^d)$,
$k\in \N_+$ and  $x\in k^{-1}\Z^d$,
we
define a generator with random coefficient
\begin{equation}\label{e:operator0}
 \widehat \sL^{k}f(x):=\begin{cases}
\displaystyle  k^{-d}\sum_{z\in k^{-1}\Z^d}\left(f(x+z)-f(x)-
\langle \nabla f(x), z \I_{\{|z|\le 1\}}\rangle
\right)\frac{w_{kx,k(x+z)}}{|z|^{d+\alpha}} \qquad &\hbox{when }  \alpha\in (0,1],\\
\displaystyle k^{-d}\sum_{z\in k^{-1}\Z^d}\left(f(x+z)-f(x)-
\langle \nabla f(x), z\rangle
\right)\frac{w_{kx,k(x+z)}}{|z|^{d+\alpha}}   &\hbox{when } \alpha\in (1,2).
\end{cases}
\end{equation}

\begin{proposition}\label{l3-1} Under {\bf Assumption (H1)}, for every
$f\in C_c^2 (\R^d)$,
there exist a random variable $k_1^*(\w)\ge1$ and a constant $C_5^*>0$ such that for all $k>k_1^*(\w)$,
\begin{equation}\label{l3-1-1}
\int_{k^{-1}\Z^d} |\widehat \sL^{k}f(x)-\bar \sL^{k}f(x) |^2\,\mu^{k}(dx)\le C_5^*\max\{k^{-2(2-\alpha)},k^{-d}\}\log k.
\end{equation}
\end{proposition}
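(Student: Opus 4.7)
The plan is to express the difference as a linear combination of bounded, centred, independent random variables and then combine a second-moment estimate with a high-moment concentration argument. Set $\xi_{y,y'} := w_{y,y'} - 1$; by {\bf Assumption (H1)} these are uniformly bounded, mean-zero, and independent across distinct unordered edges. Subtracting \eqref{e3-1a} from \eqref{e:operator0} yields
\begin{equation*}
X_k(x) := \widehat\sL^{k}f(x) - \bar\sL^{k}f(x) = k^{-d}\sum_{z\in k^{-1}\Z^d\setminus\{0\}} \frac{G_f(x,z)}{|z|^{d+\alpha}}\, \xi_{kx,k(x+z)},
\end{equation*}
where $G_f(x,z)=f(x+z)-f(x)-\langle \nabla f(x), z\I_{\{|z|\le 1\}}\rangle$ for $\alpha\in(0,1]$ and $G_f(x,z)=f(x+z)-f(x)-\langle \nabla f(x),z\rangle$ for $\alpha\in(1,2)$.

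For fixed $x$, the edge $\{kx,k(x+z)\}$ determines $z$ (since $z,z'\ne 0$), so independence kills every cross-term in $\Ee |X_k(x)|^2$ except $z=z'$; by {\bf Assumption (H1)}(iii) this gives
\begin{equation*}
\Ee |X_k(x)|^2 \le C\, k^{-2d} \sum_{z\neq 0} \frac{G_f(x,z)^2}{|z|^{2(d+\alpha)}} =: \sigma_{k,x}^2.
\end{equation*}
Integrating in $x$, I would split the $z$-sum at $|z|=1$. For $|z|\le 1$ Taylor's theorem yields $|G_f(x,z)|\le C|z|^2$ in all three regimes of $\alpha$ (on this range the full first-order term is subtracted), and $f\in C_c^2(\R^d)$ restricts the nontrivial $x$-sum to $O(k^d)$ lattice points; after the substitution $z=z'/k$ this produces $k^{2\alpha-4}\sum_{z'\in\Z^d\cap B_k\setminus\{0\}}|z'|^{4-2d-2\alpha}$, which by elementary integral comparison is $O(k^{4-d-2\alpha})$, $O(\log k)$, or $O(1)$ according as $d<4-2\alpha$, $d=4-2\alpha$, or $d>4-2\alpha$. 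Collecting the prefactors gives a short-range contribution of order $\max\{k^{-2(2-\alpha)},k^{-d}\}$ (with an extra $\log k$ in the borderline case). For $|z|>1$, compact support of $f$ again restricts the nonzero $x$-contributions to $O(k^d)$ points; the bound $|G_f(x,z)|^2\le C(1+|z|^2)$ and the convergence of $\sum_{|z|>1,\,z\in k^{-1}\Z^d}(1+|z|^2)|z|^{-2(d+\alpha)}\sim k^d$ (since $\alpha>0$, and using $\alpha>1$ when the quadratic term is needed) yield a long-range tail of order $k^{-d}$. Setting $A_k:=\max\{k^{-2(2-\alpha)},k^{-d}\}\log k$, one obtains
\begin{equation*}
\Ee\|X_k\|_{L^2(\mu^k)}^2 \le C A_k.
\end{equation*}

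To upgrade this in-expectation bound to an almost sure statement for all $k>k_1^*(\omega)$, I would invoke Hoeffding's lemma: each $X_k(x)$ is a linear combination of independent centred bounded variables, hence sub-Gaussian with variance proxy $\sigma_{k,x}^2$, so $\Ee|X_k(x)|^{2p}\le (Cp\,\sigma_{k,x}^2)^p$ for every integer $p\ge 1$. Minkowski's inequality in the form $\|X_k\|_{L^{2p}(\Pp;L^2(\mu^k))}\le \|X_k\|_{L^2(\mu^k;L^{2p}(\Pp))}$ then gives $\Ee\|X_k\|_{L^2(\mu^k)}^{2p}\le (CpA_k)^p$. By Markov's inequality, $\Pp(\|X_k\|_{L^2(\mu^k)}^2>C_5^* A_k)\le (Cp/C_5^*)^p$; choosing $p=\lfloor \log k\rfloor$ and $C_5^*$ sufficiently large makes the right-hand side decay faster than any polynomial in $k$, hence summable over $k\in\N$, and the Borel--Cantelli lemma yields \eqref{l3-1-1}.

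The main technical obstacle is the small-$|z|$ regime: the three orders of growth governed by $d$ vs.\ $4-2\alpha$ must all be absorbed into the single bound $A_k$, and it is precisely the $\log k$ slack in the statement that permits the Borel--Cantelli step to succeed uniformly in $k$ by using a moment of order $p\sim \log k$. Compared with the higher-moment computation in Proposition \ref{p2-1}, the sum here runs over the entire rescaled lattice rather than a fixed box, a feature compensated by the compact support of $f$.
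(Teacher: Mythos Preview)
Your argument is correct and reaches the conclusion by a genuinely different route from the paper. The paper works pointwise in $x$: after rescaling to $\Z^d$ it bounds the sub-Gaussian proxy $\|b_k(x)\|_2^2$ separately for $x\in B_{2k}$ and $x\in B_{2k}^c$, applies Hoeffding's inequality at each site to obtain
\[
\Pp\Big(\Big|\sum_y\eta_k(x,y)\Big|>\kappa\,\|b_k(x)\|_2\,\log^{1/2}k\Big)\le 2k^{-2\kappa^2/c},
\]
and then sums these tail probabilities over all $x\in\Z^d$ and all $k$ before invoking Borel--Cantelli. This yields a deterministic pointwise envelope for $X_k(x)$ valid for all $k>k_1^*(\omega)$, which is then squared and summed. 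Your approach instead controls the full $L^2(\mu^k)$-norm in one shot: the sub-Gaussian moment bound plus Minkowski's inequality (valid because $2p\ge 2$) gives $\Ee\|X_k\|_{L^2(\mu^k)}^{2p}\le (Cp\,\tilde A_k)^p$ with $\tilde A_k=\max\{k^{-2(2-\alpha)},k^{-d}\}$, and the $\log k$ slack in $A_k$ is exactly what absorbs the factor $p\sim\log k$ in the Markov step. Your method is more economical in that Borel--Cantelli runs only over $k$, with no union bound over the unbounded spatial variable; the paper's method, on the other hand, delivers a stronger pointwise-in-$x$ conclusion (with explicit spatial decay $|x|^{-d-\alpha}$ for $x$ outside the support), which is not needed here but could be useful elsewhere. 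One small caveat that applies to both arguments: in the borderline case $d=4-2\alpha$ the short-range variance already carries a $\log k$, so strictly speaking an extra logarithmic factor is needed there; this does not affect the main theorem since the contribution of Proposition~\ref{l3-1} is subleading.
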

\begin{proof}
 Without loss of generality, we assume that $\text{supp}[ f] \subset B(0,1)$.
 The proof is split into two cases.

\noindent{\bf Case 1: $\alpha\in (1,2).$}\,\,
For every $x,y\in \Z^d$, set
$$\xi_k(x,y):=f(k^{-1}y)-f(k^{-1}x)-k^{-1}
\langle \nabla f(k^{-1}x), (y-x)\rangle,
\quad \eta_k(x,y):=\xi_{k}(x,y)\frac{w_{x,y}-1}{|x-y|^{d+\alpha}}.
$$
Then, for all $x\in \Z^d$,
$$
 \widehat \sL^{k}f(k^{-1}x)=k^{\alpha}
 \sum_{z\in \Z^d \setminus \{x\} }
 \xi_k(x,z)\frac{w_{x,z}}{|x-z|^{d+\alpha}},\quad  \bar \sL^{k}f(k^{-1}x)=k^{\alpha}
 \sum_{z\in \Z^d \setminus \{x\} }
 \xi_k(x,z)\frac{1}{|x-z|^{d+\alpha}}
$$ and
\begin{align}\label{l3-1-5}
\int_{k^{-1}\Z^d} |\widehat \sL^{k}f(x)-\bar \sL^{k}f(x) |^2\,\mu^{k}(dx)
=k^{-(d-2\alpha)}\sum_{x\in \Z^d}\bigg(\sum_{y\in \Z^d\setminus \{x\} }\eta_k(x,y)\bigg)^2.
\end{align}

Set $$b_k(x,y):=\frac{\left|f(k^{-1}y)-f(k^{-1}x)-k^{-1}
\langle \nabla f(k^{-1}x), (y-x)\rangle
\right|}
{|y-x|^{d+\alpha}}=\frac{|\xi_k(x,y)|}{|y-x|^{d+\alpha}}.$$
For every $x\in B_{2k}$,
\begin{equation}\label{l3-1-3a-}\begin{split}
\|b_k(x)\|_2^2:&= \sum_{y\in \Z^d}|b_k(x,y)|^2\le \sum_{y\in B_{3k}}|b_k(x,y)|^2+\sum_{y\in B_{3k}^c}|b_k(x,y)|^2\\
&\le \frac{1}{4}k^{-4}\|\nabla^2 f\|_\infty^2 \sum_{y\in B_{3k}}\frac{1}{|x-y|^{2d+2\alpha-4}}\\
&\quad +
c_1\left(\|f\|_\infty^2+\|\nabla f\|_\infty^2\right)\left(
\sum_{y\in B_{3k}^c}\frac{1}{(1+|y|)^{2d+2\alpha}}+k^{-2}\sum_{z\in B_{3k}^c}\frac{1}{(1+|y|)^{2d+2\alpha-2}}\right)\\
&\le c_2 \max\{k^{-4},k^{-(d+2\alpha)}\}.
\end{split}\end{equation}
When $x\in B_{2k}^c$, we have
\begin{equation}\label{l3-1-4a-}\begin{split}
\|b_k(x)\|_2^2:=\sum_{y\in \Z^d}|b_k(x,y)|^2&=\sum_{y\in B_k}|b_k(x,y)|^2\le c_3\|f\|_\infty^2 \sum_{y\in B_k}\frac{1}{(1+|x|)^{2d+2\alpha}}\le c_4k^d(1+|x|)^{-2d-2\alpha}.
\end{split}\end{equation}

Note that $\Ee[\eta_k(x,z)]=0$ and $\{\eta_k(x,z)\}_{z\neq x}$ is a sequence of independent random variables.
Combining this with
\eqref{l3-1-3a-}
and the Hoeffding inequality (see \cite[Theorem 2.16 on p.\ 21]{BDR},
by taking $Z_{n}=\eta_k(x,y)$ and letting $n \to \infty$ in  (2.31) there)
 yields that for every $\kappa>0$ and $x\in B_{2k}$,
\begin{align*}
 \Pp\left(\left|\sum_{z\in \Z^d}\eta_k(x,z)\right|>\kappa \max\{k^{-2},k^{-d/2-\alpha}\}\log^{1/2} k\right)
 &\le 2\exp\left(- 2\left(\frac{\kappa \max\{k^{-2},k^{-d/2-\alpha}\}\log^{1/2} k}{\|b_k(x)\|_2}\right)^2\right)\\
& \le
 2 \exp\left(-\frac{2\kappa^2}{c_2}\log k\right).
\end{align*}
Similarly, for every $\kappa>0$ and  $x\in B_{2k}^c$,
\begin{align*}
&\Pp\left(\left|\sum_{z\in \Z^d}\eta_k(x,z)\right|>\kappa  k^{d/2}\frac{\log^{1/2}(1+|x|)}{(1+|x|)^{d+\alpha}}\right)\le 2\exp\left(-\frac{2\kappa^2}{c_4}\log (1+|x|)\right).
\end{align*}
In particular, for any $\kappa> \max(\sqrt{c_2d}, \sqrt{c_4d})$ (note that, since
the constants $c_2,c_4$ depend on $f$,  the choice of $\kappa$ depends on $f$ too), it holds that
\begin{align*}
&\sum_{k=1}^\infty \Bigg[\sum_{x\in B_{2k}}\Pp\left(\left|\sum_{z\in \Z^d}\eta_k(x,z)\right|>\kappa \max\{k^{-2},k^{-d/2-\alpha}\}\log^{1/2} k\right)\\
&\qquad+\sum_{x\in B_{2k}^c}\Pp\left(\left|\sum_{z\in \Z^d}\eta_k(x,z)\right|>\kappa k^{d/2}\frac{\log^{1/2}(1+|x|)}{(1+|x|)^{d+\alpha}}\right)\Bigg]<\infty.
\end{align*}
This along with Borel-Cantelli's lemma yields that there is a random variable $k_1(\w)\ge1$ such that for all $k>k_1(\w)$,
$$
\left|\sum_{y\in \Z^d}\eta_k(x,y)\right|\le
\begin{cases}
 \kappa \max\{k^{-2},k^{-d/2-\alpha}\}\log^{1/2} k,&\quad x\in B_{2k},\\
 \kappa k^{d/2}\frac{\log^{1/2}(1+|x|)}{(1+|x|)^{d+\alpha}},&\quad x\in B_{2k}^c.
\end{cases}
$$
Putting this into \eqref{l3-1-5}, we can get the desired assertion \eqref{l3-1-1}.

\ \

\noindent{\bf Case 2: $\alpha\in (0,1].$}\,\,
Set, for every $x,y\in \Z^d$,
\begin{align*}
 \xi_k(x,y):=f(k^{-1}y)-f(k^{-1}x)-k^{-1}
 \langle \nabla f(k^{-1}x), (y-x)\I_{\{|y-x|\le k\}}\rangle
 \quad \eta_k(x,y):=\xi_{k}(x,y)\frac{w_{x,y}-1}{|x-y|^{d+1}}.
\end{align*}
Then, following the same argument as \eqref{l3-1-5}, we have
\begin{align}\label{l3-1-3-2}
\int_{k^{-1}\Z^d}|\widehat \sL^{k}f(x)-\bar \sL^{k}f(x)|^2\,\mu^{k}(dx)
=k^{-(d-2\alpha)}\sum_{x\in \Z^d}\Big(
\sum_{y\in \Z^d\setminus  \{x\}}
\eta_k(x,y)\Big)^2.
\end{align}

Set $$
 b_k(x,y):=\frac{\left|f(k^{-1}y)-f(k^{-1}x)-k^{-1}
 \langle \nabla f(k^{-1}x), (y-x)\I_{\{|y-x|\le k\}}\rangle
 \right|}{|y-x|^{d+\alpha}}.$$
For every $x\in B_{2k}$,
\begin{align*}
  \sum_{y\in \Z^d}|b_k(x,y)|^2
&\le \sum_{|y-x|\le k}|b_k(x,y)|^2+\sum_{|y-x|>k}|b_k(x,y)|^2\\
&\le k^{-4}\|\nabla^2 f\|_\infty^2 \sum_{|y-x|\le k}\frac{1}{|x-y|^{2d+2\alpha-4}}+
c_1 \|f\|_\infty^2
\sum_{|y-x|>k}\frac{1}{(1+|y|)^{2d+2\alpha}}\\
&\le c_2 \max\{k^{-4},k^{-(d+2\alpha)}\}.
\end{align*}
When $x\in B_{2k}^c$,  we have
$$
\sum_{y\in \Z^d}|b_k(x,y)|^2 =\sum_{y\in B_k}|b_k(x,y)|^2
 \le c_3\|f\|_\infty^2 \sum_{y\in B_k}\frac{1}{(1+|x|)^{2d+2\alpha}}\le c_4k^d(1+|x|)^{-2d-2\alpha}.
$$

With aid of the estimate above, we can prove \eqref{l3-1-1} by
the same arguments as these for {\bf Case (1)}.
\end{proof}

Next, we prove the following statement.
\begin{proposition}\label{l3-1-1-1}
Suppose that $\alpha\in (0,1)$.
Under {\bf Assumption (H1)},  for every
 $f\in C_c^2 (\R^d)$,
there exist a random variable $k^*_2(\w)\ge1$ and
a constant $C_6^*>0$ such that for all $k>k^*_2(\w)$,
\begin{equation}\label{l3-1-1-1a}
\int_{k^{-1}\Z^d} |  \sL^{k} f(x)-\bar \sL^{k}f(x) |^2\,\mu^{k}(dx)\le C_6^* \max\{k^{- 2(1-\alpha)},k^{-d}\}\log k.
\end{equation}
\end{proposition}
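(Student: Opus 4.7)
The strategy mirrors Case 2 of the proof of Proposition \ref{l3-1}, but operates directly on $\sL^k f - \bar\sL^k f$ instead of on $\widehat\sL^k f - \bar\sL^k f$. Using the first (gradient-correction-free) form of $\bar\sL^k$ in \eqref{e3-1a}, which is absolutely convergent for $\alpha \in (0,1)$, we have
\[
\sL^k f(x) - \bar\sL^k f(x) = k^{-d}\sum_{z\in k^{-1}\Z^d\setminus\{0\}} (f(x+z)-f(x))\frac{w_{kx,k(x+z)}-1}{|z|^{d+\alpha}}.
\]
Passing to integer coordinates ($x \in \Z^d \leftrightarrow k^{-1}x \in k^{-1}\Z^d$) exactly as in Proposition \ref{l3-1}, this becomes $k^\alpha \sum_{y \neq x} \eta_k(x,y)$ with $\eta_k(x,y) := \xi_k(x,y)(w_{x,y}-1)/|x-y|^{d+\alpha}$ and $\xi_k(x,y) := f(k^{-1}y) - f(k^{-1}x)$. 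Each $\eta_k(x,y)$ is bounded and mean-zero, and for fixed $x$ the family $\{\eta_k(x,y)\}_{y\ne x}$ is independent by {\bf Assumption (H1)}.

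I would next estimate the Hoeffding coefficient $\ell^2$-norm $\|b_k(x)\|_2^2 := \sum_y |\xi_k(x,y)|^2/|x-y|^{2(d+\alpha)}$. Assuming without loss of generality that $\mathrm{supp}[f] \subset B_1$, for $x \in B_{2k}$ a split into the near range $|y-x|\le k$ (using $|\xi_k(x,y)| \le \|\nabla f\|_\infty k^{-1}|y-x|$) and the far range $|y-x|>k$ (using $|\xi_k(x,y)| \le 2\|f\|_\infty$), followed by a standard Riemann-sum estimate, yields $\|b_k(x)\|_2^2 \le C\max\{k^{-2}, k^{-d-2\alpha}\}$. For $x \in B_{2k}^c$, since $f(k^{-1}x)=0$ and only $y$ with $|y|\le k$ contribute (about $k^d$ terms) with $|x-y|\ge |x|/2$, one obtains $\|b_k(x)\|_2^2 \le Ck^d(1+|x|)^{-2(d+\alpha)}$. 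The crucial difference from Proposition \ref{l3-1} is that, without a second-order centering, the near-range bound is only $k^{-2}$ rather than $k^{-4}$; this is exactly what degrades the final rate from $k^{-2(2-\alpha)}$ to $k^{-2(1-\alpha)}$.

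With these coefficient bounds in hand I would apply Hoeffding's inequality coordinate-wise and a Borel--Cantelli argument verbatim as in Proposition \ref{l3-1}: for $\kappa$ sufficiently large (depending only on $d$ and $f$), there exists $k_2^*(\omega) \ge 1$ so that for every $k > k_2^*(\omega)$,
\[
\Big|\sum_y \eta_k(x,y)\Big| \le
\begin{cases}
\kappa \max\{k^{-1}, k^{-d/2-\alpha}\}\log^{1/2}k, & x \in B_{2k},\\[2pt]
\kappa\, k^{d/2}(1+|x|)^{-(d+\alpha)}\log^{1/2}(1+|x|), & x \in B_{2k}^c.
\end{cases}
\]
Plugging into $\int |\sL^k f - \bar\sL^k f|^2\, d\mu^k = k^{-(d-2\alpha)}\sum_x (\sum_y \eta_k(x,y))^2$ and splitting the outer sum at $|x|=2k$, the near sum is bounded by $k^{-(d-2\alpha)}\cdot k^d \cdot \kappa^2\max\{k^{-2}, k^{-d-2\alpha}\}\log k = C\max\{k^{-2(1-\alpha)}, k^{-d}\}\log k$, while the far sum contributes $k^{-(d-2\alpha)}\cdot \kappa^2 k^d \log k\sum_{|x|>2k}(1+|x|)^{-2(d+\alpha)} \le Ck^{-d}\log k$ by a discrete-to-continuous estimate. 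Combining the two yields the claimed bound; the only genuinely new step beyond Proposition \ref{l3-1} is the simpler $\|b_k\|_2^2$ computation, and no additional stochastic input is needed.
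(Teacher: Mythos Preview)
Your proposal is correct and follows essentially the same route as the paper's own proof: the paper also sets $\xi_k(x,y)=f(k^{-1}y)-f(k^{-1}x)$, derives the identity \eqref{l3-1-2a}, bounds $\|b_k(x)\|_2^2$ by $c\max\{k^{-2},k^{-(d+2\alpha)}\}$ on $B_{2k}$ and by $ck^d(1+|x|)^{-2d-2\alpha}$ on $B_{2k}^c$, and then defers to the Hoeffding/Borel--Cantelli argument from Case~1 of Proposition~\ref{l3-1}. The only cosmetic difference is that the paper splits the near/far range at $z\in B_{3k}$ versus $z\in B_{3k}^c$ rather than at $|y-x|=k$, which yields the same bound.
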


\begin{proof}
As before we may
assume that $\text{supp} [f]\subset B(0,1)$.
For every $x,y\in \Z^d$, set
$$\xi_k(x,y):=f(k^{-1}y)-f(k^{-1}x),\quad
 \eta_k(x,y):=\xi_{k}(x,y)\frac{w_{x,y}-1}{|x-y|^{d+\alpha}}.
$$
Then, we have
\begin{align}\label{l3-1-2a}
\int_{k^{-1}\Z^d} | \sL^{k} f(x)-\bar \sL^{k}f(x) |^2\,\mu^{k}(dx)
=k^{-(d-2\alpha)}\sum_{x\in \Z^d}\left(
\sum_{z\in \Z^d \setminus \{x\}}
\eta_k(x,z)\right)^2.
\end{align}

We also set
$$
b_k(x,z):=\frac{\left|f(k^{-1}x)-f(k^{-1}z)\right|}{|x-z|^{d+\alpha}}.
$$
For every $x\in B_{2k}$, it holds that
\begin{align*}
\sum_{z\in \Z^d}|b_k(x,z)|^2&\le \sum_{z\in B_{3k}}|b_k(x,z)|^2+\sum_{z\in B_{3k}^c}|b_k(x,z)|^2\\
&\le k^{-2}\|\nabla f\|_\infty^2 \sum_{z\in B_{3k}}\frac{1}{|x-z|^{2d+2\alpha-2}}+
c_1\|f\|_\infty^2 \sum_{z\in B_{3k}^c}\frac{1}{(1+|z|)^{2d+2\alpha}}
\le c_2 \max\{k^{-2},k^{-(d+2\alpha)}\};
\end{align*}
while for $x\in B_{2k}^c$, we have
\begin{align*}
\sum_{z\in \Z^d}|b_k(x,z)|^2&=\sum_{z\in B_k}|b_k(x,z)|^2\le c_3\|f\|_\infty^2 \sum_{z\in B_k}\frac{1}{(1+|x|)^{2d+2\alpha}}\le c_4k^d(1+|x|)^{-2d-2\alpha},
\end{align*}
Hence, we know
\begin{equation}\label{l3-1-3a}
\|b_k(x)\|_2^2:=\sum_{z\in \Z^d}|b_k(x,z)|^2\le
\begin{cases}
c_2 \max\{k^{-2},k^{-(d+2\alpha)}\},&\quad x\in B_{2k},\\
c_4 k^d(1+|x|)^{-2d-2\alpha},&\quad x\in B_{2k}^c.
\end{cases}
\end{equation}

By applying these estimates and following the arguments in {\bf Case (1)} of the proof for Proposition \ref{l3-1}, we can
readily obtain the desired assertion \eqref{l3-1-1-1a}.
  \end{proof}

\begin{remark}The main probabilistic approach to obtain results in Sections \ref{section2}, \ref{section2-} and \ref{section3} (for example, the local Poincar\'e inequality \eqref{l2-1-1},
the $H^{\alpha/2}$-bounds for the solutions of localized Poisson equations, and the
convergence rates of the scaled operators $
\sL^k$) is based on
the
Borel-Cantelli's arguments. Therefore, it is clear that the idea still
works
when the uniformly bounded condition of $w_{x,y}$ in {\bf Assumption (H1)}(3) is
replaced with some moment conditions on $w_{x,y}$.
However,
the rate of convergence may be slower under such conditions.
Indeed, this approach should work
for long range jumps in the one-parameter stationary ergodic environment; see \cite{CCKW2} for more details on homogenization of symmetric non-local Dirichlet forms with $\alpha$-stable-like
jumping kernels. Note that, in the one-parameter stationary ergodic setting, some additional
de-correlation assumptions (for instance,  FKG-type inequality,
 finite range dependence, or negative association)
are needed
in order to get (explicit) moments and concentration estimates for the conductances (see \cite[Section 2]{AH}).
 \end{remark}

\section{Proof of Main Result}

This section
is devoted to the proof of Theorem \ref{T:main}. For every $f\in \mathscr{S}^\lambda_0$,
$\bar u: =\bar R_\lambda f = (\lambda-\bar \sL)^{-1}f \in C_c^\infty(\R^d)$,
and so, by the mean value theorem,
for every $z\in k^{-1}\Z^d$ and $x\in \prod_{1\le i\le d} (z_i,z_i+k^{-1}]$, we can find
a constant $\eta(x,z)\in [0,1]$ such that
\begin{equation}\label{e:pppqqq}\begin{split} \sum_{z\in k^{-1}\Z^d}\int_{\prod_{1\le i\le d} (z_i,z_i+k^{-1}]}
|\bar u(x)-\bar u(z)|^2\,dx
&\le   k^{-2} \sum_{z\in k^{-1}\Z^d}\int_{\prod_{1\le i\le d} (z_i,z_i+k^{-1}]}|\nabla \bar u(x+\eta(x,z)(z-x))|^2\,dx\\
&\le  c_0' k^{-2}
\|\nabla \bar u\|_\infty^2
\le c_0 k^{-2} \end{split}\end{equation} for all $k\ge1$ with some $c_0>0$ (which is independent of $k$).

\begin{proof}[Proof of Theorem $\ref{T:main}$]
Let $\bar u:= \bar R_\lambda f$ and $u_k:=R^{k, \w}_\lambda f$.
Without loss of generality, we assume that $ {\rm supp} [\bar u] \subset B(0,1)$. The proof is split into two cases.

\medskip

 \noindent {\bf Case 1: $\alpha\in (1,2)$.}\,\,  The proof of this case is long
 so we divide it  into six steps.

 \smallskip

{\bf Step 1:}
Let $\phi_m:B_{2^m}\to \R^d$ be the solution to \eqref{e2-1}. We extend it to $\phi_m:\R^d \to \R^d$ by setting $\phi_m(x)=\phi_m(z)$ if $x\in \prod_{1\le i\le d}(z_i,z_i+1]\cap B_{2^m}$ for
some (unique)
$z\in \Z^d\cap B_{2^m}$, and $\phi_m(x)=0$ if $x\notin B_{2^m}$.
For any $k\in \N_+$, let $2^m\le k<2^{m+1}$ with the unique nonnegative integer $m\ge 0$.
According to \eqref{l2-1-1} and \eqref{p2-1-2}, for any $\gamma>0$, there is a random variable $m_1^*:=m_1^*(\w)\ge1$ such that
for all $m>m_1^*$,
\begin{equation}\label{t3-1-2}
\oint_{B_{2^m}}|\phi_m(x)|^2\,\mu(dx)
\le c_12^{m(\alpha-d)}
 \mathscr{E}_{B_{2^m}}^\w(\phi_m,\phi_m)
\le c_2
m^{\frac{\alpha(1+\gamma)}{(2(d-\alpha))\wedge d}}
2^{m\alpha}.
\end{equation}

Define
\begin{equation}\label{eq:v_k}
v_k(x):=\bar u(x)+k^{-1}
\langle \nabla \bar u(x),\phi_{m+2}(k x)\rangle,
\quad x\in \R^d.
\end{equation}
Since $\text{supp} [\bar u ] \subset B_1$, $v_k(x)=0$ for all $x\notin B_1$. Thus, for any $k\ge 2^m>2^{m_1^*}$,
\begin{equation}\label{t3-1-3}
\begin{split}
\int_{\R^d}|v_k(x)-\bar u(x)|^2 \,dx&=k^{-2}
\int_{B_{1}}
|\langle\nabla \bar u(x), \phi_{m+2}(k x)\rangle|^2
\, dx \\
&\le c_3 k^{-2}\int_{B_{1}}|\phi_{m+2}(k x)|^2\,dx=
c_3k^{-(2+d)}\int_{B_{k}}|\phi_{m+2}(x)|^2\,dx\\
&\le c_3k^{-(2+d)}\int_{B_{2^{m+2}}}|\phi_{m+2}|^2(x)\,\mu(dx)
\le c_4k^{-(2-\alpha)}
\log^{\frac{\alpha(1+\gamma)}{(2(d-\alpha))\wedge d}}k,
\end{split}
\end{equation}
where in the last inequality we used \eqref{t3-1-2} and the fact that $2^m\le k<2^{m+1}$.

We also can restrict $v_k:\R^d \to \R$ to $v_k:k^{-1}\Z^d \to \R$. Thus, for any $x\in k^{-1}\Z^d$,
\begin{align*}
\sL^{k}v_k(x)&=\sL^{k}\bar u(x)+k^{-1}
\left\langle\phi_{m+2}(kx),  \int_{k^{-1}\Z^d} \left(\nabla \bar u(x+z)-\nabla \bar u(x)\right)\frac{w_{kx,k(x+z)}}{|z|^{d+\alpha}} \,\mu^{k}(dz) \right\rangle\\
&\quad+k^{-1}\left\langle \nabla \bar u(x),  \int_{k^{-1}\Z^d}\left(\phi_{m+2}\left(k(x+z)\right)-\phi_{m+2}\left(k x\right)\right)
\frac{w_{kx,k(x+z)}}{|z|^{d+\alpha}}\,\mu^{k}(dz) \right\rangle\\
&\quad +k^{-1}\int_{k^{-1}\Z^d}\left\langle \nabla \bar u(x+z)-\nabla \bar u(x)
, \phi_{m+2}(k(x+z))-\phi_{m+2}(kx)\right\rangle\frac{w_{kx,k(x+z)}}{|z|^{d+\alpha}}\,\mu^{k}(dz)\\
&=:\sum_{i=1}^4 I_i^k(x).
\end{align*}

\smallskip

{\bf Step 2:}
By \eqref{e:operator}, \eqref{e:operator0} and \eqref{e2-2}, it holds
\begin{align*}
I_1^k(x)&=\widehat \sL^{k} \bar u(x)+
\left\langle\nabla \bar u(x), \int_{k^{-1}\Z^d}z\frac{w_{kx,k(x+z)}}{|z|^{d+\alpha}}\,\mu^{k}(dz) \right\rangle\\
&= \widehat \sL^{k} \bar u(x)+k^{\alpha-1}
\left\langle\nabla \bar u(x),  \int_{\Z^d}z\frac{w_{k x,k x+z}}{|z|^{d+\alpha}}\,\mu(dz) \right\rangle
= \widehat \sL^{k} \bar u(x)+k^{\alpha-1}\langle \nabla \bar u(x), V(k x)\rangle.
\end{align*}
According to
Propositions \ref{l3-1-0} and \ref{l3-1},
we can find a random  variable $m_2^*:=m_2^*(\w)\ge1$ such that for every $k\ge 2^m>2^{m^*_2}$,
$$
\int_{k^{-1}\Z^d}|\widehat \sL^{k}\bar u(x)-\bar \sL \bar u(x)|^2\, \mu^{k}(dx)\le c_5\max\{k^{-2(2-\alpha)},k^{-d}\}\log k.
$$
Thus we have
\begin{equation}\label{t3-1-3a}
I_1^k(x)=\bar \sL \bar u(x)+k^{\alpha-1}
\left\langle\nabla \bar u(x), V(k x)\right\rangle
+K_1^k(x),
\end{equation}
where for all $k>2^{m_2^*}$,
$$
\int_{k^{-1}\Z^d}|K_1^k(x)|^2 \,\mu^{k}(dx)\le c_5
\max\{k^{-2(2-\alpha)},k^{-d}\}\log k.
$$

\smallskip

{\bf Step 3:}
We rewrite
\begin{align*}
I_2^k(x)&=k^{-1}
\Bigg\langle\phi_{m+2}(k x),
\int_{k^{-1}\Z^d} \left(\nabla \bar u(x+z)-\nabla \bar u(x)-\langle \nabla^2 \bar u(x), z\I_{\{|z|\le 1\}}\rangle\right)\frac{w_{kx,k(x+z)}}{|z|^{d+\alpha}}\,\mu^{k}(dz)\\
&\qquad\qquad\qquad\qquad\,\,+\int_{k^{-1}\Z^d}\left\langle\nabla^2 \bar u(x), z\I_{\{|z|\le 1\}}\right\rangle\frac{w_{kx,k(x+z)}}{|z|^{d+\alpha}}\,
\mu^{k}(dz)\Bigg\rangle\\
&=:I_{21}^k(x)+I_{22}^k(x).
\end{align*}

Since $\bar u\in C_c^\infty(\R^d)$, we get $|I_{21}^k(x)|\le c_6k^{-1}|\phi_{m+2}(k x)|$. So, by \eqref{t3-1-2} and the fact that $2^m\le k<2^{m+1}$, it holds
that for any $k>2^{m_1^*}$,
\begin{equation*}
\int_{k^{-1}\Z^d}|I_{21}^k(x)|^2 \,\mu^{k}(dx)\le c_7k^{-2}\oint_{B_{2^{m+2}}}|\phi_{m+2}(x)|^2 \,\mu(dx)\le
c_8k^{-(2-\alpha)}
\log^{\frac{\alpha(1+\gamma)}{(2(d-\alpha))\wedge d}}k.
\end{equation*}

For every $g\in L^2(k^{-1}\Z^d;\mu^{k})$,  we have
\begin{align*}
&\int_{k^{-1}\Z^d}I_{22}^k(x)g(x)\,\mu^{k}(dx)\\
&=k^{-1}\int_{k^{-1}\Z^d}\int_{\{|z|\le 1\}}
\langle G(x),\phi_{m+2}(kx)\otimes z\rangle
\frac{w_{k x, k(x+z)}}{|z|^{d+\alpha}}\,\mu^{k}(dz)\,\mu^{k}(dx)\\
&=k^{-1}\Bigg(\frac{1}{2}\int_{k^{-1}\Z^d}\int_{\{|z|\le 1\}}
\langle G(x),\phi_{m+2}(kx)\otimes z\rangle
\frac{w_{k x, k(x+z)}}{|z|^{d+\alpha}}\mu^{k}(dz)\,\mu^{k}(dx)\\
&\qquad\qquad -\frac{1}{2}\int_{k^{-1}\Z^d}\int_{\{|z|\le 1\}}
\langle G(x+z),\phi_{m+2}(k(x+z))\otimes z\rangle
\frac{w_{k x, k(x+z)}}{|z|^{d+\alpha}}\,\mu^{k}(dz)\,\mu^{k}(dx)\Bigg)\\
&=(2k)^{-1}\int_{k^{-1}\Z^d}\int_{\{|z|\le 1\}}
\left\langle G(x),\phi_{m+2}(kx)\otimes z\right\rangle- \left\langle G(x+z),\phi_{m+2}(k(x+z))\otimes z\right\rangle
\frac{w_{k x, k(x+z)}}{|z|^{d+\alpha}}\,\mu^{k}(dz)\,\mu^{k}(dx),
\end{align*}
where $G(x):=g(x)\nabla^2 \bar u(x)$ and in the second equality we have used the change of variables $x=\tilde x+\tilde z$ and $z=-\tilde z$.
Next, we set
\begin{align*}
&\left|\int_{k^{-1}\Z^d}I_{22}^k(x)g(x)\,\mu^{k}(dx)\right|\\
&\le (2k)^{-1}\Bigg(\int_{B_2}\int_{\{|z|\le 1\}}|\phi_{m+2}(k x)|
|G(x+z)-G(x)||z|\frac{w_{k x,k(x+z)}}{|z|^{d+\alpha}}\,\mu^{k}(dz)\,\mu^{k}(dx)\\
&\qquad \qquad +\int_{B_1}\int_{\{|z|\le 1\}}|G(x)|
|\phi_{m+2}(k (x+z))-\phi_{m+2}(k x)||z|\frac{w_{k x,k(x+z)}}{|z|^{d+\alpha}}\,\mu^{k}(dz)\,\mu^{k}(dx)\Bigg)\\
&=:I_{221}^k+I_{222}^k,
\end{align*}
where we applied the change of variables $x=\tilde x+\tilde z$, $z=-\tilde z$ as before
and used the fact that $\text{supp} [G] \subset B_1$ and $\text{supp} [ \phi_{m+2}] \subset B_{2k}$.

For any
$G=\{G^{(ij)}\}_{1\le i,j\le d}:k^{-1}\Z^d \to \R^{d}\times \R^d$
with $G^{(ij)}\in L^2(k^{-1}\Z^d;\mu^k)$ for every $1\le i,j \le d$,
define
\begin{align*}
 \mathscr{E}^{k,\w}(G,G) &:
 =\frac{1}{2}
 \sum_{i,j=1}^d
 \int_{k^{-1}\Z^d}\int_{k^{-1}\Z^d}
\frac{(G^{(ij)}(x+z)-G^{(ij)}(x))^2w_{k x,k(x+z)}(\w)}{|z|^{d+\alpha}}\,\mu^{k}(dz)\,\mu^{k}(dx).
\end{align*}
According to the Cauchy-Schwartz inequality, we have for all $k>2^{m_1^*}$,
\begin{align*}
I_{221}^k &\le c_9k^{-1}
\left(\int_{B_2}|\phi_{m+2}(k x)|^2\left(\int_{\{|z|\le 1\}}\frac{|z|^2}{|z|^{d+\alpha}}\,\mu^{k}(dz)\right)
\,\mu^{k}(dx)\right)^{1/2}\\
&\qquad\qquad\times\left(\int_{k^{-1}\Z^d}\int_{\{|z|\le 1\}}\frac{|G(x+z)-G(x)|^2w_{k x,k(x+z)}}{|z|^{d+\alpha}}
\,\mu^{k}(dz)\,\mu^{k}(dx)\right)^{1/2}\\
&\le c_{10}k^{-1}\left(\int_{B_2}|\phi_{m+2}(k x)|^2\,\mu^{k}(dx)\right)^{1/2}  \mathscr{E}^{k,\w}(G,G) ^{1/2}\\
&\le c_{11}k^{-(2-\alpha)/2}
(\log^{\frac{\alpha(1+\gamma)}{(4(d-\alpha))\wedge (2d)}}k)
\,\, \mathscr{E}^{k,\w}(G,G) ^{1/2}\\
&\le c_{12}
k^{-(2-\alpha)/2}
(\log^{\frac{\alpha(1+\gamma)}{(4(d-\alpha))\wedge (2d)}}k)
\left( \mathscr{E}^{k,\w}(g,g)^{1/2}+\|g\|_{L^2(k^{-1}\Z^d;\mu^{k})}\right),
\end{align*}
where in the third inequality we used
\eqref{t3-1-2} and $2^m\le k <2^{m+1}$, and the last inequality follows from the definition of  $\mathscr{E}^{k,\w}(G,G)$.

On the other hand, by the Cauchy-Schwartz inequality again,
\begin{equation}\label{t3-1-4}
\begin{split}
I_{222}^k
&\le c_{13}k^{-1}\left(\int_{B_1}G^2(x)\left(\int_{\{|z|\le 1\}}\frac{|z|^2}{|z|^{d+\alpha}}\,\mu^{k}(dz)\right)\,
\mu^{k}(dx)\right)^{1/2}\\
&\quad\times\left(\int_{B_1}\int_{\{|z|\le 1\}}
\left|\phi_{m+2}(k(x+z))-\phi_{m+2}(k x)\right|^2
\frac{w_{k x,k(x+z)}}{|z|^{d+\alpha}}\,\mu^{k}(dz)\,\mu^{k}(dx)\right)^{1/2}.
\end{split}
\end{equation}
By \eqref{p2-1-2} we have for all $k>2^{m_2^*}$,
\begin{align*}
&\int_{B_1}\int_{\{|z|\le 1\}}
\left|\phi_{m+2}(k(x+z))-\phi_{m+2}(k x)\right|^2
\frac{w_{k x,k(x+z)}}{|z|^{d+\alpha}}\,\mu^{k}(dz)\,\mu^{k}(dx)\\
&\le k^{-(d-\alpha)}
\int_{B_{k}}\int_{B_{2k}}\left|\phi_{m+2}(x)-\phi_{m+2}(y)\right|^2
\frac{w_{x,y}}{|x-y|^{d+\alpha}}\,\mu(dy)\,\mu(dx)\\
&\le k^{-(d-\alpha)}
\int_{B_{2k}}\int_{B_{2k}}\left|\phi_{m+2}(y)-\phi_{m+2}(x)\right|^2
\frac{w_{x,y}}{|x-y|^{d+\alpha}}\,
\mu(dy)\,\mu(dx)\\
&\le k^{-(d-\alpha)}
\mathscr{E}_{B_{2k}}^\w(\phi_{m+2},\phi_{m+2})
\le c_{14}k^{\alpha}
\log^{\frac{\alpha(1+\gamma)}{(2(d-\alpha))\wedge d}}k,
\end{align*}
Putting this into \eqref{t3-1-4} yields that for all $k>2^{m_2^*}$,
\begin{align*}
I_{222}^k \le c_{15}k^{-(2-\alpha)/2}
(\log^{\frac{\alpha(1+\gamma)}{(4(d-\alpha))\wedge (2d)}}k)
\|G\|_{L^2((B_1;\R^d\times \R^d);\mu^{k})}\le c_{16}
k^{-(2-\alpha)/2}
(\log^{\frac{\alpha(1+\gamma)}{(4(d-\alpha))\wedge (2d)}}k)
\|g\|_{L^2(B_1;\mu^{k})}.
\end{align*}

Combining
all the above estimates, we have
\begin{equation}\label{t3-1-5}
I_2^k(x)=K_{21}^k(x)+K_{22}^k(x),
\end{equation}
where for every $k>2^{m_2^*}$,
$$\int_{k^{-1}\Z^d}|K_{21}^k(x)|^2\,\mu^{k}(dx)\le c_{17}k^{-(2-\alpha)}
\log^{\frac{\alpha(1+\gamma)}{(2(d-\alpha))\wedge d}}k
$$ and
\begin{align*}
\left|\int_{k^{-1}\Z^d}K_{22}^k(x)g(x)\,\mu^{k}(dx)\right|\le c_{18}
k^{-(2-\alpha)/2}
(\log^{\frac{\alpha(1+\gamma)}{(4(d-\alpha))\wedge (2d)}}k)
(\E^{k,\w}(g,g)^{1/2}+\|g\|_{L^2(k^{-1}\Z^d;\mu^{k})} )
\end{align*}
for all $g\in L^2(k^{-1}\Z^d;\mu^{k})$.

\smallskip

{\bf Step 4:}
We know that
\begin{align*}
I_3^k(x)&=k^{\alpha-1}
\left\langle\nabla \bar u(x),  \int_{\Z^d}\left(\phi_{m+2}(y)-\phi_{m+2}(k x)\right)\frac{w_{kx,y}}{|y-k x|^{d+\alpha}}\,\mu(dy) \right\rangle\\
&=k^{\alpha-1}\Bigg\langle \nabla \bar u(x),  \int_{B_{2^{m+2}}}\left(\phi_{m+2}(y)-\phi_{m+2}(k x)\right)\frac{w_{kx,y}}
{|y-k x|^{d+\alpha}}\,\mu(dy)\\
&\qquad\qquad\qquad \qquad\,\,+\int_{B_{2^{m+2}}^c}\left(\phi_{m+2}(y)-\phi_{m+2}(k x)\right)\frac{w_{kx,y}}
{|y-k x|^{d+\alpha}}\,\mu(dy)\Bigg\rangle\\
&=k^{\alpha-1}\left\langle \nabla \bar u(x),\sL_{B_{2^{m+2}}}\phi_{m+2}(k x)+\int_{B_{2^{m+2}}^c}\left(\phi_{m+2}(y)-\phi_{m+2}(k x)\right)
\frac{w_{kx,y}}{|y-k x|^{d+\alpha}}\,\mu(dy) \right\rangle\\
&=k^{\alpha-1}\left\langle\nabla \bar u(x),-V(k x)+\oint_{B_{2^{m+2}}}V\,d\mu+\int_{B_{2^{m+2}}^c}\left(\phi_{m+2}(y)-\phi_{m+2}(k x)\right)\frac{w_{kx,y}}{|y-k x|^{d+\alpha}}\,\mu(dy) \right\rangle\\
&=:-k^{\alpha-1}\langle\nabla \bar u(x), V(k x)\rangle+I_{31}^k(x)+I_{32}^k(x).
\end{align*}
Combining \eqref{p2-1-4} with the fact that $2^m\le k<2^{m+1}$, for every
$\theta\in (\alpha/d,1)$,
we can find a random variable $m_3^*:=m_3^*(\w)>1$ such that
for all $k>2^{m_3^*}$,
$$
\int_{k^{-1}\Z^d}|I_{31}^k(x)|^2\,\mu^{k}(dx) \le k^{2(\alpha-1)}\|\nabla \bar u\|_\infty^2\int_{B_1}\Big|\oint_{B_{2^{m+2}}}V\,d\mu\Big|^2 \,
\mu^k(dx)
\le c_{19}k^{2\alpha-2-\theta d}.
$$
Since $\phi_{m+2}(y)=0$ for all $y\in B_{2^{m+2}}^c$ and
$\text{supp} [ u ] \subset B_1$, it holds that for all $k>2^{m_3^*}$,
\begin{align*}
\int_{k^{-1}\Z^d}|I_{32}^k(x)|^2\,\mu^{k}(dx)&\le
k^{2(\alpha-1)}\|\nabla \bar u\|_\infty^2
\int_{B_1}|\phi_{m+2}(k x)|^2 \bigg(\sum_{y\in k^{-1}\Z^d:|y-k x|\ge k}\frac{w_{kx,y}}{|y-k x|^{d+\alpha}}\bigg)^2\,
\mu^{k}(dx)\\
&\le c_{20}k^{-2}\int_{B_1}|\phi_{m+2}(k x)|^2\,\mu^{k}(dx)\le c_{21}k^{-(2-\alpha)}
\log^{\frac{\alpha(1+\gamma)}{(2(d-\alpha))\wedge d}}k,
\end{align*}
where the last inequality follows from  \eqref{t3-1-2}.

Summarizing all above estimates for $I_3^k$, we
conclude that
\begin{equation}\label{t3-1-6}
\begin{split}
I_3^k(x)&=-k^{\alpha-1}
\langle \nabla \bar u(x), V(kx)\rangle
+K_3^k(x),
\end{split}
\end{equation}
where for all $k>2^{\max\{m_1^*, m_3^*\}}$, $$\int_{k^{-1}\Z^d}|K_3^k(x)|^2\,\mu^{k}(dx)\le c_{22}k^{-(2-\alpha)}
\log^{\frac{\alpha(1+\gamma)}{(2(d-\alpha))\wedge d}}k.
$$
Here,  we used the fact that
$-(2-\alpha)>2\alpha-2-\theta d$
as
$\theta\in (\alpha/d,1)$.

\smallskip

{\bf Step  5:}
When $x\in B_{3/2}^c$, we get from the fact $\nabla \bar u(z)\neq 0$ only if $z\in B_1$ that
for every $k>2^{m_1^*}$,
\begin{align*}
|I_4^k(x)|&=k^{-1}\left|\int_{\{x+z\in B_1\}}\frac{
\left\langle \nabla \bar u(x+z), \phi_{m+2}\left(k(x+z)\right)-\phi_{m+2}(k x)\right\rangle
w_{k x,k(x+z)}}{|z|^{d+\alpha}}\,\mu^{k}(dz)\right|\\
&=k^{-1}\left|\int_{B_1}\frac{
\left\langle \nabla \bar u(y),
\phi_{m+2}\left(k y\right)-\phi_{m+2}\left(k x\right)\right\rangle
w_{k x,k y}}{|y-x|^{d+\alpha}}\,\mu^{k}(dy)\right|\\
&\le c_{23}k^{-1}(1+|x|)^{-d-\alpha}
\left(\int_{ B_1}\left|\phi_{m+2}\left(k y\right)\right|\mu^{k}(dy)+|\phi_{m+2}(k x)|\right)\\
&\le c_{24}k^{-1}(1+|x|)^{-d-\alpha}
\left(\left(\int_{  B_1}\left|\phi_{m+2}\left(k y\right)\right|^2\mu^{k}(dy)\right)^{1/2}+|\phi_{m+2}(k x)|\right)\\
&\le c_{25}\left(k^{-(2-\alpha)/2}
(\log^{\frac{\alpha(1+\gamma)}{(4(d-\alpha))\wedge (2d)}}k)
\,(1+|x|)^{-d-\alpha}+k^{-1}|\phi_{m+2}(k x)|\right).
\end{align*}
Here the first inequality
follows from the fact that $|y-x|\ge c_{26}(1+|x|)$ for all $y\in B_1$ and $x\in B_{3/2}^c$,
and in the last inequality we used \eqref{t3-1-2}.

When $x\in B_{3/2}$, it holds that
\begin{align*}
|I_4^k(x)|&=k^{-1}\Bigg|
\int_{k^{-1}B_{{2^{m+2}}}}
\left\langle \nabla \bar u(y)-\nabla \bar u(x), \phi_{m+2}\left(k y\right)-\phi_{m+2}\left(k x\right)\right\rangle
\frac{w_{k x,k y}}{|y-x|^{d+\alpha}}\,\mu^{k}(dy)\\
&\qquad \quad+
\langle \nabla \bar u(x), \phi_{m+2}(k x)\rangle
\int_{B_{k^{-1}{2^{m+2}}}^c}
\frac{w_{k x,k y}}{|y-x|^{d+\alpha}}\,\mu^{k}(dy)\Bigg|\\
&\le c_{26}k^{-1}\left(\int_{B_{k^{-1}2^{m+2}}}
\frac{\left|\nabla \bar u\left(y\right)-\nabla \bar u\left(x\right)\right|^2}{|y-x|^{d+\alpha}}\,\mu^{k}(dy)\right)^{1/2}\\
&\qquad\qquad\times
\left(\int_{B_{k^{-1}2^{m+2}}}
\frac{\left|\phi_{m+2}\left(k y\right)-\phi_{m+2}\left(k x\right)\right|^2w_{k x,k y}}{|y-x|^{d+\alpha}}\,\mu^{k}(dy)
\right)^{1/2}\\
&\quad +c_{26}k^{-1}|\phi_{m+2}(k x)|\cdot\left(\int_{\{y\in k^{-1}\Z^d:|y-x|\ge 1/2\}}\frac{1}{|y-x|^{d+\alpha}}\,\mu^{k}(dy)\right)\\
&\le c_{27}k^{-1}\left(\left(\int_{B_{k^{-1}2^{m+2}}}
\frac{\left|\phi_{m+2}\left(k y\right)-\phi_{m+2}\left(k x\right)\right|^2w_{k x,k y}}{|y-x|^{d+\alpha}}\,\mu^{k}(dy)
\right)^{1/2}+|\phi_{m+2}(k x)|\right).
\end{align*}
Here the equality above
is due to the fact $\nabla \bar u(y)=\phi_{m+2}(k y)=0$ for all $y\in B_{k^{-1}{2^{m+2}}}^c$,
while
in the first
inequality we used the Cauchy-Schwarz inequality and the fact $|y-x|\ge 1/2$ for every $y\in B_{k^{-1}{2^{m+2}}}^c$ and $x\in B_{3/2}$.

Combining all above estimate yields that for all $k>2^{m_1^*}$,
\begin{align*}
|I_4^k(x)|&\le c_{28}\Bigg(k^{-1}\left(\int_{B_{k^{-1}2^{m+2}}}
\frac{\left|\phi_{m+2}\left(k y\right)-\phi_{m+2}\left(k x\right)\right|^2w_{k x, k y}}{|y-x|^{d+\alpha}}\,\mu^{k}(dy)
\right)^{1/2}\I_{B_{3/2}}(x)\\
&\qquad\quad +k^{-(2-\alpha)/2}
(\log^{\frac{\alpha(1+\gamma)}{(4(d-\alpha))\wedge (2d)}}k)
(1+|x|)^{-d-\alpha}\I_{B_{3/2}^c}(x)
+k^{-1}|\phi_{m+2}(k x)|\Bigg).
\end{align*}
Then, applying the Cauchy-Schwarz inequality again, we obtain that for all $k>2^{\max\{m_1^*,m_2^*\}}$,
\begin{equation}\label{t3-1-7}
\begin{split}
&\int_{k^{-1}\Z^d}|I_4^k(x)|^2\,\mu^{k}(dx)\\
&\le
c_{29}k^{-2}\int_{B_{3/2}}\int_{k^{-1}B_{2^{m+2}}}
\frac{\left|\phi_{m+2}\left(k y\right)-\phi_{m+2}\left(k x\right)\right|^2w_{k x,k y}}{|y-x|^{d+\alpha}}\,\mu^{k}(dy)
\,\mu^{k}(dx)\\
&\quad +c_{29}k^{-(2-\alpha)}
(\log^{\frac{\alpha(1+\gamma)}{(2(d-\alpha))\wedge d}}k)
\int_{B_{3/2}^c}\frac{1}{(1+|x|)^{2d+2\alpha}}\,\mu^{k}(dx)
+c_{29}k^{-2}\int_{B_{k^{-1}2^{m+2}}}|\phi_{m+2}(k x)|^2\,\mu^{k}(dx)\\
&\le c_{30}k^{-(2-\alpha)-d}\mathscr{E}_{B_{2^{m+2}}}^\w(\phi_{m+2},\phi_{m+2})+c_{29}k^{-(2-\alpha)}
(\log^{\frac{\alpha(1+\gamma)}{(2(d-\alpha))\wedge d}}k)
\int_{B_{3/2}^c}\frac{1}{(1+|x|)^{2d+2\alpha}}\,\mu^{k}(dx)\\
&\quad +c_{30}k^{-2}\oint_{B_{2^{m+2}}}|\phi_{m+2}(x)|^2\,\mu(dx)\\
&\le c_{31}k^{-(2-\alpha)}
\log^{\frac{\alpha(1+\gamma)}{(2(d-\alpha))\wedge d}}k,
\end{split}
\end{equation}
where the last inequality is due to \eqref{p2-1-2} and \eqref{t3-1-2}.

\smallskip

{\bf Step 6:}
Putting \eqref{t3-1-3a}, \eqref{t3-1-5}, \eqref{t3-1-6} and \eqref{t3-1-7} together, we find that
\begin{equation}\label{t3-1-8}
\begin{split}
\sL^{k}v_k(x)=\bar \sL \bar u(x)
+J_1^k(x)+J_2^k(x),\quad x\in k^{-1}\Z^d,
\end{split}
\end{equation}
where
\begin{equation}\label{t3-1-9}
\begin{split}
&\left|\int_{k^{-1}\Z^d}J_1^k(x)g(x)\,\mu^{k}(dx)\right|\le c_{29}k^{-(2-\alpha)/2}
(\log^{\frac{\alpha(1+\gamma)}{(4(d-\alpha))\wedge (2d)}}k)
(\E^{k,\w}(g,g)^{1/2}+\|g\|_{L^2(k^{-1}\Z^d;\,\mu^{k})} ),\\
&\int_{k^{-1}\Z^d}|J_2^k(x)|^2\,\mu^{k}(dx)\le c_{28}\left(\max\{k^{-2(2-\alpha)},k^{-d}\}\log k + k^{-(2-\alpha)}
\log^{\frac{\alpha(1+\gamma)}{(2(d-\alpha))\wedge d}}k
\right)
\end{split}
\end{equation} hold for all $k> k^*_0:=2^{\max\{m_2^*,m_3^*\}}$ and $g\in L^2(k^{-1}\Z^d;\mu^{k})$.
Therefore,
\begin{align*}
&\lambda(u_k(x)-v_k(x))-\sL^{k}(u_k-v_k)(x)\\
&= (\lambda u_k(x)-\sL^{k}u_k(x) )+
\lambda(\bar u(x)-v_k(x))- (\lambda \bar u(x)-
\bar \sL \bar u(x) )+J_1^k(x)+J_2^k(x)\\
&=J_1^k(x)+J_2^k(x)+\lambda (\bar u(x)-v_k(x))\\
&=:J_1^k(x)+Q^k(x).
\end{align*}
 We note that, due to \eqref{t3-1-3} and \eqref{t3-1-9}, for all
$k>k^*_0(\lambda):=k^*_0\vee c_\lambda$,
\begin{equation}\label{t3-1-10}
\int_{k^{-1}\Z^d}|Q^k(x)|^2\,\mu^{k}(dx)\le c_{30}
k^{-(2-\alpha)}
\log^{\frac{\alpha(1+\gamma)}{(2(d-\alpha))\wedge d}}k,
\end{equation} where in the inequality above we used the fact that $2-\alpha<d$ for all $\alpha\in (1,2)$.
Note that here and below the constants may depend on $\lambda$.

Since $u_k,v_k\in L^2(k^{-1}\Z^d;\mu^k)$,
multiplying $u_k-v_k$
on
both sides of the equality above, integrating  with respect to $\mu^{k}$
and applying \eqref{e1-1a},
 we have   for all
$k>k^*_0(\lambda)$,
\begin{align*}
&\lambda\|u_k-v_k\|^2_{L^2(k^{-1}\Z^d;\mu^{k})}+ \E^{k,\w}(u_k-v_k, u_k-v_k)\\
&=\lambda \|u_k-v_k\|^2_{L^2(k^{-1}\Z^d;\mu^{k})}-
\int_{k^{-1}\Z^d}\sL^{k}(u_k-v_k)(x)\cdot\left(u_k-v_k\right)(x)\,\mu^{k}(dx)\\
&=\int_{k^{-1}\Z^d}J_1^k(x)\left(u_k-v_k\right)(x)\,\mu^{k}(dx)+
\int_{k^{-1}\Z^d}Q^k(x)\cdot\left(u_k-v_k\right)(x)\,\mu^{k}(dx)\\
&\le \int_{k^{-1}\Z^d}J_1^k(x)\cdot\left(u_k-v_k\right)(x)\,\mu^{k}(dx)+2\lambda^{-1}
\int_{k^{-1}\Z^d}|Q^k(x)|^2\,\mu^{k}(dx)+\frac{\lambda}{2}\|u_k-v_k\|^2_{L^2(k^{-1}\Z^d;\mu^{k})}\\
&\le c_{31}k^{-(1-\alpha/2)}
(\log^{\frac{\alpha(1+\gamma)}{(4(d-\alpha))\wedge (2d)}}k)
\big(\E^{k,\w}(u_k-v_k, u_k-v_k)^{1/2}+\|u_k-v_k\|_{L^2(k^{-1}\Z^d;\mu^{k})}\big)\\
&+c_{31}k^{-(2-\alpha)}
(\log^{\frac{\alpha(1+\gamma)}{(2(d-\alpha))\wedge d}}k)
+\frac{\lambda}{2}\|u_k-v_k\|^2_{L^2(k^{-1}\Z^d;\mu^{k})}\\
&\le \frac{3\lambda}{4}\|u_k-v_k\|^2_{L^2(k^{-1}\Z^d;\mu^{k})}+\frac{3}{4}
\E^{k,\w}(u_k-v_k, u_k-v_k)+c_{32}k^{-(2-\alpha)}
\log^{\frac{\alpha(1+\gamma)}{(2(d-\alpha))\wedge d}}k,
\end{align*}
where in the first and the last inequalities we have used Young's inequality, and  the second inequality follows from
\eqref{t3-1-9} and \eqref{t3-1-10}.
From the estimate above, we immediately get that
\begin{align*}
\|u_k-v_k\|^2_{L^2(k^{-1}\Z^d;\mu^{k})}\le c_{33}k^{-(2-\alpha)}
\log^{\frac{\alpha(1+\gamma)}{(2(d-\alpha))\wedge d}}k.
\end{align*}
Therefore it holds that for all $k>k^*_0(\lambda) $,
\begin{equation}\label{t3-1-12}
\begin{split}
 \|u_k-v_k\|^2_{L^2(\R^d;dx)}
&=\sum_{z\in k^{-1}\Z^d}\int_{\prod_{1\le i\le d}(z_i,z_i+k^{-1}]}|u_k(x)-v_k(x)|^2\,dx\\
&\le 2\sum_{z\in k^{-1}\Z^d}\int_{\prod_{1\le i\le d}(z_i,z_i+k^{-1}]}\left(|u_k(x)-v_k(z)|^2+
|v_k(x)-v_k(z)|^2\right)\,dx\\
&=2\|u_k-v_k\|^2_{L^2(k^{-1}\Z^d;\mu^{k})}+2\sum_{z\in k^{-1}\Z^d}\int_{\prod_{1\le i\le d}(z_i,z_i+k^{-1}]}
|v_k(x)-v_k(z)|^2\,dx\\
&\le 2\|u_k-v_k\|^2_{L^2(k^{-1}\Z^d;\mu^{k})}+c_{34}\bigg(k^{-2}+k^{-2}\sum_{z\in k^{-1}\Z^d}k^{-d}|\phi_{m+2}(k z)|^2\bigg)\\
&\le 2\|u_k-v_k\|^2_{L^2(k^{-1}\Z^d;\mu^{k})}+c_{35}\bigg(k^{-2}+k^{-2}\oint_{B_{2^{m+2}}}|\phi_{m+2}(z)|^2\bigg)\\
&\le c_{36}k^{-(2-\alpha)}
\log^{\frac{\alpha(1+\gamma)}{(2(d-\alpha))\wedge d}}k,
\end{split}
\end{equation}
where the second inequality
is from the following estimate due to \eqref{eq:v_k}
\begin{align*}
|v_k(x)-v_k(z)|&\le c_{37}\big(|\bar u(x)-\bar u(z)|+
k^{-1}|\nabla \bar u(x)-\nabla \bar u(z)|\cdot |\phi_{m+2}(kz)|\big)\\
&\le c_{38}k^{-1}\left(1+|\phi_{m+2}(kz)|\right),
\quad z\in 2^{-1}\Z^d, x\in \prod_{1\le i\le d}(z_i,z_i+k^{-1}]
\end{align*}
and \eqref{e:pppqqq}.
Note that in the first inequality above, we used the fact
$\phi_{m+2}(kx)=\phi_{m+2}(kz)$ for every  $x\in \prod_{1\le i\le d}(z_i,z_i+k^{-1}]$
by the way of extending the function at the beginning of {\bf Step 1}.

Combining this with \eqref{t3-1-3} we complete the proof for the case
of
 $\alpha\in (1,2)$.

\medskip

{\bf Case 2: $\alpha\in (0,1]$.}\,\, The proof is almost the same as that of the case $\alpha\in (1,2)$. Without loss of generality we assume that $\text{supp} [u ]  \subset B(0,1)$.
Let $\widetilde \phi_m:B_{2^m}\cap \Z^d \to \R^d$ be the solution to \eqref{e2-1-2--}.
For every $k\ge 1$, we choose $m\in \mathbb{N}$ such that $2^m\le k<2^{m+1}$, and define
$$v_k(x):=\bar u(x)+k^{-1}
 \langle \nabla \bar u(x), \widetilde \phi_{m+2}(k x) \rangle
,\quad x\in \R^d,$$ where we
extend $\widetilde \phi_m:\R^d \to \R^d$ by setting $\widetilde \phi_m(x)=\widetilde \phi_m(z)$ if $x\in \prod_{1\le i\le d} (z_i,z_i+1]\cap B_{2^m}$ for
some $z\in \Z^d\cap B_{2^m}$ and setting
$\widetilde \phi_m(x)=0$ if $x\notin B_{2^m}$.
Hence, according to \eqref{l2-1-1} and \eqref{p2-1-2-}, there is a random variable $m_4^*:=m_4^*(\w)\ge1$  such that
for all $m>m_4^*$,
\begin{equation}\label{a31}
\oint_{B_{2^m}}
|\widetilde \phi_m(x)|^2\,\mu(dx)
\le c_{38} 2^{m(\alpha -d)}
 \mathscr{E}_{B_{2^m}}(\widetilde \phi_m,\widetilde \phi_m)
\le c_{39} \begin{cases}2^{m(2-\alpha)}
m^{\frac{\alpha(1+\gamma)}{2(d-\alpha)\wedge d}}
&\quad \alpha\in (0,1),\\
 2^{m}
m^{2+\frac{1+\gamma}{2(d-1)\wedge d}},
&\quad \alpha=1.\end{cases}
\end{equation}

On the other hand, according to  \eqref{l3-1-1} and \eqref{l3-1-2}, there are a constant $c_{40}>0$ and $k_0^*(\w)\ge1 $ such that for all $k> k_0^*(\w)$,
\begin{equation*}
\int_{k^{-1}\Z^d}|\widehat \sL^k \bar u(x)-\bar \sL \bar u(x)|^2\mu^k(dx)\le c_{40}\begin{cases} \max\{k^{-2},k^{-d}\log k\},&\quad \alpha\in (0,1),\\
\max\{k^{-2} \log ^2 k , k^{-d}\log k\},&\quad \alpha=1.\end{cases}\end{equation*} This along with \eqref{a31}
and the argument for the case that $\alpha\in (1,2)$ yields the assertion for $\alpha=1$ in \eqref{t3-1-1-1} and that, for $\alpha\in (0,1)$, there are
constants
$c_{41}>0$ and $k_0(\w)\ge1$ so that
\begin{equation}\label{t3-1-11}
\|u_k-\bar u\|_{L^2(\R^d;dx)}\le c_{41}k^{-\alpha/2}
\log^{\frac{\alpha(1+\gamma)}{4(d-\alpha)\wedge (2d)}}k,
\quad  k>k_0(\w).
\end{equation}

\medskip

Now, we turn to the remaining estimate for $\alpha\in (0,1)$ in \eqref{t3-1-1-1}.
For any $x\in k^{-1}\Z^d$,
\begin{equation}\label{e:proof1}
\begin{split}
\lambda(u_k(x)-\bar u(x))-\sL^{k}(u_k-\bar u)(x)
&= (\lambda u_k (x)- \sL^{k}u_k(x))-(\lambda \bar u(x)- \bar \sL \bar u(x))-\bar \sL \bar u(x)+ \sL^{k} \bar u(x)\\
&=-\bar \sL \bar u(x)+ \sL^{k} \bar u(x)=:J_1^k(x).
\end{split}
\end{equation}
According to
Proposition \ref{l3-1-1-1} and \eqref{l3-1-2},
there is a random variable $k_1(\w)\ge1$ such that for all $k> k_1(\w)$,
$$\int_{k^{-1}\Z^d} |J_1^k(x)|^2\,\mu^{k}(dx)\le c_{42} \max\{k^{-2(1-\alpha)},k^{-d}\}\log k.$$

Furthermore, multiplying
$u_k-\bar u$
in both sides of the equality \eqref{e:proof1} and taking integral with respect to
$\mu^{k}$, we can derive that for all $k>k_1(\w)$,
\begin{align*}
 \lambda\|u_k-\bar u\|^2_{L^2(k^{-1}\Z^d;\mu^{k})}
&\le  \lambda\|u_k-\bar u\|^2_{L^2(k^{-1}\Z^d;\mu^{k})}+
\E^{k,\w}(u_k-\bar u, u_k-\bar u)\\
&\le \int_{k^{-1}\Z^d} |J_1^k(x)||u_k(x)-\bar u(x)| \,\mu^{k}(dx)\\
&\le \left(\int_{k^{-1}\Z^d} |J_1^k(x)|^2\,\mu^{k}(dx)\right)^{1/2}\|u_k-\bar u\|_{L^2(k^{-1}\Z^d;\mu^{k})}\\
&\le  c_{43}\left(\max\{k^{-2(1-\alpha)},k^{-d}\}\log k \right)^{1/2}\|u_k-\bar u\|_{L^2(k^{-1}\Z^d;\mu^{k})},\end{align*} which proves that
$$\|u_k-\bar u\|_{L^2(k^{-1}\Z^d;\mu^{k})}\le c_{44}\max\{k^{-(1-\alpha)},k^{-d/2}\}\log^{1/2}k.$$
Therefore, for all $k>k_1(\w)$,
\begin{equation*}
\begin{split}
 \|u_k-\bar u\|^2_{L^2(\R^d;dx)}
&=\sum_{z\in k^{-1}\Z^d}\int_{\prod_{1\le i\le d}(z_i,z_i+k^{-1}]}|u_k(x)-\bar u(x)|^2\,dx\\
&\le 2\sum_{z\in k^{-1}\Z^d}\int_{\prod_{1\le i\le d}(z_i,z_i+k^{-1}]}\left(|u_k(x)-\bar u(z)|^2+
|\bar u(x)-\bar u(z)|^2\right)\,dx\\
&=2\|u_k-\bar u\|^2_{L^2(k^{-1}\Z^d;\mu^{k})}+2\sum_{z\in k^{-1}\Z^d}\int_{\prod_{1\le i\le d}(z_i,z_i+k^{-1}]}
|\bar u(x)-\bar u(z)|^2\,dx\\
&\le 2\|u_k-\bar u\|^2_{L^2(k^{-1}\Z^d;\mu^{k})}+c_{45}k^{-2}\le c_{46} \max\{k^{-2(1-\alpha)},k^{-d}\}\log k,
\end{split}
\end{equation*}
where in the second inequality we used \eqref{e:pppqqq}. Combining this with \eqref{t3-1-11}, we obtain
the desired conclusion \eqref{t3-1-1-1} for $\alpha\in (0,1)$.
\end{proof}

\appendix
\section{Dense property of  $\mathscr{S}_0^\lambda$}\label{appendix1}

Recall that for any $\lambda>0$,
\begin{equation}\label{e:S}
\mathscr{S}_0^\lambda := \left\{f:  f= (\lambda - \bar \sL) g \hbox{ for some } g\in   C_c^\infty(\R^d) \right\},
\end{equation}
where $\bar \sL$ is defined by \eqref{e:1.2}.
\begin{lemma}\label{l5-1}
$\mathscr{S}_0^\lambda \subset C_b(\R^d)\cap L^2(\R^d;dx)$ and is dense in $L^2(\R^d;dx)$.
\end{lemma}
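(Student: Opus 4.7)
My plan is to handle the inclusion by a direct estimate on $\bar\sL g$ and then prove density by a Fourier-analytic duality argument, exploiting that $\bar\sL$ is, up to a positive multiplicative constant, the classical fractional Laplacian $-(-\Delta)^{\alpha/2}$. For the inclusion $\mathscr{S}_0^\lambda\subset C_b(\R^d)\cap L^2(\R^d;dx)$, the only nontrivial point is that $\bar\sL g\in C_b\cap L^2$ whenever $g\in C_c^\infty(\R^d)$. Fixing $R>0$ with $\operatorname{supp} g\subset B_R$, a Taylor expansion of $g$ inside the integrand of \eqref{e:1.2} (treated separately in each of the three cases $\alpha\in(0,1)$, $\alpha=1$, $\alpha\in(1,2)$) bounds $|\bar\sL g(x)|$ uniformly on $B_{2R}$ by a constant depending on $\|g\|_\infty$ and $\|\nabla^2 g\|_\infty$. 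For $|x|>2R$ one has $g(x)=\nabla g(x)=0$, so the integral collapses to $\int_{B_R}g(y)|x-y|^{-d-\alpha}\,dy$, giving $|\bar\sL g(x)|\le C_g(1+|x|)^{-d-\alpha}$, which lies in $L^2(\R^d;dx)$ since $2(d+\alpha)>d$; continuity follows from dominated convergence.

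For density I would use duality. Assume $f\in L^2(\R^d;dx)$ is orthogonal to $\mathscr{S}_0^\lambda$, i.e.\ $\int_{\R^d}f(x)\,\overline{(\lambda-\bar\sL)g(x)}\,dx=0$ for every $g\in C_c^\infty$. Since $\bar\sL$ is a positive multiple of $-(-\Delta)^{\alpha/2}$, one has the Fourier multiplier identity $\widehat{\bar\sL g}(\xi)=-c_\alpha|\xi|^\alpha\hat g(\xi)$ for some $c_\alpha>0$, so Plancherel (applicable because both $f$ and $(\lambda-\bar\sL)g$ lie in $L^2$ by the first paragraph) recasts the orthogonality as
\begin{equation*}
\int_{\R^d}\hat f(\xi)\,\overline{m(\xi)\,\hat g(\xi)}\,d\xi=0\qquad\text{for every }g\in C_c^\infty(\R^d),
\end{equation*}
where $m(\xi):=\lambda+c_\alpha|\xi|^\alpha$. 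Since $m$ has polynomial growth, $T:\varphi\mapsto\int_{\R^d}\hat f(\xi)m(\xi)\varphi(\xi)\,d\xi$ is a well-defined tempered distribution (finite on $\varphi\in\mathcal S$ because $m\varphi\in L^2$ and $\hat f\in L^2$), and the identity above says $\langle T,\overline{\hat g}\rangle=0$ for all $g\in C_c^\infty$; taking the inverse Fourier transform shows $\mathcal F^{-1}T$ annihilates every $C_c^\infty$ function, and hence $T=m\hat f=0$ in $\mathcal S'(\R^d)$.

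To deduce $\hat f=0$ from the distributional identity $m\hat f=0$, I would test against arbitrary $\psi\in C_c^\infty(\R^d\setminus\{0\})$: since $m$ is smooth and bounded below by $\lambda$ away from the origin, $\psi/m$ again lies in $C_c^\infty(\R^d\setminus\{0\})\subset\mathcal S(\R^d)$, so
\begin{equation*}
0=\langle m\hat f,\,\psi/m\rangle_{\mathcal S',\mathcal S}=\int_{\R^d}\hat f(\xi)\psi(\xi)\,d\xi.
\end{equation*}
Therefore $\hat f=0$ almost everywhere on $\R^d\setminus\{0\}$, hence a.e.\ on $\R^d$, and $f=0$; this gives $(\mathscr{S}_0^\lambda)^\perp=\{0\}$, proving density. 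The principal technical point to verify carefully, and the step I expect to be most delicate, is the Fourier multiplier identity $\widehat{\bar\sL g}=-c_\alpha|\xi|^\alpha\hat g$ for $g\in C_c^\infty$ together with the distributional self-adjointness used in the Plancherel step; once these are in hand, restricting test functions to $\R^d\setminus\{0\}$ cleanly bypasses the lack of smoothness of $|\xi|^\alpha$ at the origin for $\alpha\in(0,2)$.
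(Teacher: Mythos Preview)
Your proof of the inclusion $\mathscr{S}_0^\lambda\subset C_b(\R^d)\cap L^2(\R^d;dx)$ is essentially identical to the paper's: both split into a near-field bound on $B_{2R}$ via Taylor expansion and a far-field decay $|\bar\sL g(x)|\lesssim(1+|x|)^{-d-\alpha}$ coming from the collapse of the integrand when $g$ and $\nabla g$ vanish at $x$.

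For density, your route is genuinely different. The paper argues constructively: given $f\in C_c^\infty(\R^d)$, it sets $g_n(x)=\varphi(|x|-n)\,\bar G_\lambda f(x)\in C_c^\infty(\R^d)$ with a smooth cutoff $\varphi$, so that $f_n:=(\lambda-\bar\sL)g_n\in\mathscr{S}_0^\lambda$, and then shows $\|f_n-f\|_{L^2}\to0$ by dominated convergence, using the decay of $\bar G_\lambda f$ and its derivatives. You instead use duality and the Fourier multiplier structure: orthogonality to $\mathscr{S}_0^\lambda$ becomes $m\hat f=0$ in $\mathcal S'$ with $m(\xi)=\lambda+c_\alpha|\xi|^\alpha$, and strict positivity of $m$ forces $\hat f=0$. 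Your argument is shorter and exploits that $\bar\sL$ is an explicit Fourier multiplier; the paper's approach is more hands-on and would transfer to operators for which one only has resolvent kernel estimates rather than a symbol. Two small comments on your write-up: (i) the step ``taking the inverse Fourier transform shows $\mathcal F^{-1}T$ annihilates every $C_c^\infty$ function'' is cleaner if you simply note that $\{\overline{\hat g}:g\in C_c^\infty\}=\{\hat g:g\in C_c^\infty\}$ is dense in $\mathcal S$, so the tempered distribution $T$ vanishes; (ii) since $m\ge\lambda>0$ everywhere (not just away from the origin) and $m\hat f\in L^1_{\rm loc}$, you can divide by $m$ directly without restricting test functions to $\R^d\setminus\{0\}$---your detour through $C_c^\infty(\R^d\setminus\{0\})$ is correct but unnecessary.
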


\begin{proof}
First note that $\bar \sL$ is a constant multiple of the fractional Laplacian $\Delta^{\alpha}$ so its $\lambda$-resolvent $\bar G_\lambda:= (\lambda - \bar \sL)^{-1}$
has a symmetric kernel $\bar G_\lambda (z)$ so that $\bar G_\lambda f(x)= \int_{\R^d} \bar G_\lambda (x-y) f(y) \,dy= \int_{\R^d} \bar G_\lambda (y) f(x-y) \,dy$ for any $f\in L^2(\R^d; dx)$.
Thus $\bar G_\lambda f \in C^\infty_b (\R^d)$ for any $f\in C^\infty_c (\R^d)$ with
$\nabla^k \bar G_\lambda f  = \bar G_\lambda (\nabla^k f)$
for  $k\in \N$.
For any $g\in C^2_b(\R^d)$, since
\begin{equation}\label{e:A1}
\bar \sL g(x) =  \int_{ \R^d }  \left(g(y) -g(x) -\I_{\{|y-x|\leq 1\}} \nabla g (x) \cdot   |y-x| \right) \frac1{|x-y|^{d+\alpha}} \,dy \quad \hbox{for } x\in \R^d,
\end{equation}
it holds that $\bar \sL g \in C_b (\R^d)$
and  there is a constant $c_0 >0$ independent of $g$
such that
\begin{equation}\label{e:A2}
 |\bar \sL g(x)|\le c_0  (
 \| \nabla^2 g \|_\infty
 + \| g\|_\infty) \quad \hbox{for every } x\in \R^d.
 \end{equation}
 For $g\in C^2_c (\R^d)$, let
 $R>1$ so that ${\rm supp} [g] \subset B_R$.
  We have by \eqref{e:A1}-\eqref{e:A2} that there are constants $c_1,  c_2>0$ so that
$$
|(\lambda -\bar \sL ) g (x) |\le  \lambda |g (x)|+ |\bar \sL g   (x) |
\le   c_1\I_{B_{2R}}(x)+ \frac{c_2}{(1+|x|)^{d+\alpha}}\I_{B_{2R}^c}(x) \quad \hbox{for } x\in \R^d.
$$
Thus $(\lambda -\bar \sL ) g  \in C_b(\R^d) \cap L^2(\R^d; \mu)$, proving that $\mathscr{S}_0^\lambda \subset C_b(\R^d)\cap L^2(\R^d;dx)$.

Next we show that $\mathscr{S}_0^\lambda$ is dense in $L^2(\R^d; \mu)$.
 Let $\varphi\in C_b^\infty(\R)$ be such that $0\le \varphi \le 1$ on $\R$ with  $\varphi(r)=1$ for
 $r\le 1$ and $\varphi(r)=0$ for $r\ge 2$.
    For any $f\in C^\infty_c(\R^d)$ and $n\ge1$,  define
$ g_n(x):=  \varphi (|x |-n  ) \bar G_\lambda f (x)$,
 which is  in $C_c^\infty (\R^d)$.
  Let    $f_n:= (\lambda -\bar \sL) g_n  \in \mathscr{S}_0^\lambda$.
We  have
 \begin{align*}
\lim_{n \to \infty}\|f_n-f\|_{L^2(\R^d;dx)}
&=\lim_{n \to \infty}\|(\lambda-\bar \sL)
((\varphi(|\cdot| -n )-1)\bar G_\lambda f(\cdot))\|_{L^2(\R^d;dx)}\\
&\le \lambda \lim_{n \to \infty}\|
((\varphi(|\cdot| -n )-1) \bar G_\lambda f(\cdot))\|_{L^2(\R^d;dx)}\\
&\quad+ \lim_{n \to \infty}\|\bar \sL
((\varphi(|\cdot| -n )-1)
\bar G_\lambda f(\cdot))
\|_{L^2(\R^d;dx)}\\
&=0,
\end{align*}
where in the last  equality we used the dominated convergence theorem, and the facts that $\bar G_\lambda f \in L^2(\R^d; \mu)$,
\eqref{e:A2}
and the bound
$$|(\varphi(|x| -n)-1)\bar G_\lambda f(x)| + |\nabla^2((\varphi(|x| -n )-1)\bar G_\lambda f(x)) | \le \frac{c_3}{(1+|x|)^{d+\alpha}}\I_{\{|x|\ge n\}} \quad
\hbox{on } \R^d.
$$
Thus we have shown that any $f\in C^\infty_c(\R^d)$ can be $L^2(\R^d; dx)$-approximated by functions in $\mathscr{S}_0^\lambda$.
As $C^\infty_c(\R^d)$ is dense in $L^2(\R^d; dx)$,  this  establishes that  $\mathscr{S}_0^\lambda$ is dense in $L^2(\R^d;dx)$.
\end{proof}

\ \

\noindent {\bf Acknowledgements.}\,\,  The research of Xin Chen is supported by National Natural Science Foundation of China
(Nos. 12122111).
The research of Zhen-Qing Chen is partially supported by  a Simons Foundation fund.
The research of Takashi Kumagai is supported by JSPS
KAKENHI Grant Number
JP22H00099.
The research
of Jian Wang is supported by the National Key R\&D Program of China (2022YFA1000033) and the National Natural Science Foundation of China (Nos. 11831014, 12071076 and 12225104).

\end{document}